  \newcommand{\R}{\ensuremath{\mathbb{R}}}%
  \newcommand{\Z}{\ensuremath{\mathbb{Z}}}%
  \newcommand{\N}{\ensuremath{\mathbb{N}}}%
                \renewcommand{\P}{\ensuremath{\mathcal{P}}}%
                \renewcommand{\Pi}{\ensuremath{\mathcal{P}_\perp}}%
        \renewcommand{\H}{\ensuremath{\mathcal{H}}}%
  \newcommand{\Sym}{\ensuremath{\operatorname{Sym}}}%
    \newcommand{\sym}{\ensuremath{\operatorname{Sym}}}%
      \newcommand{\Alt}{\ensuremath{\operatorname{Alt}}}%
    \newcommand{\alt}{\ensuremath{\operatorname{Alt}}}%
    \newcommand{\Att}{\ensuremath{\operatorname{Att}}}%
    \newcommand{\Per}{\ensuremath{\operatorname{Per}}}%
    \newcommand{\Rep}{\ensuremath{\operatorname{Rep}}}%
  \newcommand{\supp}{\ensuremath{\operatorname{Supp}}}%
	\newcommand{\fix}{\ensuremath{\operatorname{Fix}}}%
    \newcommand{\acts}{\ensuremath{\curvearrowright}}%
  \newcommand{\sub}{\ensuremath{\operatorname{Sub}}}%
  \newcommand{\homeo}{\ensuremath{\operatorname{Homeo}}}%
    \newcommand{\rist}{\ensuremath{\operatorname{R}}}%
			\newcommand{\Tsf}{\mathsf{T}}
		\newcommand{\Hsf}{\mathsf{H}}
				\newcommand{\PL}{\mathsf{PL}}
\theoremstyle{definition}
  \newtheorem{defin}{Definition}[section]
\theoremstyle{plain}
  \newtheorem{thm}[defin]{Theorem}
  \newtheorem{main thm}{Theorem}
  \newtheorem{prop}[defin]{Proposition}
    \newtheorem{prop-def}[defin]{Proposition-Definition}
    \newtheorem{thmintro}{Theorem}
    \newtheorem{corintro}[thmintro]{Corollary}
  \newtheorem{cor}[defin]{Corollary}
   \newtheorem{lem}[defin]{Lemma}
\theoremstyle{remark}
  \newtheorem{remark}[defin]{Remark}
\begin{document}

  \date{November 19, 2021}	
\title{Confined subgroups and high transitivity}

\author{Adrien Le Boudec}
\address{CNRS, UMPA - ENS Lyon, 46 all\'ee d'Italie, 69364 Lyon, France}
\email{adrien.le-boudec@ens-lyon.fr}

\author{Nicol\'as Matte Bon}
\address{
	CNRS,
	Institut Camille Jordan (ICJ, UMR CNRS 5208),
	Universit\'e de Lyon,
	43 blvd.\ du 11 novembre 1918,	69622 Villeurbanne,	France}

\email{mattebon@math.univ-lyon1.fr}

\thanks{ALB supported by ANR-19-CE40-0008-01 AODynG}

\maketitle

\begin{abstract}
An action of a group $G$ is highly transitive if $G$ acts transitively on $k$-tuples of distinct points for all $k \geq 1$. Many examples of groups with a rich geometric or dynamical action admit highly transitive actions. We prove that if a group $G$ admits a highly transitive action such that $G$ does not contain the subgroup of finitary alternating permutations, and if $H$ is a confined subgroup of $G$, then the action of $H$ remains highly transitive, possibly after discarding finitely many points.

This result provides a tool to rule out the existence of highly transitive actions, and to classify highly transitive actions of a given group. We give concrete illustrations of these applications in the realm of groups of dynamical origin. In particular we obtain the first non-trivial classification of highly transitive actions of a finitely generated group. 
\end{abstract}


\section{Introduction}

Given a group $G$, we let $\sub(G)$ be the space of subgroups of $G$, endowed with the induced topology from the set $2^G$ of all subsets of $G$. The group $G$ acts on $\sub(G)$ by conjugation. A subgroup $H$ of $G$ is called a \textbf{confined subgroup} if the closure of the $G$-orbit of $H$ in $\sub(G)$ does not contain the trivial subgroup. Equivalently, $H$ is a confined subgroup if there exists a finite subset $P$ of non-trivial elements of $G$ such that $g H g^{-1} \cap P$ is not empty for every $g \in G$. A subgroup $H$ of $G$ is a  uniformly recurrent subgroup (URS)  if the orbit closure of $H$ is minimal \cite{GW-urs}, i.e.\ does not contain any proper closed $G$-invariant subspace. These two notions are closely related, as every non-trivial subgroup $H$ that is a URS is confined; and conversely if $H$ is confined, then the orbit closure of $H$ contains a non-trivial URS. 

An early appearance of the notion of confined subgroups can be traced back to the study of ideals in group algebras of locally finite groups, notably in \cite{Se-Za-alt,Zaless94,Hart-Zal-97}. In particular the terminology was introduced in \cite{Hart-Zal-97}. Confined subgroups and URSs are intimately connected to stabilizers of group actions on compact spaces. In this context they have been implicitly considered in the literature under different names, see \cite{Grig-Nek-Sus,Grig-survey2011}.  Recently these notions turned out to be useful tools to understand reduced group $C^*$-algebras, thanks to a connection with topological boundary actions \cite{Kal-Ken,Ken-URS,BKKO}. The systematic study of confined subgroups and URSs of various countable groups in \cite{LBMB-sub-dyn,LB-ame-urs-latt,MB-graph-germs,LBMB-comm-lemma} has led to several applications and rigidity results.

 In this article we establish a connection between confined subgroups and highly transitive actions. Throughout the paper  $\Omega$ will always be an infinite set, and we denote by $\Sym(\Omega)$ the group of all permutations of $\Omega$. An action of a group $G$ on $\Omega$ is $k$-transitive if $G$ acts transitively on the set of $k$-tuples of distinct elements of $\Omega$. An action is \textbf{highly transitive} if it is $k$-transitive for all $k$. Equivalently, the action of $G$ on $\Omega$ is highly transitive if the induced group homomorphism from $G$ to $\Sym(\Omega)$ has a dense image, where $\Sym(\Omega)$ is endowed with the pointwise convergence topology.

An example of a highly transitive action is for instance given by the action of the group $\Sym_f(\Omega)$ of finitary permutations of $\Omega$, or even its subgroup $\Alt_f(\Omega)$ of alternating permutations. Here a permutation is termed finitary if it moves only finitely many points. Recall that $\Alt_f(\Omega)$ and $\Sym_f(\Omega)$ are normal subgroups of $\Sym(\Omega)$, and every non-trivial normal subgroup of $\Sym(\Omega)$ contains $\Alt_f(\Omega)$ \cite{Onofri,Sch-Ul}.  For a group $G$ acting faithfully and highly transitively on $\Omega$, one easily verifies that the image of $G$ in $\Sym(\Omega)$ contains a non-trivial finitary permutation if and only if it contains $\Alt_f(\Omega)$. In this situation we will say that $G$ is \textbf{partially finitary}. This can be reformulated as an intrinsic property of the group, see \S  \ref{subsec-finitary}.  These are somehow considered as trivial examples, and we are mostly interested in highly transitive actions of groups that are not partially finitary. 

The first goal of this article is to prove the following result, which asserts that high transitivity is essentially inherited by confined subgroups. Whenever $H$ is a subgroup of $\Sym(\Omega)$, we denote by $\Omega_{H,f}$ the union  of the finite $H$-orbits in $\Omega$.

\begin{thmintro} \label{thm-intro-main}
	Suppose that a group $G$ admits a faithful and highly transitive action on $\Omega$ and that $G$ is not partially finitary. If $H$ is a confined subgroup of $G$, then  $\Omega_{H,f}$ is finite, and the action of $H$ on $\Omega \setminus \Omega_{H,f}$ is highly transitive.
\end{thmintro}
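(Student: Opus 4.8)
My plan is to replace $H$ by the closure $M=\overline{H}$ of its image in $\Sym(\Omega)$ and to deduce everything from a structural fact about this closed subgroup. Since $\Omega$ is discrete, a pointwise limit of elements of $H$ agrees with some element of $H$ on any given point, so $H$ and $M$ have exactly the same orbits; in particular $\Omega_{H,f}=\Omega_{M,f}$. Moreover, writing $\Omega'=\Omega\setminus\Omega_{M,f}$ (an $M$-invariant set) and letting $\pi$ denote restriction to $\Omega'$, continuity of $\pi$ gives $\pi(M)\subseteq\overline{\pi(H)}$; hence, once I know that $M$ has finitely many finite orbits and that $\pi(M)\supseteq\Alt_f(\Omega')$, the density of $\Alt_f(\Omega')$ in $\Sym(\Omega')$ forces $\overline{\pi(H)}=\Sym(\Omega')$, i.e.\ the action of $H$ on $\Omega'$ is highly transitive. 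So it suffices to prove these two facts about $M$.

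I would first record two inputs. Because $G$ is not partially finitary, its faithful image contains no non-trivial finitary permutation, so every non-trivial element of $G$ — in particular each confinement witness $p_i$ — has infinite support. Next, confinement transfers from $H$ to $M$: given $\sigma\in\Sym(\Omega)$, approximate it by $g_m\in G$ using high transitivity of $G$; confinement provides $g_m^{-1}p_{i(m)}g_m\in H\subseteq M$, and on a subsequence where $i(m)$ is constant, closedness of $M$ yields $\sigma^{-1}p_i\sigma\in M$ for some $i$. Thus $\Sym(\Omega)=\bigcup_i B_i$ with $B_i=\{\sigma:\sigma^{-1}p_i\sigma\in M\}$ closed. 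Since $\Sym(\Omega)$ is a Baire space, some $B_{i_0}$ has non-empty interior, so there is a finite partial injection $\phi_0$ with finite domain $A$ such that $\sigma^{-1}p\sigma\in M$ for \emph{every} $\sigma$ extending $\phi_0$, where $p:=p_{i_0}$ is infinitary.

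This last statement is the engine of the argument. Since $\supp(p)$ is infinite and $\sigma$ may be prescribed arbitrarily off $A$, any finite partial injection $\theta$ of $\Omega\setminus A$ with disjoint domain and range can be realized by some $\sigma^{-1}p\sigma\in M$: one routes each required move $x\mapsto\theta(x)$ through a distinct edge $u\mapsto p(u)$ of $p$, and an infinitary permutation has infinitely many pairwise disjoint edges. In particular $M$ is transitive on $\Omega\setminus A$, which forces $\Omega_{M,f}\subseteq A$ to be finite and $\Omega'$ to be a single $M$-orbit. The same realizability of all disjoint patterns shows that $M$ acts primitively on $\Omega'$: if $x,x'$ lay in one block and $y,y'$ in two distinct blocks, with $x,x',y,y'$ distinct in $\Omega\setminus A$, then an element of $M$ sending $x\mapsto y,\ x'\mapsto y'$ would merge the two blocks, a contradiction. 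Finally I produce a non-trivial finitary element of $M$ supported in $\Omega'$: fix $\sigma_1$ extending $\phi_0$ and a transposition $c=(s\ t)$ with $s,t\in\Omega'\setminus A$ and $c$ trivial on $A$; then $q_1:=\sigma_1^{-1}p\sigma_1$ and $q_2:=(\sigma_1 c)^{-1}p(\sigma_1 c)=c^{-1}q_1c$ both lie in $M$, and $q_1q_2^{-1}=q_1cq_1^{-1}c=(q_1(s)\ q_1(t))(s\ t)$ is a non-trivial finitary element, which for a generic choice is supported in $\Omega'$. With primitivity and this finitary element in hand, Wielandt's theorem (a primitive permutation group on an infinite set containing a non-trivial finitary permutation contains the finitary alternating group) gives $\pi(M)\supseteq\Alt_f(\Omega')$, completing the reduction.

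I expect the main obstacle to be the combinatorial realizability step together with primitivity: turning the abstract freedom of $\sigma$ off $A$ into the explicit constructions above, and in particular ruling out all non-trivial invariant partitions — the delicate point being the corner cases where a hypothetical block system interacts with the finite exceptional set $A$, which requires choosing the four test points carefully inside $\Omega\setminus A$ (using that at least one block meets $\Omega\setminus A$ in at least two points and that at least two blocks meet $\Omega\setminus A$). The passage to the closure, the transfer of confinement, and the Baire-category extraction of $\phi_0$ are comparatively routine once set up.
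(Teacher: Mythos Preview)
Your argument is correct and genuinely different from the paper's. Both proofs begin the same way --- pass to the closure $M=\overline{H}\le\Sym(\Omega)$, observe that $H$ and $M$ share orbits, and transfer confinement to $M$ (this is the paper's Lemma~\ref{lem-conf-dense-embed}) --- and both finish via Jordan--Wielandt. The divergence is in the middle: the paper proves a structural theorem for confined subgroups of $\Sym(\Omega)$ (Theorem~\ref{thm-confi-SYM} and Proposition~\ref{prop-confsym-precise}) by invoking the \emph{commutator lemma for confined subgroups} from \cite{LBMB-comm-lemma} together with a combinatorial analysis of displacement configurations (Lemmas~\ref{lem-conf-ht} and~\ref{lem-displconf-perm}). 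Your route avoids that machinery entirely: the observation that $\Sym(\Omega)=\bigcup_i B_i$ with each $B_i$ closed and the $B_i$ finite in number yields directly (no genuine Baire category is needed --- a non-empty space is never a finite union of closed nowhere dense sets) a single infinitary $p$ and a finite exceptional set $A$ such that $\sigma^{-1}p\sigma\in M$ whenever $\sigma$ extends $\phi_0$. From this you extract transitivity, primitivity, and a finitary witness by explicit hand-built permutations, which is more elementary and self-contained. What the paper's approach buys in exchange is quantitative control: Proposition~\ref{prop-confsym-precise} and Theorem~\ref{thm-conf-ht-precise} give the sharp bound $|\Omega_{H,f}|\le r-1$ (and the general description for partially finitary $G$), whereas your $A$ comes from an interior point of some $B_{i_0}$ and carries no a~priori size bound. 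For the qualitative statement of Theorem~\ref{thm-intro-main} itself, your argument is a clean alternative.
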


\begin{remark}
In the setting of the theorem, the set $\Omega_{H,f}$ is not empty in general, for instance because the stabilizer in $G$ of a point or of a finite subset of $\Omega$ may very well be a confined subgroup of $G$. See e.g. Remark \ref{remark-fullgp} or Section \ref{sec-classification}.
\end{remark}

\begin{remark}
In the case where $G$ is partially finitary, a similar conclusion holds, up to passing to a finite $H$-invariant partition of $\Omega \setminus \Omega_{H,f}$; see Theorem \ref{thm-conf-ht-precise}. 
\end{remark}

Recently there has been a growing interest for a better understanding of which (say countable) groups admit faithful highly transitive actions. Actually the question is already rich for primitive actions, and was investigated in depth by Gelander and Glasner \cite{Gel-Glas-prim}, partly motivated by earlier results of Margulis and Soifer \cite{Margulis-Soifer}. See \cite{GGS} for a recent survey. A series of consecutive works shows that various groups admitting a rich geometric or dynamical action also admit a highly transitive action. This is for instance the case for free groups \cite{McDonough-free,Dixon-free,Olshanskii-free,Eis-Glas,LeMaitre-FG}, surface groups \cite{Kitroser-surface}, hyperbolic groups \cite{Chaynikov}, outer automorphism groups of free groups \cite{Garion-Glasner}, acylindrically hyperbolic groups  \cite{Hull-Osin}, unbounded Zariski dense subgroups of $\mathrm{SL}_2$ over a local field \cite{GGS}, and many groups acting on trees \cite{FLMMS}. Hence Theorem \ref{thm-intro-main}  applies in all these situations.

On the other hand the study of the existence of highly transitive actions within a given class of groups still suffers from the lack of obstructions that prevent a group from having such actions. In other words, given a group $G$, we know very few restrictions on $G$ that are imposed by the existence of a faithful highly transitive action of $G$. See \S \ref{elementary-obs} for a brief discussion around this. The following consequence of Theorem \ref{thm-intro-main} provides a new criterion to rule out the existence of highly transitive actions.

\begin{corintro} \label{cor-intro-criteria}
Let $G$ be a group that is not partially finitary, and let $H$ be a confined subgroup of $G$. If $H$ does not admit any faithful highly transitive action, then neither does $G$.
\end{corintro}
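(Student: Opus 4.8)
The plan is to prove the contrapositive: assuming that $G$ admits a faithful highly transitive action, I will produce a faithful highly transitive action of $H$, contradicting the hypothesis on $H$. So fix a faithful highly transitive action of $G$ on some infinite set $\Omega$ and identify $G$ with its image in $\Sym(\Omega)$. Since $G$ is not partially finitary and $H$ is confined in $G$, Theorem \ref{thm-intro-main} applies directly and tells us that $\Omega_{H,f}$ is finite and that the action of $H$ on $\Omega' := \Omega \setminus \Omega_{H,f}$ is highly transitive. Because $\Omega$ is infinite while $\Omega_{H,f}$ is finite, the set $\Omega'$ is infinite, so this is a genuine highly transitive action of $H$ on an infinite set.

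The only point requiring care is faithfulness, since restricting the $H$-action from $\Omega$ down to $\Omega'$ could a priori create a kernel. Let $K \leq H$ denote the kernel of the action of $H$ on $\Omega'$. By definition every element of $K$ fixes $\Omega'$ pointwise, hence can move only points of the finite set $\Omega_{H,f}$; in other words, $K$ consists entirely of finitary permutations of $\Omega$.

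This is exactly where the hypothesis that $G$ is not partially finitary enters decisively. Since $G$ acts faithfully and highly transitively on $\Omega$, the fact that $G$ is not partially finitary means precisely that the image of $G$ in $\Sym(\Omega)$ contains no non-trivial finitary permutation (this is the characterization recorded just before the statement of Theorem \ref{thm-intro-main}). As $K \leq H \leq G$, we conclude that $K = \{1\}$. Therefore the action of $H$ on $\Omega'$ is both faithful and highly transitive, contradicting the assumption that $H$ admits no faithful highly transitive action; this establishes the contrapositive and hence the corollary.

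I do not expect any serious obstacle here: essentially the entire content is carried by Theorem \ref{thm-intro-main}, and the corollary is a short deduction from it. The single subtle step is the faithfulness issue discussed above, and its resolution is the observation that any kernel of the restricted action would necessarily be finitary, so that it is ruled out by the standing hypothesis that $G$ is not partially finitary.
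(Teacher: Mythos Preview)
Your proof is correct and matches the paper's approach. The paper does not give the corollary a separate proof, since it is an immediate consequence of Theorem~\ref{thm-intro-main}; the only nontrivial point is faithfulness, which the paper handles (in the last statement of Theorem~\ref{thm-conf-ht-precise}) by exactly the same observation you make---the kernel of the restricted action would consist of finitary permutations, hence is trivial because $G$ is not partially finitary.
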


We illustrate this criterion by providing an application to a class of groups of dynamical origin, namely subgroups  of the groups of piecewise linear homeomorphisms of suspension flows introduced in \cite{MB-T-flows}. We briefly recall this construction and refer to  \S \ref{s-PL-flows} for detailed definitions.
Let $X$ be a Stone space (i.e. a compact totally disconnected space), and $\varphi$ be a homeomorphism of $X$. The pair $(X, \varphi)$ will be called a \textbf{Stone system}. Let $Y^\varphi$ be the suspension space (or mapping torus) of $(X, \varphi)$, and $\Phi\colon \R\times Y^\varphi\to Y^\varphi$ be the associated suspension flow. The space $Y^\varphi$ is locally homeomorphic to $X\times \R$, and its path-connected components are exactly the $\Phi$-orbits. 
Following \cite{MB-T-flows}, we let $\PL(\varphi)$ be the group of all homeomorphisms of $Y^\varphi$ which preserve every $\Phi$-orbit and act as piecewise linear homeomorphism of $\R$ on each of them, and whose displacement along every $\Phi$-orbit is locally constant in the transverse direction. 

Recall that an action of a group $G$ on a compact space $Y$ by homeomorphisms is \textbf{topologically free} if  the set $\fix(g)$ of fixed points of $g$ has empty interior for every non-trivial element $g$ of $G$. Conversely the action is \textbf{topologically nowhere free} if every point $y$ in $Y$ is contained in the interior of $\fix(g)$ for some non-trivial element $g$ of $G$. This condition implies that  the stabiliser $G_y$ of every $y\in Y$ is a confined subgroup of $G$.  Using Corollary \ref{cor-intro-criteria}, we show the following result.

\begin{thmintro} \label{t-intro-PL}
	Let $(X, \varphi)$ be a minimal Stone system. Let $G$  be a subgroup of $\PL(\varphi)$ whose action on $Y^\varphi$ is topologically nowhere free.  Then $G$ does not admit any faithful highly transitive action. 
\end{thmintro}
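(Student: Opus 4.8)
The plan is to deduce the statement from Corollary \ref{cor-intro-criteria}. Since that criterion requires $G$ to be not partially finitary and to possess a confined subgroup admitting no faithful highly transitive action, the proof splits into three tasks: verifying that $G$ is not partially finitary, producing a confined subgroup, and analysing that subgroup. For the confined subgroup I would take a point stabiliser $G_y$, $y \in Y^\varphi$, which is confined precisely because the action of $G$ on $Y^\varphi$ is topologically nowhere free.

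First I would check that $G$ is not partially finitary. Every element of $\PL(\varphi)$ acts on each $\Phi$-orbit as an orientation-preserving piecewise linear homeomorphism of $\R$, and such a homeomorphism has finite order only if it is the identity; since the $\Phi$-orbits cover $Y^\varphi$, the group $\PL(\varphi)$, and hence $G$, is torsion-free. On the other hand a partially finitary group contains a copy of $\Alt_f(\Omega)$ (see \S\ref{subsec-finitary}) and therefore elements of finite order; thus $G$ cannot be partially finitary.

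The heart of the argument is to show that the confined subgroup $G_y$ admits no faithful highly transitive action. Here I would exploit the germ homomorphism at $y$. Since a piecewise linear homeomorphism fixing $y$ is affine on a one-sided neighbourhood of $y$ inside its $\Phi$-orbit, its one-sided germ is recorded by a single slope; taking the left and right slopes gives a homomorphism $\rho = (\gm, \gp)\colon G_y \to \R_{>0} \times \R_{>0}$ whose image is abelian. I would then invoke the elementary fact that any non-trivial normal subgroup $M \trianglelefteq G_y$ of a faithful highly transitive $G_y \acts \Omega'$ is again faithful and highly transitive, the closure of its image being a non-trivial closed normal subgroup of $\Sym(\Omega')$ and hence all of $\Sym(\Omega')$ by \cite{Onofri,Sch-Ul}. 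Combined with the fact that an abelian group has no faithful highly transitive action on an infinite set, this yields a dichotomy: if $\rho$ is injective then $G_y$ is abelian and we reach a contradiction, while if $N := \ker \rho$ is non-trivial then $N$ must itself be faithful and highly transitive.

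The main obstacle is therefore to rule out a faithful highly transitive action of the germ kernel $N$, which consists of the elements of $G$ that are the identity on a neighbourhood of $y$ in $Y^\varphi$. This is where the minimality of $(X,\varphi)$ and the structure of $\PL(\varphi)$ established in \cite{MB-T-flows} should enter: I would aim either to locate inside $N$ a non-trivial abelian normal subgroup of $G_y$ — contradicting high transitivity as above — or to bootstrap the confined-subgroup criterion, using that $N$ still contains non-trivial elements with $\supp$ in arbitrarily small open sets and that its own point stabilisers remain confined. Making this step precise, in particular extracting the abelian normal subgroup from the interplay between the flow, the transverse Stone structure, and the germ at $y$, is the part I expect to be delicate.
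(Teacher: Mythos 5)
Your reduction is sound as far as it goes: torsion-freeness of $\PL(\varphi)$ does rule out partial finitarity, point stabilizers are confined (Lemma \ref{lem-conf-stab}), the slope homomorphism $\rho\colon G_y \to \R_{>0}\times\R_{>0}$ is well defined (by the chart structure of $\PL(\varphi)$, its kernel is exactly the germ stabilizer $G_y^0$), and the dichotomy correctly reduces everything to showing that $N = G_y^0$ admits no faithful highly transitive action. (In fact the detour through $G_y$ and $\rho$ is unnecessary: $G_y^0$ is itself confined by Lemma \ref{lem-conf-stab}, so Corollary \ref{cor-intro-criteria} can be applied to it directly, which is what the paper does.) But the step you defer as ``delicate'' is not a technical finish --- it is the entire mathematical content of the theorem, and neither of your two proposed routes can supply it. Route (a), finding a non-trivial abelian normal subgroup of $N$, fails in the main examples: for $G = \Tsf(\varphi)$ the germ stabilizer $G_y^0$ is a directed union of products of copies of Thompson-like groups $F_{C,I}\cong F$, and such groups have no non-trivial abelian normal subgroups (every non-trivial normal subgroup of $F$ contains its simple derived subgroup $F'$). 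Route (b), ``bootstrapping'' the confined-subgroup criterion inside $N$, is an infinite regress: each application of Corollary \ref{cor-intro-criteria} only trades the high-transitivity question for the same question about a smaller confined subgroup, and without a terminating structural obstruction the argument never concludes.

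What actually closes the gap in the paper is a pair of results you do not anticipate. First, Lemma \ref{l-first-return}: by minimality of $(X,\varphi)$, a first-return-time decomposition of the complement of a chart around $y$ shows that every finite subset of $G_y^0$ lies in a subgroup isomorphic to $\PL(I_1)\times\cdots\times\PL(I_n)$, so every finitely generated subgroup of $G_y^0$ embeds in $\PL(I)$. Second, Proposition \ref{prop-metab-subq} and Corollary \ref{c-PL-subquotient}, a strengthening of the Brin--Squier theorem proved for this purpose: every finite subquotient of $\PL(I)$ is solvable. Since a group with a faithful highly transitive action has every finite symmetric group as a subquotient (\S \ref{elementary-obs}), and any finite subquotient of $G_y^0$ is already a subquotient of one of its finitely generated subgroups, $G_y^0$ admits no faithful highly transitive action, and Corollary \ref{cor-intro-criteria} finishes the proof. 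Note that this obstruction is genuinely about the germ stabilizer and not about $G$: the group $\Tsf(\varphi)$ itself contains free subgroups, so no subquotient restriction holds for $G$, which is why the passage to $G_y^0$ via minimality and first returns is indispensable.
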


Examples of groups to which the criterion applies are the groups $\Tsf(\varphi)\le \PL(\varphi)$ introduced and studied in \cite{MB-T-flows}, which are analogues in $\PL(\varphi)$ of Thompson's group $T$ acting on the circle. It is shown in  \cite{MB-T-flows} that the group $\Tsf(\varphi)$ is simple when the system $(X, \varphi)$ is minimal,  and finitely generated when $(X, \varphi)$ is conjugate to a subshift over a finite alphabet. In \S \ref{subsec-variant} we describe a related family of examples to which Theorem \ref{t-intro-PL} also applies, which retain similar properties, and for which we are additionally able to check that none of the previously known obstructions to high transitivity applies.

\bigskip

Beyond the problem of deciding which groups admit highly transitive actions, it is natural to try to classify all highly transitive actions of a given group. In the case of primitive actions, similar questions have been considered by Gelander--Glasner in \cite{Gel-Glas-prim,GelGlas-ONanScot} for countable linear groups. It follows from an old result of Jordan-Wielandt that the group $\alt_f(\Omega)$ admits a unique highly transitive action up to conjugacy, and more generally the same is true for every partially finitary group (see \S \ref{subsec-finitary}). Apart from this observation, there was so far no example of a finitely generated highly transitive group whose highly transitive actions are completely understood. Theorem \ref{thm-intro-main}  provides a tool towards such classification results. We illustrate this mechanism by considering the family of Higman--Thompson's groups $V_d$ \cite{Hig-fp}, whose definition is recalled in \S \ref{subsec-prelim-V}. It is well-known that for the natural action of $V_d$ on the Cantor set, the action on each orbit is highly transitive. Using Theorem \ref{thm-intro-main}, we prove the following:

\begin{thmintro} \label{thmintro-Vd}
Every faithful highly transitive action of the Higman--Thompson group $V_d$ is conjugate to its natural action on an orbit in the Cantor set.
\end{thmintro}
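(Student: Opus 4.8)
The plan is to locate, inside the given abstract action, a point whose stabiliser is forced to coincide with a point stabiliser of the natural action, and then to conclude by maximality. First I would check that $V_d$ is not partially finitary, so that Theorem~\ref{thm-intro-main} applies: the finitary alternating group $\Alt_f(\Omega)$ is locally finite, whereas every non-trivial normal subgroup of $V_d$ contains the commutator subgroup $[V_d,V_d]$, which is finitely generated, infinite and simple; since being partially finitary would produce a normal subgroup isomorphic to $\Alt_f(\Omega)$, this is impossible. Next I record that, because the action of $V_d$ on each orbit $\mathcal{O}\subseteq\mathfrak{C}$ in the Cantor set is highly transitive, hence primitive, the point stabiliser $V_d(x)=\{g:gx=x\}$ is a maximal subgroup for every $x\in\mathfrak{C}$. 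Consequently, if I can find $\omega\in\Omega$ and $x\in\mathfrak{C}$ with $V_d(x)\subseteq G_\omega$, then maximality of $V_d(x)$ together with $G_\omega\neq V_d$ forces $G_\omega=V_d(x)$, so that $\Omega=G/G_\omega$ is $V_d$-equivariantly isomorphic to the orbit of $x$ — exactly the desired conclusion. The whole problem thus reduces to showing that some point stabiliser $V_d(x)$ fixes a point of $\Omega$.

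To produce such a fixed point I would combine Theorem~\ref{thm-intro-main} with two structural facts about $V_d(x)$. The first is that $V_d(x)$ is a confined subgroup: choosing $g_1,\dots,g_d$ with $g_i$ acting trivially on the cylinder $[i]$ of points whose first letter is $i$, every stabiliser $V_d(y)$ contains the $g_i$ with $y\in[i]$, so the finite set $\{g_1,\dots,g_d\}$ meets every conjugate of $V_d(x)$. The second is that, when $x$ is not eventually periodic, the germ of $V_d$ at $x$ is trivial, so $V_d(x)=\varinjlim_n V_d[[x_1\cdots x_n]^c]$ is an increasing union of copies of $V_d$; since the inclusion of the rigid stabiliser of a clopen set into that of a strictly larger one kills the abelianisation (any element of the smaller one becomes a product of commutators once there is room to push its support off itself), this union is perfect, and a standard directed-union argument then shows that $V_d(x)$ is simple. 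Now Theorem~\ref{thm-intro-main} gives that $\Omega_{V_d(x),f}$ is finite and that $V_d(x)$ acts highly transitively on its complement, while simplicity forces the action of $V_d(x)$ on the finite set $\Omega_{V_d(x),f}$ to be trivial, i.e. $V_d(x)$ fixes it pointwise. Hence as soon as $\Omega_{V_d(x),f}\neq\emptyset$ we get $V_d(x)\subseteq G_\omega$ for any $\omega$ in it, and the previous paragraph concludes.

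The main obstacle is therefore to rule out $\Omega_{V_d(x),f}=\emptyset$ for every non-eventually-periodic $x$. By the double-coset symmetry between $G/V_d(x)=\mathcal{O}$ and $G/G_\omega=\Omega$, the number of $V_d(x)$-orbits on $\Omega$ equals the number of $G_\omega$-orbits on $\mathcal{O}$; since $V_d(x)$ has exactly one infinite orbit on $\Omega$ together with $|\Omega_{V_d(x),f}|$ fixed points, the vanishing of $\Omega_{V_d(x),f}$ for all such $x$ is equivalent to $G_\omega$ acting transitively — and, letting the number of fixed points grow, highly transitively — on the Cantor orbit $\mathcal{O}$. To exclude this I would exploit the dynamics on $\mathfrak{C}$: choosing a contraction $\tau\in V_d$ with $\tau(x)=x$ and $\tau([w])\subsetneq[w]$ for a neighbourhood $[w]$ of $x$ with $\bigcap_n\tau^n[w]=\{x\}$, the conjugates $A_n=\tau^n A\tau^{-n}$ of the rigid stabiliser $A=V_d[[w]\setminus\tau[w]]$ form an infinite family of pairwise commuting copies of $V_d$ inside $V_d(x)$ that accumulate at $x$ and are cyclically permuted by $\tau$. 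I expect the presence of infinitely many commuting non-trivial subgroups, shifted by a single element, to be incompatible with $V_d(x)$ acting highly transitively on $\Omega$ with no finite orbit, thereby forcing $\Omega_{V_d(x),f}\neq\emptyset$. Making this incompatibility precise — equivalently, proving that a point stabiliser of the abstract action cannot itself be highly transitive on the Cantor orbit — is the crux of the argument and the step I expect to require the most care.
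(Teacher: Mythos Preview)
Your reduction is correct and matches the paper's opening: point stabilisers $G_\xi$ are confined (in fact with a confining set of size $2$---any two non-trivial elements with disjoint support---which is sharper than the size-$d$ set you propose), Theorem~\ref{thm-intro-main} gives $|\Omega_{G_\xi,f}|\le 1$, and a single fixed point together with maximality of $G_\xi$ finishes the proof. So, exactly as you say, everything reduces to ruling out that every $G_\xi$ acts highly transitively on $\Omega$ with no finite orbit, and this is where your proposal is incomplete.

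Your proposed attack on this gap does not work as stated. The existence of infinitely many pairwise commuting copies of $V_d$ inside $G_\xi$, cyclically permuted by a contraction, is \emph{not} an obstruction to high transitivity: $V_d$ itself contains precisely such configurations (rigid stabilisers of disjoint annuli around any point, shifted by a local dilation) and nonetheless acts highly transitively on every Cantor orbit. Your double-coset remark is correct but only yields transitivity of $G_\omega$ on $\mathcal{O}$; jumping from $\Omega_{G_\xi,f}=\varnothing$ for single points $\xi$ to high transitivity of $G_\omega$ on $\mathcal{O}$ is unjustified. The simplicity of $G_\xi$ for aperiodic $\xi$, while plausible, is also not needed once one uses a size-$2$ confining set.

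The paper closes the gap by iterating Theorem~\ref{thm-intro-main}: assuming every $G_\xi$ is highly transitive on $\Omega$, one applies the theorem again to the two-point fixator $G_{\xi,\eta}$, which is confined in $G_\xi$ with a size-$2$ confining set. The case where $G_{\xi,\eta}$ has a fixed point in $\Omega$ is excluded by the elementary classification of proper overgroups of $G_{\xi,\eta}$ (Lemma~\ref{l-V-max}: they are just $G_\xi$, $G_\eta$, and the setwise stabiliser of $\{\xi,\eta\}$). Hence every $G^0_{\xi,\eta}$ is highly transitive on $\Omega$, which forces the pointwise fixator $G_\Sigma$ of any finite $\Sigma\subset\Omega$ to cover pairs of germs in $V_d$. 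The decisive dynamical input you are missing is Proposition~\ref{prop-germ-prox} (resting on Brin's description of individual elements of $V_d$, Proposition~\ref{p-brin}): it shows that such a $G_\Sigma$ acts minimally and extremely proximally on $X_d$. A short commutation trick then gives the contradiction: one takes $k$ with $k^2\neq 1$ supported in a proper clopen $U$, a point $\omega_1$ whose $k$-orbit has length $\ge 3$, and uses extreme proximality of a suitable $G_\Sigma$ to conjugate $k$ by an element that pushes $U$ off itself while fixing three prescribed points of $\Omega$, forcing two distinct points of $\Omega$ to coincide.
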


Here we focus on the family of groups $V_d$ instead of trying to reach the most optimal level of generality, but the argument is likely to be adaptable to other groups acting on compact spaces with suitable dynamical properties (see Section  \ref{sec-classification}, and in particular Remark \ref{rmq-general-Vd}).

\bigskip

To end this introduction, we point out that the analogue of Theorem \ref{thm-intro-main} is also true in the measured setting of  invariant random subgroups (IRSs), i.e. $G$-invariant Borel probability measures on $\sub(G)$ \cite{AGV}. That is, whenever $G$ is not partially finitary, high transitivity is inherited by all IRSs of $G$;  see Proposition  \ref{prop-IRS-ht}.  This fact is a rather direct application of the classical de Finetti theorem from probability theory, whose importance to the study of IRSs was recognised by Vershik \cite{Ver-IRS}. We point out that we are not aware of any application of this statement, for instance to  classify or to rule out the existence of highly transitive actions of certain groups. In the groups considered in Theorem \ref{t-intro-PL} and Theorem \ref{thmintro-Vd}, the conjugacy class closure in $\sub(G)$ of the confined subgroups that are involved does not carry any $G$-invariant probability measure. In other words, we deal with URSs that are not IRSs. Hence the above result for IRSs does not provide any help here, and the appropriate tool is indeed Theorem \ref{thm-intro-main}.

\subsection*{Outline}

The proof of Theorem \ref{thm-intro-main} is given in Section  \ref{sec-conf-sym}. The crucial point of the proof is Theorem \ref{thm-confi-SYM}, which provides a description of the confined subgroups of the symmetric group $\Sym(\Omega)$. The reason why this result is relevant regarding Theorem \ref{thm-intro-main} comes from the observation that confined subgroups behave well with respect to a dense embedding of a group $G$ into a topological group $L$: the image closure in $L$ of every confined subgroup of $G$ is a confined subgroup of $L$ (see Lemma  \ref{lem-conf-dense-embed}). Since a faithful highly transitive action of $G$ on $\Omega$ is the same as a dense embedding into $\Sym(\Omega)$, every confined subgroup of $G$ thus gives rise to a confined subgroup of $\Sym(\Omega)$. With Theorem \ref{thm-confi-SYM} and this observation in hand, the proof of Theorem \ref{thm-intro-main} follows easily. In the course of the proof of Theorem \ref{thm-confi-SYM}, we make use of the \textit{commutator lemma} for confined subgroups established in \cite{LBMB-comm-lemma}. Roughly speaking, this result says that when a group admits an action with non-trivial rigid stabilizers, then under certain assumptions every confined subgroup must contain a large part of some rigid stabilizer. We refer to Theorem \ref{thm-conf-abstract}  for a precise statement and for the terminology. As a remark, we note that it seems reasonable to believe that our mechanism of proof here with the help of Theorem \ref{thm-conf-abstract} might be applicable to study dense embeddings into other large topological groups.

Section \ref{sec-appl} is dedicated to Theorem \ref{t-intro-PL}. An ingredient of the proof is an auxiliary result about the subquotients of the group $\PL(I)$ of piecewise linear homeomorphisms of an interval, which  implies in particular that every finite subquotient of the group $\PL(I)$ must be solvable; see Proposition \ref{prop-metab-subq}. That result is an improvement of the Brin-Squier result \cite{Brin-Squier} asserting that $\PL(I)$ does not contain any non-abelian free subgroup, and provides an immediate obstruction to high transitivity for subgroups of $\PL(I)$ (see \S \ref{elementary-obs}). This conclusion does not generalize to the group  $\PL(\varphi)$ (which contains free subgroups). However, it does generalize to the stabilizers of the natural action of $\PL(\varphi)$ on $Y^\varphi$  (Lemma \ref{l-first-return}). Since the latter are confined subgroups, we deduce Theorem \ref{t-intro-PL} from Corollary~\ref{cor-intro-criteria}.

Section  \ref{sec-classification} contains the proof of Theorem \ref{thmintro-Vd}, and is independent of Section \ref{sec-appl}. The proof relies on an appropriate combination of Theorem \ref{thm-intro-main} together with an independent result about proximality of the action of certain subgroups of $V_d$ on the Cantor set (Proposition \ref{p-V-prox}). 

\subsection*{Acknowledgements}

We thank the referee for a careful reading of the article and for useful comments.

\section{Preliminaries} \label{sec-prelim}

\subsection{Notation for group actions}\label{s-notations-stab}

Suppose that a group $G$ acts on a set $Y$. We denote by $\fix(g)\subset Y$ the set of fixed points of an element $g\in G$, and define its support  as the set $\supp(g)=Y\setminus \fix(g)$. We use the notation $G_y$ for the stabiliser of a point $y\in Y$, and $G_{y_1,\ldots, y_n}$ for the pointwise fixator of an $n$-tuple of points.   If $Y$ has a topology, the \textbf{germ-stabilizer} of  $y$ is the subgroup $G^0_y$ consisting of elements that fix pointwise some neighbourhood of $y$. Similarly $G^0_{y_1,\cdots, y_n}$ is the subgroup of elements that fix pointwise a neighbourhood of each of the points $y_1,\ldots, y_n$.  

\subsection{Confined subgroups}

Let $H,G$ be two subgroups of a group $L$. We say that $H$ is \textbf{confined} by $G$ if there is a finite subset $P$ of non-trivial elements of $L$ such that $g H g^{-1} \cap P$ is not empty for every $g \in G$. Such a subset $P$  is called a \textbf{confining subset} for the pair $(H,G)$.  A \textbf{confined subgroup} of a group is a subgroup which is confined by the whole ambient group. Note that if $H$ contains a subgroup confined by a group $G$, then $H$ is also confined by $G$. 

Suppose that a group $G$ acts on a compact space $Y$. We say that the action is \textbf{topologically nowhere free} if $G^0_y\neq\{1\}$ for every $y\in Y$. The notions of confined subgroups and topologically nowhere free actions are related through the following lemma.

\begin{lem} \label{lem-conf-stab}
Suppose that the action of $G$ on the compact space $Y$ is topologically nowhere free. Then for every $y\in Y$, the germ stabilizer $G^0_y$ (and hence the stabilizer $G_y$) is a confined subgroup of $G$.
\end{lem}

\begin{proof}
For every $y$ one can find a non-trivial element of $G$ acting trivially on a neighbourhood of $y$. Hence by compactness one can find a finite subset $P$ of non-trivial elements such that the interiors of the set of fixed points of elements of $P$ cover $Y$, and it easily follows that $P$  is confining for the pair $(G^0_y,G)$ for every $y\in Y$.
\end{proof}

\subsection{Partially finitary groups} \label{subsec-finitary}

Recall from the introduction that we call a group $G$ partially finitary if $G$ admits an embedding into the symmetric group $\Sym(\Omega)$ over an infinite set such that the image of $G$ contains $\Alt_f(\Omega)$. 

\begin{lem}
	A group $G$ is partially finitary  if and only if there exists an infinite set $\Omega$ and an injective group homomorphism $\varphi: \Alt_f(\Omega) \to G$ such that the image of $\varphi$ is normal and has trivial centralizer in $G$.
\end{lem}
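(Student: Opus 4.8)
The plan is to pivot both implications through the conjugation action of $G$ on the normal copy of $\Alt_f(\Omega)$, using two facts about the finitary alternating group: that it has trivial centralizer in $\Sym(\Omega)$, and that every one of its automorphisms is realised by a conjugation in $\Sym(\Omega)$. First I would record the elementary centralizer computation $C_{\Sym(\Omega)}(\Alt_f(\Omega))=\{1\}$: if $\sigma\in\Sym(\Omega)$ commutes with $\Alt_f(\Omega)$ and $\sigma(a)=a'\neq a$ for some $a$, choose distinct $b,c\in\Omega\setminus\{a,a'\}$ (possible as $\Omega$ is infinite); then the $3$-cycle $g=(a\,b\,c)\in\Alt_f(\Omega)$ satisfies $\sigma g\sigma^{-1}=g$, so $\sigma(\supp(g))=\supp(g)=\{a,b,c\}$, contradicting $\sigma(a)=a'\notin\{a,b,c\}$. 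In particular $\Alt_f(\Omega)$ has trivial center, so conjugation gives an injective homomorphism $\Sym(\Omega)\to\aut(\Alt_f(\Omega))$; I would then invoke the classical fact (Schreier--Ulam, see \cite{Sch-Ul}) that for infinite $\Omega$ this map is an \emph{isomorphism}, i.e.\ every automorphism of $\Alt_f(\Omega)$ is conjugation by a unique element of $\Sym(\Omega)$.

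For the forward implication, assume $G$ is partially finitary, witnessed by an embedding $\psi\colon G\hookrightarrow\Sym(\Omega)$ with $\Alt_f(\Omega)\subseteq\psi(G)$, and set $\varphi=\psi^{-1}|_{\Alt_f(\Omega)}$, an injective homomorphism $\Alt_f(\Omega)\to G$. Since $\Alt_f(\Omega)\trianglelefteq\Sym(\Omega)$, for every $g\in G$ we have $\psi(g)\Alt_f(\Omega)\psi(g)^{-1}=\Alt_f(\Omega)$, and applying $\psi^{-1}$ shows $\varphi(\Alt_f(\Omega))\trianglelefteq G$. If $g$ centralizes $\varphi(\Alt_f(\Omega))$, then $\psi(g)$ centralizes $\Alt_f(\Omega)$ in $\Sym(\Omega)$, whence $\psi(g)=1$ and $g=1$ by the computation above; so the centralizer is trivial.

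For the converse, put $N=\varphi(\Alt_f(\Omega))$, which is normal in $G$ with $C_G(N)=\{1\}$. Conjugation defines $c\colon G\to\aut(N)$ with kernel $C_G(N)=\{1\}$, hence injective. Using $\varphi$ to identify $N\cong\Alt_f(\Omega)$ and the isomorphism $\aut(\Alt_f(\Omega))\cong\Sym(\Omega)$ above, I obtain an injective homomorphism $\psi\colon G\hookrightarrow\Sym(\Omega)$. It remains to see $\Alt_f(\Omega)\subseteq\psi(G)$: for $n\in N$ the automorphism $c(n)$ is the inner automorphism of $N$, which under the identification is conjugation by $\varphi^{-1}(n)\in\Alt_f(\Omega)$; by uniqueness (the triviality of the centralizer) $\psi(n)=\varphi^{-1}(n)$, so $\psi(N)=\Alt_f(\Omega)$. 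Thus $\Alt_f(\Omega)=\psi(N)\subseteq\psi(G)$, and $G$ is partially finitary.

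The one non-elementary ingredient, and the step I would treat most carefully, is the surjectivity in $\aut(\Alt_f(\Omega))\cong\Sym(\Omega)$: the converse direction genuinely needs that \emph{every} abstract automorphism of the finitary alternating group is induced by an honest permutation of $\Omega$. Everything else is bookkeeping around this structural theorem together with the elementary centralizer computation.
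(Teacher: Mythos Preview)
Your proof is correct and follows essentially the same route as the paper's: both reduce the converse to the structural identification $\aut(\Alt_f(\Omega))\cong\Sym(\Omega)$ via the conjugation action, and the forward direction is the elementary centralizer observation you spell out (the paper simply calls this direction ``clear''). The only quibble is your attribution: the result $\aut(\Alt_f(\Omega))\cong\Sym(\Omega)$ is not in the Schreier--Ulam paper \cite{Sch-Ul} (which concerns normal subgroups of $\Sym(\Omega)$); the paper cites \cite[Th.~8.2.A]{Dixon-Mortimer} for this fact.
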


\begin{proof}
	The direct implication is clear, and the converse follows from the fact that the conjugation action of $G$ on the image of $\varphi$ provides an injective homomorphism from $G$ into the automorphism group of $\Alt_f(\Omega)$. Since the latter is $\Sym(\Omega)$ \cite[Th. 8.2.A]{Dixon-Mortimer}, the claim follows. 
\end{proof}	
	
The following is a classical result from the theory of permutation groups due to Jordan-Wielandt  \cite[Th. 3.3.D]{Dixon-Mortimer}. Recall that an action of a group $G$ on a set $\Omega$ is \textbf{primitive} if it preserves no partition of $\Omega$ other than the one block partition and the partition into singletons. For transitive actions, this is equivalent to saying that point stabilizers are maximal subgroups. For instance any $2$-transitive action is primitive. 

\begin{thm} \label{thm-Jor-Wie}
If the action of a subgroup $G \leq \Sym(\Omega)$ on the infinite set $\Omega$ is primitive and if $G$ contains a non-trivial finitary permutation, then $G$ contains $\Alt_f(\Omega)$.
\end{thm}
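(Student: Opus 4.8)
The plan is to isolate the finitary part of $G$ and use primitivity to force it to contain $\Alt_f(\Omega)$; this is the classical Jordan--Wielandt theorem \cite[Th.~3.3.D]{Dixon-Mortimer}, and I sketch a proof. First I would set $N = \langle g \in G : g \text{ finitary}\rangle$. A conjugate of a finitary permutation is finitary and a product of two finitary permutations is finitary, so $N$ is a normal subgroup of $G$ all of whose elements are finitary, and $N \neq \{1\}$ by hypothesis. Since $G \leq \Sym(\Omega)$ acts faithfully, and a primitive action is transitive, every non-trivial normal subgroup is transitive: the $N$-orbits form a $G$-invariant partition of $\Omega$, which must be trivial, and $N \neq \{1\}$ excludes the partition into singletons. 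Hence $N$ is a transitive finitary group, and it suffices to show $N \supseteq \Alt_f(\Omega)$.

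Next I would aim to produce an element of very small support. Let $g \in N \setminus \{1\}$ have support $\Delta = \supp(g)$ of minimal cardinality $m \geq 2$. Primitivity forbids any $G$-invariant partition of $\Omega$ into finite blocks of size at least $2$, since such a partition is non-trivial ($\Omega$ being infinite). In particular the distinct translates $x\Delta$, $x \in G$, cannot be pairwise equal or disjoint: otherwise, being permuted by $G$ and covering $\Omega$ by transitivity, they would form exactly such a forbidden block system. Thus some translate $\Delta' = x\Delta$ meets $\Delta$ in a proper non-empty subset, $\emptyset \neq \Delta \cap \Delta' \subsetneq \Delta$. This proper overlap is the input to the combinatorial analysis whose conclusion is that $m \leq 3$, so that $g$ is either a transposition or a $3$-cycle.

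It then remains to pass from a single small cycle to all of $\Alt_f(\Omega)$. If $G$ contains a transposition, primitivity gives $G \supseteq \Sym_f(\Omega) \supseteq \Alt_f(\Omega)$; assume instead $G$ contains a $3$-cycle. Let $A \leq G$ be generated by all $3$-cycles of $G$; it is a non-trivial normal subgroup, hence transitive, and $A \subseteq \Alt_f(\Omega)$. Form the $G$-invariant graph on $\Omega$ joining two points that lie in the support of a common $3$-cycle of $G$; its connected components are a $G$-invariant partition, hence trivial by primitivity, and since the graph has an edge it is connected. Composing $3$-cycles that share two points along paths then yields every $3$-cycle on every finite subset of $\Omega$, and as $\Alt_f(\Omega)$ is generated by its $3$-cycles we obtain $A = \Alt_f(\Omega) \leq G$.

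The main obstacle is the reduction carried out in the second paragraph, namely that the proper overlap forces the minimal support $m$ to be at most $3$. This is Jordan's classical minimal-degree argument: from $g$ and a conjugate $g^x$ with properly overlapping supports one forms suitable products and commutators, tracks their supports within $\Delta \cup \Delta'$, and manufactures a non-identity finitary element of strictly smaller support, contradicting the minimality of $m$ unless $m \leq 3$. The tracking of fixed points is the delicate step and is where primitivity is genuinely used; in a complete write-up one would either carry out this bookkeeping by hand or invoke \cite[Th.~3.3.D]{Dixon-Mortimer} directly.
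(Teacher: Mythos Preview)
The paper does not give its own proof of this theorem: it is stated as a classical result and attributed to Jordan--Wielandt via the reference \cite[Th.~3.3.D]{Dixon-Mortimer}. So there is nothing to compare against; your sketch already goes further than the paper does.

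As a sketch of the classical argument your outline is sound: isolate the finitary normal subgroup $N$, use primitivity to see that $N$ is transitive and that the $G$-translates of a minimal support cannot form a block system, run Jordan's commutator trick on overlapping supports to force the minimal support down to size $\leq 3$, and then propagate a short cycle to all of $\Alt_f(\Omega)$ using primitivity once more. You rightly identify the support-reduction as the substantive step and are candid that you would either do the bookkeeping by hand or cite the reference.

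One small imprecision in your last paragraph: you speak of ``composing $3$-cycles that share two points along paths'', but connectivity of your graph only guarantees that consecutive $3$-cycles along a path share \emph{one} point (the intermediate vertex). The passage from a connected graph to ``every $3$-cycle lies in $G$'' needs an extra observation: conjugating one $3$-cycle in $G$ by another produces further $3$-cycles in $G$, and iterating this together with connectivity shows that for any finite $\Sigma\subset\Omega$ all $3$-cycles on $\Sigma$ lie in $G$. This is standard, but the phrase ``share two points'' overstates what the graph argument gives you directly.
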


In particular this result has the following easy consequence:

\begin{prop}
If a group $G$ is partially finitary, then $G$ admits a unique faithful 2-transitive action, which is highly transitive. 
\end{prop}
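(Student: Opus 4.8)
The plan is to prove that a partially finitary group $G$ admits a unique faithful $2$-transitive action, which is moreover highly transitive. Since $G$ is partially finitary, by definition there is an embedding of $G$ into some $\Sym(\Omega)$ whose image contains $\Alt_f(\Omega)$; this embedding is itself a faithful $2$-transitive (indeed highly transitive) action, because $\Alt_f(\Omega)$ already acts highly transitively on $\Omega$ and $G$ contains it. So the existence part is immediate, and the real content is uniqueness: I must show that any faithful $2$-transitive action of $G$ is conjugate (as a permutation group) to this one.

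Let me describe the uniqueness argument. Suppose $G$ acts faithfully and $2$-transitively on a set $\Omega'$; write $\rho\colon G\hookrightarrow\Sym(\Omega')$ for the corresponding embedding. A $2$-transitive action is primitive, so I would apply the Jordan--Wielandt theorem (Theorem \ref{thm-Jor-Wie}): if I can show that $\rho(G)$ contains a non-trivial finitary permutation of $\Omega'$, it will follow that $\rho(G)\supseteq\Alt_f(\Omega')$. To produce such a finitary permutation, I would use that $G$ already contains a copy $N$ of $\Alt_f(\Omega)$ as a \emph{normal} subgroup with trivial centralizer (this is exactly the intrinsic characterization established in the preceding lemma). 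The key step is then to understand how this normal subgroup $N$ can act on $\Omega'$: since $N\cong\Alt_f(\Omega)$ is simple on each infinite orbit and normal in $G$, and since $G$ acts primitively on $\Omega'$, the restriction of the action to $N$ is strongly constrained. In particular the support of any $3$-cycle in $N$ should give a finite-support permutation of $\Omega'$, which is what Jordan--Wielandt needs.

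The cleaner route to uniqueness, which I would favor, is to identify $\Omega'$ canonically with a transitive $G$-set and compare stabilizers. Concretely, once I know $\rho(G)\supseteq\Alt_f(\Omega')$, I get two faithful actions of $G$ each containing a normal copy of a finitary alternating group with trivial centralizer. The conjugation action of $G$ on such a normal subgroup embeds $G$ into $\Sym(\Omega')=\aut(\Alt_f(\Omega'))$ in a way that recovers the permutation action. The point is that the normal finitary alternating subgroup is \emph{intrinsic} to $G$: if $G$ contained two distinct such normal subgroups they would commute (being normal with simple-on-orbits structure) contradicting the trivial centralizer, so $N$ is unique inside $G$. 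Hence both actions are obtained from the single subgroup $N\le G$ via the canonical identification $\aut(N)\cong\Sym(\Omega)$, forcing $\Omega'$ to be $G$-equivariantly in bijection with $\Omega$ and the two actions to be conjugate. Finally, since the action of $\Alt_f(\Omega)$ on $\Omega$ is highly transitive and $\rho(G)$ contains $\Alt_f(\Omega')$, the action on $\Omega'$ is highly transitive as well.

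The main obstacle I anticipate is the step producing a non-trivial finitary permutation in $\rho(G)$, i.e. controlling the action of the normal subgroup $N\cong\Alt_f(\Omega)$ on the a priori unrelated set $\Omega'$. The worry is that $N$ might in principle act with no finite support on $\Omega'$. I expect this to be ruled out by primitivity together with the structure of normal subgroups of a $2$-transitive group: a normal subgroup of a primitive group is transitive, and a transitive faithful action of $\Alt_f(\Omega)$ that is a subaction of a $2$-transitive $G$-action must be (equivalent to) the standard one, so that $3$-cycles remain finitary. Pinning down this equivariant rigidity of $\Alt_f(\Omega)$-actions is the delicate point; everything after it is bookkeeping via the $\aut(\Alt_f(\Omega))\cong\Sym(\Omega)$ identification already invoked in the preceding lemma.
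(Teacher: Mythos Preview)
Your proposal identifies the right ingredients (the normal subgroup $N\cong\Alt_f(\Omega)$, Jordan--Wielandt, the identification $\aut(\Alt_f(\Omega))\cong\Sym(\Omega)$), but it has a genuine gap at exactly the place you flag as the ``main obstacle'': you never actually show that $\rho(N)$ contains a non-trivial finitary permutation of $\Omega'$. Your proposed fix---that a transitive faithful action of $\Alt_f(\Omega)$ sitting inside a $2$-transitive $G$-action must be the standard one---is not an argument but a restatement of what you are trying to prove. There is no a priori reason a $3$-cycle of $N$ should move only finitely many points of $\Omega'$; $\Alt_f(\Omega)$ has many faithful transitive actions in which $3$-cycles have infinite support (e.g.\ on the $\Alt_f(\Omega)$-orbit of an infinite--coinfinite subset of $\Omega$). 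So Jordan--Wielandt on $\Omega'$ cannot be invoked without substantial further work, and the ``cleaner route'' via uniqueness of the normal $\Alt_f$-subgroup, while correct once you know $\rho(G)\supseteq\Alt_f(\Omega')$, is never reached.

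The paper's proof avoids this difficulty by reversing the roles of $\Omega$ and $\Omega'$. Rather than analysing how $N$ acts on $\Omega'$, it takes a point stabilizer $H=G_{\omega'}$ of the new action and studies $H$ as a subgroup of $\Sym(\Omega)$. A short argument rules out $A\cap H=\{1\}$ (where $A=\Alt_f(\Omega)$): if it held, $G=A\rtimes H$ and the action of $H$ on $G/H$ would be conjugation on $A$, forcing all non-trivial elements of $A$ to be $H$-conjugate, which is absurd. Hence $H$ contains a non-trivial element of $A$, i.e.\ a finitary permutation of $\Omega$. Since $H$ cannot contain all of $A$ (as $A$ is transitive on $G/H$), Jordan--Wielandt applied to the $H$-action on $\Omega$ shows $H$ is imprimitive there; maximality of $H$ in $G$ then forces $H$ to be the stabilizer of a partition of $\Omega$, and a direct case analysis of which partitions give a $2$-transitive orbit pins $H$ down as a point stabilizer of the original action on $\Omega$. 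The key idea you are missing is to apply Jordan--Wielandt on $\Omega$, not on $\Omega'$.
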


\begin{proof}
Suppose that $G$ admits an embedding into $\Sym(\Omega)$ such that the image of $G$ contains $A = \Alt_f(\Omega)$.  We claim that  actually the action of $G$ on $\Omega$ is its only $2$-transitive action. Indeed suppose $H$ is a point stabilizer for a faithful $2$-transitive action of $G$. Since $A$ is normal in $G$, the partition of $G/H$ into $A$-orbits forms a $G$-invariant partition, which is not the partition into points as $A$ must act faithfully. Since a $2$-transitive action is primitive, it follows that this partition must be trivial, and thus the action of $A$ on $G/H$ is transitive. If $A \cap H$ is trivial, then $G = A \rtimes H$ and the action of $H$ on $G/H$ coincides with the conjugation action of $H$ on $A$. This is clearly impossible since non-trivial elements of $A$ are not all conjugate. Therefore $H$ must intersect $A$ non-trivially, thus contains elements with finite support on $\Omega$. However $H$ cannot contain the whole group $A$, as we already noted that $A$ acts transitively on $G/H$, and thus cannot fix $H$. From this we deduce that the action of $H$ on $\Omega$ is not primitive by the Jordan-Wielandt theorem. Hence by maximality of $H$ in $G$ we see that $H$ is the stabilizer of a non-trivial partition $\P$ of $\Omega$, and the 2-transitive action we started with is the action on the $G$-orbit $\mathcal{O}_\P$ of $\P$.  Note that the action of $G$ on pairs of distinct elements $(\P_1, \P_2)$ of  $\mathcal{O}_\P$ must preserve the number of blocks of $\P_1$ which do not belong to $\P_2$. If $\P$ has at least 3 blocks, then by considering images of $\P$ under elements of $A$ it is easy to find two pairs for which this number is finite and different, contradicting 2-transitivity. We deduce that $\P$ must be of the form $\left\lbrace \Omega_1,\Omega_2\right\rbrace$ with $\Omega_1, \Omega_2$ non-empty. Equivalently, $H$ is just the stabilizer of $\Omega_1$ in $G$, and the action of $G$ on $G/H$ is the action of $G$ on the $G$-orbit of $\Omega_1$. But a very similar reasoning shows that this  action can be $2$-transitive only when $\Omega_1$ or the complement of $\Omega_1$ is a singleton. So $H$ is a point stabilizer for the $G$-action on $\Omega$, and the claim is proven. \qedhere
\end{proof}

\subsection{A brief discussion about high transitivity} \label{elementary-obs}

In this paragraph we discuss some basic facts about highly transitive actions, and collect some  observations that allow to exclude the existence of highly transitive actions of a given group. As far as we know, at the moment these observations provide the only known obstructions to the existence of highly transitive actions of a countable group.

Let $G$ be a group admitting a faithful highly transitive action on $\Omega$. The first preliminary observation is that the action of every non-trivial normal subgroup of $G$ remains highly transitive. This follows for instance from the classical result that every non-trivial normal subgroup of $\Sym(\Omega)$ contains the group $\Alt_f(\Omega)$ \cite{Onofri,Sch-Ul}, since highly transitive actions correspond to homomorphisms to $\Sym(\Omega)$ with dense image. Therefore in particular every non-trivial normal subgroup of $G$ has trivial centralizer in $G$. 

If $\Sigma$ is a finite subset of $\Omega$ of cardinality $k$, then the stabilizer of $\Sigma$ in $G$ admits a homomorphism to $\Sym(\Sigma)$, that is surjective by the definition of high transitivity. Hence the group $G$ admits all finite symmetric groups (hence all finite groups) as subquotients. Recall that $Q$ is a \textbf{subquotient} of a group $G$ if $G$ admits a subgroup $H$ such that $Q$ is isomorphic to a quotient of $H$. This implies for instance that $G$ cannot be of bounded exponent, or more generally that $G$ cannot be a torsion group whose element orders are divisible by only finitely many primes. This also implies that $G$ cannot satisfy a non-trivial \textbf{identity} (or law), i.e.\ there does not exist a non-trivial element $w$ in the free group  $\mathbb{F}_n$ of rank $n$ such that every homomorphism from $\mathbb{F}_n$ to $G$ is trivial on $w$.

This generalizes to the more general notion of mixed identities. If $w$ is an element of the free product $G \ast \mathbb{F}_n$, we say that $G$ satisfies the \textbf{mixed-identity} $w=1$ if every homomorphism from $G \ast \mathbb{F}_n$ to $G$ that is the identity on $G$ is trivial on $w$. We say that $G$ satisfies a (non-trivial) mixed-identity if one can find $n \geq 1$ and a non-trivial element $w \in G \ast \mathbb{F}_n$ such that $G$ satisfies the mixed-identity $w=1$. By \cite[Theorem 5.9]{Hull-Osin}, if a group $G$ admits a faithful highly transitive action and $G$ is not partially finitary, then $G$ does not satisfy a mixed-identity. {Basic examples of groups that satisfy a non-trivial mixed identity are non-trivial direct products, and more generally any group that admits two non-trivial commuting normal subgroups.} Note that such groups may very well not satisfy an identity, for example because they might contain free subgroups.  Slightly less obvious  examples are groups admitting a fixed point free action by homeomorphisms on the real line such that some non-trivial elements have a relatively compact support, e.g.\ Thompson's group $F$ \cite{id-F}. Similar but slightly more elaborated phenomena happen for certain groups acting on the circle or acting on trees \cite[Prop. 3.7-4.7]{LBMB-trans-degree}.

\section{Confined subgroups of infinite symmetric groups and Theorem \ref{thm-intro-main}} \label{sec-conf-sym}

\subsection{Confined subgroups of infinite symmetric groups } \label{subsec}

Whenever $H$ is a subgroup of $\Sym(\Omega)$, we denote by $\Omega_{H,f}$ the union  of the finite $H$-orbits in $\Omega$, and by $\Omega_{H,\infty}$ the union of the infinite $H$-orbits. Given a partition $\mathcal{Q}=(\Omega_i)_{i\in I}$ of $\Omega$, the subsets $\Omega_i$ are called the \textbf{blocks} of $\mathcal{Q}$. We say that $\mathcal{Q}$ is  a \textbf{thick partition} if every block of $\mathcal{Q}$ has cardinality at least two.

\begin{lem} \label{lem-conf-ht}
	Let $H \leq  \Sym(\Omega)$ be a subgroup that is confined by a subgroup $G \leq  \Sym(\Omega)$ that is highly transitive on $\Omega$, and let $P$ be a confining subset for $(H,G)$ of cardinality $r=r_1+r_2$, where $r_1$ and $r_2$ are respectively the number of elements of $P$ with finite support and with infinite support. Then the following hold:
	\begin{enumerate}[label=(\roman*)]
		\item \label{item-conf-sym-fix} $H$ has at most $r-1$ fixed points in $\Omega$.
		\item  \label{item-conf-sym-orb} $\Omega_{H,f}$ is finite.
		\item \label{item-conf-sym-partition} Every $H$-invariant thick partition $\mathcal{Q} = (\Omega_i)_{i \in I}$ of $\Omega$ is finite; and: \begin{enumerate}
			\item if $r_1=0$ then $\mathcal{Q}$ has exactly one infinite block $\Omega_{i_0}$, and $|\Omega \setminus \Omega_{i_0}| \leq r-1$;
			\item if $r_1 \geq 1$ then $\mathcal{Q}$ has at most $r_1$ infinite blocks.
		\end{enumerate}
	\end{enumerate}
\end{lem}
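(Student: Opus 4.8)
The plan is to work from the reformulation of confinement: that $P=\{p_1,\dots,p_r\}$ is confining for $(H,G)$ means exactly that for every $g\in G$ there is some $j$ with $g^{-1}p_jg\in H$. I will prove each clause by contraposition, producing, under the negation of the conclusion, a single $g\in G$ with $g^{-1}p_jg\notin H$ for \emph{every} $j$ at once, which contradicts confinement. The mechanism is uniform: each clause concerns an $H$-invariant feature of $\Omega$ --- the common fixed-point set $\fix(H)$ in (i), an $H$-invariant thick partition in (iii) --- and every element of $H$ respects this feature. Hence if $g^{-1}p_jg\in H$, then $p_j$ must respect the transported feature (it fixes $g\,\fix(H)$ pointwise, resp.\ preserves the partition $g\mathcal{Q}$). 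High transitivity enters through the fact that any finite partial injection of $\Omega$ extends to some $g\in G$; so I may prescribe $g$ on a finite set and need only arrange that each $p_j$ \emph{visibly} fails to respect the transported feature on finitely many witnesses. The numbers $r-1$ and $r_1$ will fall out of counting how many such failures $r$ elements can be forced into simultaneously.

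For (i), suppose $H$ fixes $r$ distinct points $x_1,\dots,x_r$. For each $j$ pick $q_j\in\supp(p_j)$ (nonempty as $p_j\neq 1$); the set $\{q_1,\dots,q_r\}$ has at most $r$ points, so enlarge it to a set $Y$ of exactly $r$ distinct points. By $r$-transitivity take $g\in G$ with $\{gx_1,\dots,gx_r\}=Y$. Then every $p_j$ moves some $gx_i$, so $g^{-1}p_jg$ moves $x_i$ and cannot fix all $x_1,\dots,x_r$; thus $g^{-1}p_jg\notin H$ for all $j$, the desired contradiction. Hence $H$ has at most $r-1$ fixed points.

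The main work is clause (iii). Let $\mathcal{Q}=(\Omega_i)$ be an $H$-invariant thick partition; every $h\in H$ permutes the blocks, and since the image of a block is a block of the same cardinality, $h$ permutes the infinite blocks among themselves, and fixes each of them setwise when $h$ has finite support. To witness $g^{-1}p_jg\notin H$ it suffices to exhibit two points in a common $\mathcal{Q}$-block whose images under $g^{-1}p_jg$ lie in two distinct blocks; such a witness involves only finitely many points, whose $g$-images and $g$-preimages I pin down by a partial injection. Breaking one $p_j$ costs essentially a block of size $\ge 2$ together with room to place a "crossing pair'' into two distinct blocks, and placing all witnesses disjointly inside large blocks is harmless, so the real constraint is how many distinct blocks are available to separate these crossing pairs. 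For the finite-support elements this is a graph-colouring problem: for each such $p_j$ I pick one of its nontrivial cycles and one adjacent pair in it, and must properly colour the resulting graph --- with at most one edge per element, hence at most $r_1$ edges and chromatic number at most $r_1+1$ --- using the available blocks as colours. Thus if more than $r_1$ infinite blocks exist (at least $r_1+1$ colours with unlimited room) I can colour, breaking all finite-support $p_j$, and by a separate easy crossing-pair construction needing only two blocks I simultaneously break all infinite-support $p_j$; this forces at most $r_1$ infinite blocks when $r_1\ge 1$, and at most one when $r_1=0$. The same colouring with \emph{all} blocks as colours rules out infinitely many blocks, so $\mathcal{Q}$ is finite. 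Finally, when $r_1=0$ the unique infinite block $\Omega_{i_0}$ forces $H$ to preserve the finite set $F=\Omega\setminus\Omega_{i_0}$; if $|F|\ge r$, then since each $p_j$ has infinite support no finite set meeting an infinite $p_j$-orbit, or containing exactly one point of a $p_j$-transposition, is $p_j$-invariant, so a greedy choice yields a set $Y$ of size $|F|$ that no $p_j$ preserves, and any $g\in G$ with $gF=Y$ breaks all $p_j$; hence $|F|\le r-1$. I expect this clause to be the main obstacle: one must defeat all $r$ elements with a \emph{single} $g$, keep the prescribed partial injection injective, and respect block sizes (only infinite blocks have unlimited room), and the exact constants $r_1$ and $r-1$ with the dichotomy at $r_1=0$ must be read off carefully from this colouring/hitting-set bookkeeping.

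Clause (ii) then follows from (iii). If $\Omega_{H,f}$ were infinite then, since $H$ has only finitely many fixed points by (i), there are infinitely many finite $H$-orbits of size $\ge 2$. Taking these as blocks, and absorbing all remaining points --- together with a single leftover point into one of the orbit-blocks if needed to maintain thickness --- gives an $H$-invariant thick partition with infinitely many blocks, contradicting the finiteness established in (iii). Hence $\Omega_{H,f}$ is finite.
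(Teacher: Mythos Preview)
Your overall architecture matches the paper's: argue each clause by contraposition, and use high transitivity to realize a finite partial injection as an element $g\in G$ which simultaneously forces every $g^{-1}p_jg$ (equivalently $gp_jg^{-1}$) to violate the relevant $H$-invariant structure. Part (i) and the deduction of (ii) from (i)+(iii) are essentially identical to the paper's. For part (iii) the paper handles $P_1$ differently from you: in the ``infinitely many blocks'' case it sends the entire finite set $S=\bigcup_{\sigma\in P_1}\supp(\sigma)$ to pairwise distinct blocks, so that $\supp(g\sigma g^{-1})$ meets each block in at most one point (which, by thickness, already prevents $g\sigma g^{-1}$ from preserving $\mathcal Q$); in the ``$r_1+1$ infinite blocks'' case it uses a hitting set $Y\subset S$ of size $\le r_1$ with each $y_i$ sent alone to a separate infinite block and all of $S\setminus Y$ dumped into the remaining one. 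Your edge-per-element graph colouring is a pleasant alternative to the hitting-set step and does give the correct bound $r_1$ (resp.\ $1$) on the number of infinite blocks: once $g(a_j)$ lies in an \emph{infinite} block $\Omega_\alpha$ and $g(p_j(a_j))$ in a different block, the finite-support permutation $gp_jg^{-1}$ cannot carry $\Omega_\alpha$ onto another block, regardless of where the rest of $\supp(p_j)$ lands.

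There is, however, a genuine gap in your finiteness argument. You write ``the same colouring with all blocks as colours rules out infinitely many blocks,'' but this reuse fails when the colour-blocks are finite. Your colouring only pins down $g$ on the $\le 2r_1$ vertices $\{a_j,p_j(a_j)\}$; the remaining points of $\supp(p_j)$ are placed arbitrarily by the extension of $g$. If $g(a_j)$ lands in a \emph{finite} block $\Omega_\alpha$, those uncontrolled points may fill $\Omega_\alpha$ exactly so that $gp_jg^{-1}$ permutes blocks. Concretely, take $p_j=(a\,b)(c\,d)$ with your chosen edge $(a,b)$: if $\Omega_\alpha=\{g(a),g(c)\}$ and $\Omega_\beta=\{g(b),g(d)\}$ are blocks of size $2$, then $gp_jg^{-1}$ swaps $\Omega_\alpha$ and $\Omega_\beta$ and hence preserves $\mathcal Q$. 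The fix is exactly the paper's move: with infinitely many thick blocks available, prescribe $g$ on all of $S$, sending its points to pairwise distinct blocks; then each block meets $g(\supp(p_j))$ in at most one point and thickness finishes the job. A smaller point: your $|F|\le r-1$ step via a set $Y$ not preserved by any $p_j$ is a valid alternative to the paper's crossing-pair construction, but the parenthetical about ``an infinite $p_j$-orbit or a $p_j$-transposition'' does not cover all infinite-support permutations (e.g.\ an infinite product of $3$-cycles); you should instead simply pick $y_1,\dots,y_r$ with $y_j,p_j(y_j)$ all distinct, set $Y\supset\{y_1,\dots,y_r\}$, and choose the remaining $|F|-r$ points of $Y$ outside $\{p_1(y_1),\dots,p_r(y_r)\}$.
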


\begin{proof}
	\ref{item-conf-sym-fix}. Suppose for a contradiction that $H$ fixes $r$ distinct fixed points $x_1,\ldots,x_r$. We choose a set $\Sigma \subseteq \Omega$ of cardinality at most $r$ such that every element of $P$ moves a point in $\Sigma$. Since the action of $G$ is highly transitive, one can find $g \in G$ such that $g(\Sigma) \subseteq \left\lbrace x_1,\ldots,x_r \right\rbrace$. Then every element of $gPg^{-1}$ moves at least one $x_i$, and hence $gPg^{-1} \cap H$ is empty, a contradiction.

	We show the statement \ref{item-conf-sym-partition}. Note that \ref{item-conf-sym-orb} will follow in view of \ref{item-conf-sym-fix}. Assume that $\mathcal{Q} = (\Omega_i)_{i \in I}$ is a thick $H$-invariant partition of $\Omega$ . We decompose $P = P_1 \cup P_2$, where $P_1$ is the set of elements of $P$ having finite support. Let $S\subset \Omega$ be the union of the supports of elements of $P_1$. We write $ P_2 = \left\{\sigma_{r_1+1}, \ldots, \sigma_r\right\}$. Note that for every finite set $F\subset \Omega$ and every $\sigma_i\in P_2$, there exists $x\in \Omega$ such that $x \neq \sigma_i(x)$ and $x, \sigma_i(x)\notin  F$ (just choose $x\in \supp(\sigma_i)\setminus (F\cup \sigma_i^{-1}(F))$, which is non-empty since the support $\supp(\sigma_i)$ is infinite). Using this observation repeatedly,  we can choose points $x_i$ and $x_i'$ such that \[ x_{r_1+1}, x_{r_1+1}', \ldots, x_r, x_r', \sigma_{r_1+1}(x_{r_1+1}), \sigma_{r_1+1}(x_{r_1+1}') \ldots, \sigma_r(x_r), \sigma_r(x_r') \] are all distinct and all outside $S$.
	
	Now we write \[ T = S \cup \left\{x_{r_1+1}, x_{r_1+1}', \ldots, x_r, x_r', \sigma_{r_1+1}(x_{r_1+1}), \sigma_{r_1+1}(x_{r_1+1}') \ldots, \sigma_r(x_r), \sigma_r(x_r')\right\}. \] Assume for a contradiction that $\mathcal{Q}$ has infinitely many blocks. Since the group $G$ is highly transitive and all blocks have cardinality at least two, we can find $g \in G$ with the following properties:

	\begin{itemize}
		\item for any two distinct $s,s' \in S$, $g(s)$ and $g(s')$ belong to distinct blocks;
		\item for all $i$ there exist ${j(i)}, {k(i)},{l(i)}$ such that $\Omega_{k(i)},\Omega_{l(i)}$ are distinct and $g(x_i), g(x_i') \in \Omega_{j(i)}$, $g(\sigma_i(x_i)) \in \Omega_{j(i)}$ and $g(\sigma_i(x_i')) \in \Omega_{l(i)}$.
	\end{itemize}
	
	For such an element $g$, we claim that $gPg^{-1}$ cannot intersect $H$. Indeed, assume first that there is $h \in H \cap gP_1g^{-1}$. Then on the one hand the element $h$ must preserve the partition, and on the other hand the support of $h$ intersects every block of the partition along at most a singleton. This is of course impossible, so $gP_1g^{-1}$ does not intersect $H$. Now if $i$ is such that $g\sigma_ig^{-1}$ belongs to $H$, then the block $\Omega_{j(i)}$ must be sent to both $\Omega_{k(i)}$ and $\Omega_{l(i)}$, which is again impossible. So we have obtained a contradiction, as desired. So $\mathcal{Q}$ is finite.
	
	Assume now that $r_1=0$, i.e.\ every element of $P$ has infinite support. It follows from the previous paragraph that $\mathcal{Q}$ has at least one infinite block $\Omega_{i_0}$. We have to argue that the complement of $\Omega_{i_0}$ has cardinality at most $r-1$. Suppose this is not the case. Keeping the same notation as before, again by high transitivity of the group $G$ we can find $g \in G$ such that $g(x_i), g(x_i'), g(\sigma_i(x_i)) \in \Omega_{i_0}$ and $g(\sigma_i(x_i')) \notin \Omega_{i_0}$ for all $i$. This implies that none of the elements $g\sigma_ig^{-1}$ sends $\Omega_{i_0}$ onto a block of $\mathcal{Q}$. Hence $gPg^{-1}$ does not intersect $H$, and again we have a contradiction. 
	
	Finally we assume that $r_1 \geq 1$ and that $\mathcal{Q}$ has at least $r_1 + 1 \geq 2$ infinite blocks, say $\Omega_{j_1},\ldots, \Omega_{j_{r_1+1}}$. We choose a finite subset $Y=\{y_1,\ldots, y_k\}\subset S$ of cardinality $k\le r_1$ such that every element of $P_1$ moves a point in $Y$. We choose $g \in G$ such that $g(y_i)\in \Omega_{j_i}$  for all $i=1,\ldots, k$ and $g(S\setminus Y)\subset \Omega_{j_{k+1}}$.

	 Since in addition $r_1+1\ge 2$, we can also arrange $g$ to satisfy $g(x_i), g(x_i'), g(\sigma_i(x_i))\in \Omega_{j_1}$ and $g(\sigma_i(x_i'))\in \Omega_{j_2}$ for all $i=r_1+1, \ldots, r$. We claim that $H$ cannot intersect $gPg^{-1}$. Indeed assume by contradiction that $H$ contains $g\sigma g^{-1}$ with $\sigma\in P_1$. Let $i$ be such that $\sigma(y_i)\neq y_i$.  Then $g\sigma g^{-1}$ moves $g(y_i)\in \Omega_{j_i}$. However  the support of $g\sigma g$ intersects $\Omega_{j_1}$ only along $\{g(y_i)\}$, as  $g(S)\cap \Omega_{j_i}=\{g(y_i)\}$ by construction. Hence $g\sigma g^{-1}$ must necessarily preserve the block $\Omega_{j_i}$ and hence fix $g(y_i)$,  which is absurd. Finally all elements of  $g P_2g^{-1}$ must send one of the points $g(x_i)\in \Omega_{j_1}$ inside $\Omega_{j_1}$ and the corresponding point $g(x_i)'\in \Omega_{j_1}$ inside $\Omega_{j_2}$, so that none of them can preserve $\mathcal{Q}$. Thus $H$ does not intersect $g P_2g^{-1}$. 
 \end{proof}

In the sequel we borrow terminology and notation from \cite[\S 3]{LBMB-comm-lemma}.
	\begin{defin} \label{def-displace}
	Let $P$ be a finite set of non-trivial elements of $\Sym(\Omega)$, and let $\left\lbrace \Omega_\sigma \right\rbrace_{\sigma \in P} $  be a family of non-empty subsets of $\Omega$ indexed by $P$. We say that $\left\lbrace \Omega_\sigma \right\rbrace_{\sigma \in P} $ is a \textbf{displacement configuration} for $P$ if the following hold: 
	\begin{enumerate}[label=\roman*)]
		\item \label{item-disj-eq} for all $\sigma, \rho \in P$, either $\Omega_\sigma = \Omega_\rho$, or $\Omega_\sigma$ and $\Omega_\rho$ are disjoint;
		\item \label{item-one-move} for all $\sigma, \rho \in P$, either $\sigma$ fixes $\Omega_\rho$ pointwise, or $\sigma(\Omega_\rho)$ is disjoint from $\bigcup_{\alpha \in P}\Omega_\alpha$;
		\item \label{item-all-disj} for all $\sigma \in P$, $\sigma(\Omega_\sigma)$ is disjoint from $\bigcup_{\alpha \in P}\Omega_\alpha$ and also from $\bigcup_{\alpha \in P} \sigma^{-1}(\Omega_\alpha)$.
	\end{enumerate}
\end{defin}

Note that by the last condition, the subsets $\sigma^{-1}(\Omega_\sigma), \Omega_\sigma, \sigma(\Omega_\sigma)$ are disjoint for  all $\sigma \in P$. In particular this prevents $\sigma $ from being of order two. We will make use of the following easy lemma, which is a partial converse of the previous observation. 

\begin{lem} \label{lem-displconf-perm}
	Let $\sigma_1,\ldots,\sigma_q \in \Sym(\Omega)$ such that $\sigma_i^2$ has infinite support for all $i$. Then there exist infinite subsets $\Sigma_1,\ldots,\Sigma_q \subset \Omega$ that form a displacement configuration for $\left\{\sigma_1,\ldots,\sigma_q\right\}$.
\end{lem}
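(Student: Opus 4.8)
The plan is to build the sets $\Sigma_1,\dots,\Sigma_q$ (setting $\Omega_{\sigma_i}=\Sigma_i$) as pairwise disjoint countably infinite sets by selecting their points one at a time, maintaining the three requirements of Definition \ref{def-displace} throughout. Since $P=\{\sigma_1,\ldots,\sigma_q\}$ is a set we may assume the $\sigma_i$ are pairwise distinct, so requirement (i) simply amounts to making the $\Sigma_i$ pairwise disjoint. Unravelling the definition, once the $\Sigma_i$ are disjoint and we write $U=\bigcup_k\Sigma_k$, requirements (ii) and (iii) reduce to point conditions: for every $i$ and every $x\in\Sigma_i$ one needs $\sigma_i(x)\notin U$ and $\sigma_i^2(x)\notin U$ (this is (iii), and the diagonal case $\rho=\sigma=\sigma_i$ of (ii)); and for every ordered pair $i\neq j$ one needs that $\sigma_i$ either fixes $\Sigma_j$ pointwise or satisfies $\sigma_i(\Sigma_j)\cap U=\emptyset$ (this is (ii)). In particular each point of $\Sigma_i$ must lie in $\supp(\sigma_i^2)$, which is exactly where the hypothesis enters: this set is infinite, providing an inexhaustible supply of candidates.

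The one genuinely non-formal point is the dichotomy in (ii): for a fixed $j$, the permutation $\sigma_i$ must treat all points of $\Sigma_j$ in the same way, fixing every one of them or moving every one of them out of $U$. I would secure this by a pigeonhole step performed before any point is chosen. For each $j$, assign to every $x\in\supp(\sigma_j^2)$ its \emph{type}, namely the record, for each $i\neq j$, of whether $\sigma_i(x)=x$ or $\sigma_i(x)\neq x$. There are only $2^{q-1}$ possible types, so since $\supp(\sigma_j^2)$ is infinite, some type is realised by an infinite subset $V_j\subseteq\supp(\sigma_j^2)$. Fixing such a $V_j$ partitions the indices $i\neq j$ into those that fix every point of $V_j$ and those that move every point of $V_j$; drawing all points of $\Sigma_j$ from $V_j$ then makes the fix/move alternative of (ii) automatically consistent across $\Sigma_j$.

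With the $V_j$ in hand the rest is a routine greedy construction. I would enumerate the slots $(j,n)$ with $1\le j\le q$ and $n\ge 1$ in a single sequence (round-robin in $j$), and at slot $(j,n)$ choose a new point $x_{j,n}\in V_j$. At this step only finitely many points have been chosen, so only finitely many values are forbidden: exclude the points already chosen (disjointness); exclude the finitely many $x$ for which $\sigma_i(x)$ (with $\sigma_i$ a moving index or $i=j$) or $\sigma_j^2(x)$ lies among the already chosen points (the forward constraints against the current union); and exclude the finitely many values $\sigma_i(y)$ or $\sigma_{j'}^2(y)$ coming from previously chosen $y$ with the relevant indices (the backward constraints, preventing a later point from landing on a translate of an earlier one). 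Since $V_j$ is infinite a valid choice always exists. Membership in $V_j\subseteq\supp(\sigma_j^2)$ already guarantees $\sigma_j(x)\neq x$, $\sigma_j^2(x)\neq x$, and $\sigma_i(x)\neq x$ for the moving indices, so the conditions $\sigma_i(x)\notin U$ only need to be enforced against the \emph{other} chosen points, precisely as arranged.

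Finally I would check that $\Sigma_i=\Omega_{\sigma_i}$ satisfies Definition \ref{def-displace}: disjointness gives (i); the forward and backward avoidance together yield $\sigma_i(x)\notin U$ and $\sigma_i^2(x)\notin U$ for all $x\in\Sigma_i$, which is (iii) and the diagonal case of (ii); and the selection inside $V_j$ combined with that same avoidance gives, for each ordered pair $i\neq j$, either $\Sigma_j\subseteq\fix(\sigma_i)$ or $\sigma_i(\Sigma_j)\cap U=\emptyset$, which is (ii). The only real obstacle is the consistency of the fix/move alternative, resolved once and for all by the pigeonhole on types; after that is isolated, every remaining verification is a finite-avoidance argument enabled by the infinitude of each $\supp(\sigma_i^2)$.
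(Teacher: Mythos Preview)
Your argument is correct and provides a genuinely different proof from the one in the paper. The paper proceeds by passing to the Stone--\v{C}ech compactification $\beta\Omega$: since each $\sigma_i^2$ has infinite support, $\sigma_i^2$ acts non-trivially on the remainder $\beta^*\Omega=\beta\Omega\setminus\Omega$, and one can then quote \cite[Lemma~4.1]{LBMB-comm-lemma} (about homeomorphisms of Hausdorff spaces without isolated points) to obtain a displacement configuration consisting of open subsets of $\beta^*\Omega$, which one then pulls back to infinite subsets of $\Omega$. The paper explicitly remarks afterwards that a direct proof is possible ``by adapting the argument of \cite[Lemma~4.1]{LBMB-comm-lemma}''; your construction is precisely such a direct argument.

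The two approaches buy different things. The paper's route is short because it reuses existing machinery, but it imports the Stone--\v{C}ech compactification solely to access a lemma stated for topological spaces. Your approach is longer to write out but entirely elementary and self-contained: the pigeonhole on the $2^{q-1}$ fix/move types is exactly the device that resolves the global consistency in condition~(ii), and once that is done the remaining constraints are all of the form ``finitely many forbidden values against an infinite supply $V_j$'', so the greedy step goes through. Your identification of the point conditions is accurate: with the $\Sigma_i$ disjoint and $U=\bigcup_k\Sigma_k$, condition~(iii) reduces to $\sigma_j(\Sigma_j)\cap U=\emptyset$ and $\sigma_j^2(\Sigma_j)\cap U=\emptyset$, and condition~(ii) for $i\neq j$ reduces to the fix/move dichotomy plus $\sigma_i(\Sigma_j)\cap U=\emptyset$ in the ``move'' case; all of these are handled by your forward/backward avoidance list.
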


\begin{proof}
Lemma 4.1 in \cite{LBMB-comm-lemma} states that if $X$ is a Hausdorff space without isolated points, and if $\sigma_1,\ldots, \sigma_q$ are homeomorphisms of $X$ such that $\sigma_i^2$ is non-trivial, then there is a displacement configuration for $\{\sigma_1,\ldots, \sigma_q\}$ consisting of open subsets. The present statement can be deduced  from that one as follows. 	Consider the action of $\Sym(\Omega)$ on the Stone-\v{C}ech compactification $\beta \Omega$. Recall that there is a one-to-one correspondence between subsets of $\Omega$ and clopen subsets of $\beta \Omega$, given by $\Lambda \mapsto U_\Lambda$, where $U_\Lambda$ is the set of ultrafilters $\omega \in \beta \Omega$ such that $\omega(\Lambda) =1$. The fact that $\sigma_i^2$ moves infinitely many points in $\Omega$ is equivalent to saying that $\sigma_i^2$ does not act trivially on $\beta^* \Omega := \beta \Omega \setminus \Omega$. Hence we are in position to apply \cite[Lemma 4.1]{LBMB-comm-lemma} to $\left\{\sigma_1,\ldots,\sigma_q\right\}$. By this lemma, there exist open subsets $U_1, \ldots, U_q$ of $\beta^*  \Omega$ that form a displacement configuration for $\left\{\sigma_1,\ldots,\sigma_q\right\}$ in $\beta^*  \Omega$, and the existence of infinite subsets $\Sigma_1,\ldots,\Sigma_q$ that form a displacement configuration for $\left\{\sigma_1,\ldots,\sigma_q\right\}$ in $\Omega$ easily follows. 
\end{proof}

\begin{remark}
Alternatively, the lemma can be easily proven directly, without using the  Stone-\v{C}ech compactification, by adapting the argument of \cite[Lemma 4.1]{LBMB-comm-lemma}. 
\end{remark}

The following is Theorem 3.17 from \cite{LBMB-comm-lemma}. Whenever $\Sigma$ is a subset of $\Omega$ and $G$ is a group acting on $\Omega$, we denote by $\rist_G(\Sigma)$ the pointwise fixator of the complement of $\Sigma$, and call it the \textbf{rigid stabilizer} of $\Sigma$. In the sequel by the trivial conjugacy class we mean the singleton consisting of the trivial element of the group.

\begin{thm}[Commutator lemma for confined subgroups] \label{thm-conf-abstract}
	Let $H, G \le \Sym(\Omega)$ be two subgroups such that $H$ is confined by $G$, and let $P$ be a confining subset for $(H,G)$, and $r = |P|$. Assume that $\left\lbrace \Omega_\sigma \right\rbrace_{\sigma \in P} $ is a displacement configuration for $P$ such that for all $\sigma \in P$ the group $\rist_G(\Omega_\sigma)$ is non-trivial, and every non-trivial conjugacy class in $\rist_G(\Omega_\sigma)$ has cardinality $> r$.
	
	Then there exists $\rho \in P$ such that $H$ contains a non-trivial subgroup $N\le \rist_G(\Omega_\rho)$ whose normalizer in $\rist_G(\Omega_\rho)$ has index at most $r$. 
\end{thm}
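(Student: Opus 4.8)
The plan is to use the confining set to manufacture, through commutators with rigid stabilizer elements, nontrivial members of $H$ that are supported on a single block $\Omega_\rho$, and then to convert a pigeonhole over the finite set $P$ into the quantitative bound on the normalizer.

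I would begin with the two basic consequences of confinement: applying the hypothesis to $g=1$ shows $P\cap H\neq\emptyset$, and applying it to an arbitrary conjugator $c\in G$ produces some $\sigma\in P$ with $c^{-1}\sigma c\in H$. The elements I can feed into $H$ are therefore conjugates of members of $P$, and the strategy is to combine several of them. The basic computation is the following: if two conjugators $c_1,c_2$ lying in a rigid stabilizer $R:=\rist_G(\Omega_\rho)$ happen to realize the \emph{same} $\sigma\in P$, i.e.\ $c_i^{-1}\sigma c_i\in H$, then $(c_1^{-1}\sigma c_1)(c_2^{-1}\sigma c_2)^{-1}=c_1^{-1}[\sigma,e]c_1\in H$, where $e=c_1c_2^{-1}\in R$ and $[\sigma,e]=\sigma e\sigma^{-1}e^{-1}$. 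This is the commutator that gives the lemma its name, and crucially it does not require $\sigma$ itself to lie in $H$.

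The role of the displacement configuration is to localize this element. By condition \ref{item-one-move}, either $\sigma$ fixes $\Omega_\rho$ pointwise — in which case $[\sigma,e]=1$ and nothing is produced — or $\sigma(\Omega_\rho)$ is disjoint from $\bigcup_\alpha\Omega_\alpha$. In the latter case, using also the disjointness in condition \ref{item-all-disj}, the element $[\sigma,e]$ is supported on $\Omega_\rho\sqcup\sigma(\Omega_\rho)$ and acts as $e^{-1}$ on $\Omega_\rho$; conjugating by $c_1\in R$ then leaves an element of $H$ whose restriction to $\Omega_\rho$ is the conjugate $c_1^{-1}e^{-1}c_1$ and whose only other contribution lives on $\sigma(\Omega_\rho)$. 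To obtain a genuine element of $R$ I must cancel this off-block contribution; I would do so by pairing the element with suitable translates along the $\sigma$-orbit of $\Omega_\rho$, so that successive contributions telescope away. I expect this support bookkeeping — guaranteeing localization to one block, handling the degenerate pointwise-fixing alternative of condition \ref{item-one-move} by an appropriate choice of the block $\rho$, and checking that the off-block part genuinely cancels — to be the main technical obstacle, and it is precisely here that the three conditions of Definition \ref{def-displace} are used in full.

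For the quantitative statement I would run the pigeonhole. Fix a nontrivial $a\in R$; its $R$-conjugacy class $C$ has $|C|>r$ by hypothesis, whereas confinement assigns to each element of $C$, viewed as a conjugator, some element of $P$, and $|P|=r$. Hence two distinct elements $c_1,c_2\in C$ realize the same $\sigma$, and the computation above yields an element of $H\cap R$ with $\Omega_\rho$-part $c_1^{-1}e^{-1}c_1$ where $e=c_1c_2^{-1}\neq 1$; thus the bound $|C|>r$ is exactly what both forces the coincidence and guarantees that the produced element is nontrivial. Finally, the map sending a conjugator to the element of $P$ it realizes has at most $r$ fibres, and the heart of the count is to verify that conjugators lying in a common fibre normalize the subgroup $N\le H\cap R$ generated by the elements produced in this way, so that $[R:N_R(N)]\le r$. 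Assembling these steps produces the required block $\rho$ together with the subgroup $N$.
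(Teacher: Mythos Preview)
The paper does not prove this theorem: it is quoted verbatim as Theorem~3.17 of \cite{LBMB-comm-lemma} and used as a black box in the proof of Proposition~\ref{prop-confsym-precise}. So there is no proof in the present paper to compare against.

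That said, your sketch has the right skeleton --- conjugate elements of $P$ into $H$ using conjugators from a rigid stabilizer, take commutators to localize support, and pigeonhole over $P$ --- but two of the three steps you flag as ``the main technical obstacle'' are genuinely unresolved in what you wrote.

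First, the off-block cancellation. You correctly observe that $c_1^{-1}[\sigma,e]c_1\in H$ has a residual piece supported on $\sigma(\Omega_\rho)$, acting there as $\sigma e\sigma^{-1}$. Your proposal to ``pair with translates along the $\sigma$-orbit so that contributions telescope'' is not an argument: a second element of the same shape has its own off-block piece $\sigma e'\sigma^{-1}$, and their product on $\sigma(\Omega_\rho)$ is $\sigma e e'\sigma^{-1}$, which does not cancel. Nor can you simply commute with an element of $R$, since you do not yet know $H\cap R$ is nontrivial. This is exactly the place where condition~\ref{item-all-disj} of Definition~\ref{def-displace} --- the only condition mentioning $\sigma^{-1}(\Omega_\alpha)$ --- must intervene, and you have not used it. Relatedly, you never explain how $\rho$ is chosen: your pigeonhole produces a $\sigma$, but if that $\sigma$ happens to fix $\Omega_\rho$ pointwise (the first alternative in condition~\ref{item-one-move}) your commutator vanishes, and you give no mechanism to force the useful alternative or to change $\rho$.

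Second, the normalizer bound. The map ``conjugator $\mapsto$ element of $P$ it realizes'' is not well-defined (a conjugator may realize several $\sigma$'s), and even after making choices its fibres are merely subsets of $R$, not cosets. Saying that ``conjugators in a common fibre normalize $N$'' would need to be proven --- it is the whole content of the index bound --- and nothing in your outline suggests why it should hold. One expects instead an argument that products $c_1c_2^{-1}$ with $c_1,c_2$ in a common fibre already lie in (or generate) a subgroup of $H\cap R$, so that the fibres are contained in right cosets of that subgroup; but making this precise requires first resolving the localization issue above.
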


We will also use  the  following simplified version of \cite[Lemma 3.2]{LBMB-comm-lemma}.
\begin{lem}\label{l-abelian-confined}
Let $G, H$ be subgroups of a group $L$ such that $H$ is confined by $G$. Suppose that every confining subset $P\subset L$ for $(H, G)$ contains an element of order 2. Then there exists an abelian subgroup $K\le L$ which is confined by $G$.

\end{lem}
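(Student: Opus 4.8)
The plan is to read the hypothesis as saying that the \emph{displacement-configuration machinery is blocked}: Definition~\ref{def-displace} explicitly forbids elements of order two, and Lemma~\ref{lem-displconf-perm} requires each $\sigma_i^2$ to have infinite support, so one cannot feed an involution into the commutator lemma (Theorem~\ref{thm-conf-abstract}). Thus the assertion to prove is precisely the ``exceptional'' outcome: when $2$-torsion cannot be avoided in any confining subset, the confinement produces abelian structure directly. My first move is a normalization. Replacing $H$ by a $G$-conjugate $g_0 H g_0^{-1}$ with $g_0\in G$ changes neither the fact that $H$ is confined by $G$ nor the \emph{collection} of confining subsets for the pair (since $\{g g_0 : g\in G\}=G$), hence the standing hypothesis is conjugation-invariant. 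So I may assume from the outset that $H$ itself contains an involution $t$ lying in a fixed confining subset $P$.

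Next I would record an \textbf{essentiality} statement for the involutions. Fix a confining subset $P$ of minimal cardinality and split it as $P=P_{\mathrm{inv}}\sqcup P'$, where $P_{\mathrm{inv}}$ is the (nonempty) set of involutions in $P$ and $P'$ consists of the elements with $\sigma^2\neq 1$. The set $P'$ contains no involution, so by hypothesis $P'$ is \emph{not} confining; consequently there exist $g\in G$ with $gHg^{-1}\cap P\subseteq P_{\mathrm{inv}}$, i.e.\ a rich family of conjugates of $H$ that meet $P$ only through involutions. This is the substitute for the squaring trick: a non-involution $\sigma$ may be replaced by $\sigma^2$ while preserving confinement, but an involution cannot be replaced by any proper power, which is exactly why it persists and why the generic route is unavailable.

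To build $K$, I would pass to the orbit closure $\X=\overline{G\cdot H}\subseteq\sub(L)$, which is compact, $G$-invariant and avoids the trivial subgroup, and in which every member meets the finite set $P$ (membership being a clopen condition). Using compactness together with a pigeonhole argument over the finite set $P_{\mathrm{inv}}$ — tracking, along a net of conjugates furnished by the essentiality step, which involution of $P_{\mathrm{inv}}$ is realized — I would extract a subgroup $H_0\in\X$ carrying a distinguished involution and then exhibit commuting conjugates generating an abelian subgroup $K\le L$ (for instance an elementary abelian $2$-group, or the cyclic group $\langle st\rangle$ attached to two of the essential involutions $s,t$, exploiting the identity $[s,t]=(st)^2$). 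Confinement of $K$ would then be verified by producing a finite confining set assembled from $P_{\mathrm{inv}}$ and suitable products of its elements, using that each $G$-conjugate of $K$ inherits an essential involution.

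The main obstacle is \textbf{forcing commutativity} in the abstract group $L$. Absent a permutation representation, two essential involutions have no \emph{a priori} reason to commute, and — as emphasized above — the displacement-configuration tool that would normally arrange ``disjoint supports'' is inapplicable precisely because these elements square to the identity. The crux of the argument is therefore the extraction step: one must use minimality of $|P|$ and a careful Ramsey/pigeonhole analysis of how conjugates of $H$ distribute over the finitely many involutions of $P$ to guarantee that the subgroup generated is abelian, \emph{while simultaneously} keeping it confined by $G$. Balancing these two requirements — abelianness of the generated group against the finiteness of a confining set for it — is where the real work lies.
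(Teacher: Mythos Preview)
Your proposal is not a proof; it is a strategy outline that explicitly concedes the decisive step is missing. You yourself write in the final paragraph that producing commuting elements while \emph{simultaneously} keeping the resulting subgroup confined ``is where the real work lies''. That real work is the entire content of the lemma, and nothing in the proposal supplies it.

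Concretely: the preliminary moves are fine. Passing to a minimal confining set $P$, splitting $P=P_{\mathrm{inv}}\sqcup P'$, noting that $P'$ is not confining by hypothesis, and normalizing so that some involution $t\in P_{\mathrm{inv}}$ lies in $H$ (this does need minimality of $P$, which you invoke only later) --- all of this is correct and natural. Passing to the orbit closure $\overline{G\cdot H}\subset\sub(L)$ is also reasonable. But from that point on you offer only heuristics. The suggestions you float --- an elementary abelian $2$-group built from ``essential'' involutions, or the cyclic group $\langle st\rangle$ via the dihedral identity $[s,t]=(st)^2$ --- come with no argument that the resulting group is confined by $G$. For instance, $\langle t\rangle$ itself is confined by $G$ if and only if $t$ has finite $G$-conjugacy class, and nothing in your setup forces that; the same obstruction applies to $\langle st\rangle$. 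Your proposed confining set ``assembled from $P_{\mathrm{inv}}$ and suitable products of its elements'' is never specified, and it is not clear why any such set would meet every $G$-conjugate of your candidate $K$. The pigeonhole/Ramsey extraction you allude to is not carried out and it is not evident what statement it would even yield.

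For comparison: the paper does not prove this lemma at all; it is stated as a simplified version of \cite[Lemma~3.2]{LBMB-comm-lemma} and the reader is referred there. So there is no in-paper argument to compare with. If you want to complete your write-up, you will need either to consult that reference for the actual mechanism, or to supply a genuine construction of $K$ together with an explicit finite confining set and a verification that it works --- none of which is present here.
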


The following result provides a description of the confined subgroups of $\Sym(\Omega)$. If we think of confined subgroups as generalizations of normal subgroups, it might be compared to the classical result that every non-trivial normal subgroup of $\Sym(\Omega)$ contains $\Alt_f(\Omega)$ \cite{Onofri,Sch-Ul}, or to the description of all normal subgroups of $\Sym(\Omega)$ \cite[Th.\ 8.1.A]{Dixon-Mortimer}. For a subset $\Sigma$ of $\Omega$, the rigid stabilizer of $\Sigma$ in  $\Alt_f(\Omega)$ is naturally isomorphic to $\Alt_f(\Sigma)$. By abuse of notation we will denote by $\Alt_f(\Sigma)$ this rigid stabilizer.

\begin{thm} \label{thm-confi-SYM}
	For a subgroup $H \leq \Sym(\Omega)$, the following are equivalent:
	\begin{enumerate}[label=\roman*)]
		\item \label{item-confbysym}  $H$ is a confined subgroup of $\Sym(\Omega)$;
		\item \label{item-confbyalt} $H$ is confined by $\Alt_f(\Omega)$;
		\item \label{item-nicesbgp}  $\Omega_{H,f}$ is finite, and there exists an $H$-invariant partition $\Omega_{H,\infty} = \Omega_1 \cup \ldots \cup  \Omega_k$ of $\Omega_{H,\infty} $ into infinite subsets $\Omega_1, \ldots, \Omega_k$ such that $H$ contains $\mathrm{Alt}_f(\Omega_1) \times \ldots \times \mathrm{Alt}_f(\Omega_k)$.
	\end{enumerate} 
\end{thm}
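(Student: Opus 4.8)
The plan is to prove the cycle of implications \ref{item-confbysym}$\Rightarrow$\ref{item-confbyalt}$\Rightarrow$\ref{item-nicesbgp}$\Rightarrow$\ref{item-confbysym}. The implication \ref{item-confbysym}$\Rightarrow$\ref{item-confbyalt} is immediate, since $\Alt_f(\Omega)\le\Sym(\Omega)$ and hence any confining subset for $(H,\Sym(\Omega))$ is \emph{a fortiori} a confining subset for $(H,\Alt_f(\Omega))$. For \ref{item-nicesbgp}$\Rightarrow$\ref{item-confbysym}, since $H$ contains $N=\Alt_f(\Omega_1)\times\cdots\times\Alt_f(\Omega_k)$ it suffices, by the remark following the definition of confinement, to confine $N$. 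Writing $m=|\Omega_{H,f}|$, I would fix a finite set $F\subseteq\Omega$ with $|F|=m+2k+1$ and let $P$ be the finite set of all $3$-cycles supported in $F$. For any $g\in\Sym(\Omega)$ the set $g\Omega_{H,f}$ has $m$ points, so at least $2k+1$ points of $F$ lie in $g\Omega_{H,\infty}=g\Omega_1\sqcup\cdots\sqcup g\Omega_k$; by pigeonhole some $g\Omega_i$ contains three points of $F$, and the corresponding $3$-cycle lies both in $\Alt_f(g\Omega_i)\le gNg^{-1}$ and in $P$. Thus $gNg^{-1}\cap P\neq\emptyset$ for every $g$, so $N$, and hence $H$, is confined by $\Sym(\Omega)$.

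It remains to prove \ref{item-confbyalt}$\Rightarrow$\ref{item-nicesbgp}, which is the heart of the matter. Fix a confining subset $P$ for $(H,\Alt_f(\Omega))$, with $r=|P|$. Since $\Alt_f(\Omega)$ is highly transitive, Lemma \ref{lem-conf-ht} applies with $G=\Alt_f(\Omega)$: it gives that $\Omega_{H,f}$ is finite (\ref{item-conf-sym-orb}) and that every thick $H$-invariant partition of $\Omega$ is finite (\ref{item-conf-sym-partition}). Applying the latter to the partition of $\Omega$ into $H$-orbits, after merging the finite $H$-invariant set $\Omega_{H,f}$ into one infinite orbit so as to restore thickness, shows that there are only finitely many infinite $H$-orbits. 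The task then reduces to producing inside $H$ a finitary alternating group on an infinite subset of each infinite orbit, and organizing these into the required partition.

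The key tool for producing alternating subgroups is the commutator lemma (Theorem \ref{thm-conf-abstract}), applied with $G=\Alt_f(\Omega)$, for which I would seek a displacement configuration $\{\Omega_\sigma\}_{\sigma\in P}$ made of \emph{infinite} subsets. For such a configuration one has $\rist_{\Alt_f(\Omega)}(\Omega_\sigma)=\Alt_f(\Omega_\sigma)$, which is non-trivial and, being an infinite alternating group, has all non-trivial conjugacy classes infinite and in particular of cardinality $>r$; so the hypotheses of Theorem \ref{thm-conf-abstract} are met. Its conclusion is a non-trivial subgroup $N\le\Alt_f(\Omega_\rho)$ contained in $H$ whose normalizer in $\Alt_f(\Omega_\rho)$ has index at most $r$. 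Since $\Omega_\rho$ is infinite, $\Alt_f(\Omega_\rho)$ is simple and has no proper subgroup of finite index, so this normalizer must be all of $\Alt_f(\Omega_\rho)$; then $N$ is normal, whence $N=\Alt_f(\Omega_\rho)\le H$. The main obstacle of the whole argument is precisely to arrange a confining subset all of whose elements $\sigma$ satisfy that $\sigma^2$ has infinite support, this being the hypothesis under which Lemma \ref{lem-displconf-perm} supplies an infinite displacement configuration. Elements of order two must first be removed using Lemma \ref{l-abelian-confined} together with the fact that $\Sym(\Omega)$ has no abelian subgroup confined by $\Alt_f(\Omega)$ (verified through Lemma \ref{lem-conf-ht}); the remaining obstructions, elements $\sigma$ with $\sigma^2$ of finite non-trivial support, force $\sigma^2$, and hence a non-trivial finitary permutation, into a conjugate of $H$ and thus into $H$, a case which I expect to treat by a separate finitary construction of an alternating subgroup.

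Once it is known that $H$ contains $\Alt_f(\Sigma)$ for some infinite $\Sigma$, I would assemble the partition as follows. Let $\mathcal S$ be the set of infinite $\Sigma\subseteq\Omega$ with $\Alt_f(\Sigma)\le H$; it is stable under unions of nested chains (a finitely supported permutation lies in a single member of the chain), so Zorn's lemma provides maximal elements. Using that $\langle\Alt_f(A),\Alt_f(B)\rangle=\Alt_f(A\cup B)$ whenever $A\cap B\neq\emptyset$, two distinct maximal elements are forced to be disjoint; and since $H$ permutes $\mathcal S$ and preserves maximality, the maximal elements form an $H$-invariant family of pairwise disjoint infinite sets, each contained in a single infinite orbit. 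As $H$ acts transitively on each orbit, the maximal blocks meeting a given orbit cover it, while Lemma \ref{lem-conf-ht}\ref{item-conf-sym-partition} bounds their total number, leaving finitely many blocks $\Omega_1,\ldots,\Omega_k$; their supports being disjoint, $H$ then contains $\Alt_f(\Omega_1)\times\cdots\times\Alt_f(\Omega_k)$. The second obstacle is to show these blocks exhaust $\Omega_{H,\infty}$, that is, that every infinite orbit carries an alternating subgroup of $H$: I would handle this by induction on the number of infinite orbits, passing to the confinement induced by $H$ on the union $W$ of the not-yet-covered orbits and applying Theorem \ref{thm-conf-abstract} to $H$ itself with a displacement configuration supported in $W$ (so that the alternating group produced, being supported in $\Omega_\rho\subseteq W$, is an honest subgroup of $H$ and no lifting is needed), the delicate point being to keep the confining subset active on $W$.
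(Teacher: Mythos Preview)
Your proofs of \ref{item-confbysym}$\Rightarrow$\ref{item-confbyalt} and \ref{item-nicesbgp}$\Rightarrow$\ref{item-confbysym} match the paper's. For \ref{item-confbyalt}$\Rightarrow$\ref{item-nicesbgp} you assemble the same ingredients (Lemmas \ref{lem-conf-ht}, \ref{lem-displconf-perm}, \ref{l-abelian-confined} and Theorem \ref{thm-conf-abstract}) but organise them differently: you apply the commutator lemma with an \emph{infinite} displacement configuration to obtain directly some $\Alt_f(\Omega_\rho)\le H$, and then build the partition via Zorn's lemma and an induction over orbits. The paper instead first takes a maximal refinement of the orbit decomposition into blocks $O_{i,j}$ on which the stabiliser acts primitively, and proves block by block that $H$ contains a finitary element supported in $O_{i,j}$; Jordan--Wielandt (Theorem \ref{thm-Jor-Wie}) then yields $\Alt_f(O_{i,j})\le H$.

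Your approach has a genuine gap at the induction step. To cover a not-yet-treated union of orbits $W$, you need a displacement configuration for a confining set lying inside $W$. But the only way to move a given confining set is to conjugate it by some $g\in\Alt_f(\Omega)$, and such $g$ is finitary: it changes each infinite $\Sigma_i$ in only finitely many points, so you cannot force the $\Sigma_i$ into $W$. The same issue undermines your treatment of the $Q_0$-case: even if some conjugate of an element of $Q_0$ lands in $H$, you obtain a single finitary element of $H$, supported in one orbit, with no mechanism to produce one in \emph{each} orbit.

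The paper's fix is exactly to pass from the infinite $\Sigma_i$ to \emph{finite} subsets $F_i\subset\Sigma_i$ of size $q+5$ (large enough that every non-trivial conjugacy class in $\Alt_f(F_i)$ exceeds $q=|Q_1|$). Finite sets \emph{can} be carried by $\Alt_f(\Omega)$ into a prescribed block $O$, along with the finite supports of the $\sigma^2$ for $\sigma\in Q_0$. Working by contradiction on a block $O$ assumed to contain no finitary element of $H$, the $Q_0$-part is then ruled out for $\Alt_f(O)$-conjugates, so $Q_1$ alone confines $(H,\Alt_f(O))$; Theorem \ref{thm-conf-abstract} applied with the finite $F_i\subset O$ yields a non-trivial subgroup of $\Sym(F_i)$ in $H$, the desired contradiction. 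Thus the passage to finite displacement sets is precisely what makes the block-by-block localisation work, and replaces both your Zorn argument and your unfinished induction.
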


\begin{remark}
$\ref{item-confbysym} \implies \ref{item-confbyalt}$ is trivial and $\ref{item-nicesbgp}\Rightarrow \ref{item-confbysym}$ is an easy verification (see below).
The main content of the theorem is the implication $\ref{item-confbyalt} \implies \ref{item-nicesbgp}$. Theorem 1 in
\cite{Se-Za-alt} provides a characterization of the confined subgroups $H$ of $\Alt_f(\Omega)$, which turns out to be the same as the one obtained in Theorem  \ref{thm-confi-SYM}. In other words, that theorem gives the implication $\ref{item-confbyalt} \implies \ref{item-nicesbgp}$ \textit{within the group $\Alt_f(\Omega)$}. The argument there uses in an essential way the fact that all elements of $H$ have finite support, and hence does not apply to the situation of Theorem \ref{thm-confi-SYM}. 
\end{remark}

\begin{prop} \label{prop-confsym-precise}
	Let $H \leq \Sym(\Omega)$ be a subgroup that is confined by $\Alt_f(\Omega)$. Let $P$ be a confining subset for $(H,\Alt_f(\Omega))$, and $r$ the number of elements of $P$ and $r_1$ the number of elements of $P$ with finite support.  Then $\Omega_{H,f}$ is finite, and there exists an $H$-invariant partition $\Omega_{H,\infty} = \Omega_1 \cup \ldots \cup  \Omega_k$ of $\Omega_{H,\infty} $ into infinite subsets $\Omega_1, \ldots, \Omega_k$ with $k \leq \max (1,r_1)$, such that $H$ contains $\mathrm{Alt}_f(\Omega_1) \times \ldots \times \mathrm{Alt}_f(\Omega_k)$. If in addition $r_1=0$ then  $k=1$ and $| \Omega_{H,f}|  =  | \Omega \setminus \Omega_{1}| \leq r-1$. 
\end{prop}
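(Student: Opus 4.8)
The plan is to dispose of the finiteness assertions immediately with Lemma~\ref{lem-conf-ht}, to reduce everything else to producing finitary alternating groups inside $H$, and to recover the count $k\le\max(1,r_1)$ at the very end from the same lemma. Since $A:=\Alt_f(\Omega)$ is itself highly transitive, Lemma~\ref{lem-conf-ht} applies verbatim with $G=A$: item~\ref{item-conf-sym-fix} bounds the fixed points of $H$ by $r-1$, item~\ref{item-conf-sym-orb} gives that $\Omega_{H,f}$ is finite (the first assertion), and item~\ref{item-conf-sym-partition} controls the infinite blocks of any $H$-invariant thick partition of $\Omega$. I would keep the last item in reserve: once $\Omega_{H,\infty}$ is shown to split $H$-invariantly into infinite pieces $\Omega_1,\dots,\Omega_k$ with $\prod_i\Alt_f(\Omega_i)\le H$, I would manufacture a thick $H$-invariant partition of $\Omega$ whose infinite blocks are exactly the $\Omega_i$ (absorbing the finitely many points of $\Omega_{H,f}$ into finite blocks, with minor care for fixed points) and read off $k\le r_1$ when $r_1\ge1$, together with $k=1$ and $|\Omega\setminus\Omega_1|=|\Omega_{H,f}|\le r-1$ when $r_1=0$, straight from Lemma~\ref{lem-conf-ht}\ref{item-conf-sym-partition}. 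So the whole difficulty is the existence of the alternating subgroups.

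The engine for existence is Theorem~\ref{thm-conf-abstract}. For $G=A$ one has $\rist_A(\Sigma)=\Alt_f(\Sigma)$, and for $\Sigma$ infinite every non-trivial conjugacy class of $\Alt_f(\Sigma)$ is infinite, hence of cardinality $>r$; thus the rigid-stabilizer hypotheses of Theorem~\ref{thm-conf-abstract} hold automatically as soon as the configuration sets are infinite. Lemma~\ref{lem-displconf-perm} furnishes such an infinite displacement configuration for any family of permutations whose squares have infinite support. The difficulty is that the given $P$ may contain elements violating this: involutions, more generally elements of infinite support with finitary square, and elements of finite support. I would first remove the involutions with Lemma~\ref{l-abelian-confined}: if they were unavoidable, $H$ would contain an abelian subgroup confined by $A$, which is impossible, because high transitivity of $A$ scatters the conjugates of any fixed non-trivial permutation over infinitely many distinct permutations, so no finite confining subset can exist for an abelian group. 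Applied then to an infinite-support element $\sigma$ with finitary square, which acts as an involution on the corona $\beta\Omega\setminus\Omega$ entering the proof of Lemma~\ref{lem-displconf-perm}, the same obstruction (now read modulo $\Sym_f(\Omega)$) removes these as well. For the surviving infinite-support elements I would apply Lemma~\ref{lem-displconf-perm} and Theorem~\ref{thm-conf-abstract} to obtain a non-trivial $N\le\Alt_f(\Omega_\rho)$ with $\Omega_\rho$ infinite and with normalizer of index $\le r$ in $\Alt_f(\Omega_\rho)$; as the latter is infinite and simple it has no proper finite-index subgroup, so $N$ is normal and therefore equals $\Alt_f(\Omega_\rho)$, giving $\Alt_f(\Omega_\rho)\le H$.

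The step I expect to be the main obstacle is the finite-support part of $P$, and its interaction with the rest. A confining element of small finite support (a short $3$-cycle being the extreme case) cannot enter any displacement configuration at all, since condition~\ref{item-all-disj} of Definition~\ref{def-displace} forces $\Omega_\sigma\subseteq\supp(\sigma)$; the commutator lemma is simply inapplicable to such elements, which is precisely the phenomenon that distinguishes the present situation from the finitary setting of Segal--Zalesskii. Here a finitary forcing argument must take over: for those $g$ for which confinement is witnessed by a finite-support element $\sigma$, one has $g^{-1}\sigma g\in H$, so $H$ accumulates finitary permutations with supports spread across $\Omega$ by the high transitivity of $A$, and a Jordan--Wielandt-type argument (Theorem~\ref{thm-Jor-Wie}) upgrades these into $\Alt_f$ of an infinite union of $H$-orbits. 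The genuine subtlety is that confinement is a \emph{disjunction} over all of $P$ — for each $g$ only \emph{some} element of $P$ is hit — so the finitary forcing and the commutator-lemma output cannot be run on cleanly separated confining sets; reconciling the two, and in doing so tying the number of blocks produced to $r_1$, is where the real work lies. I would organize this by an induction on $r_1$, the base case $r_1=0$ being exactly the all-infinite-support situation handled by the previous paragraph.

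Finally I would bootstrap from a single infinite $\Sigma$ with $\Alt_f(\Sigma)\le H$. For every $h\in H$ one has $\Alt_f(h\Sigma)=h\,\Alt_f(\Sigma)\,h^{-1}\le H$, and whenever two such translates overlap, the corresponding alternating groups generate $\Alt_f$ of their union; chaining overlapping translates defines an $H$-invariant equivalence relation on $\Omega_{H,\infty}$ whose classes $\Omega_1,\dots,\Omega_k$ are infinite and satisfy $\Alt_f(\Omega_i)\le H$, so that $\prod_i\Alt_f(\Omega_i)\le H$. To see that these classes exhaust $\Omega_{H,\infty}$, I would argue that an uncovered infinite $H$-orbit would, by the confinement restricted to the $H$-invariant complement, again produce an alternating group there, contradicting maximality. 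With the partition and the product of alternating groups in hand, the deferred application of Lemma~\ref{lem-conf-ht}\ref{item-conf-sym-partition} delivers $k\le\max(1,r_1)$ and, for $r_1=0$, the sharper $k=1$ with $|\Omega\setminus\Omega_1|\le r-1$, completing the proof.
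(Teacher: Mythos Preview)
Your broad outline is sensible, but the step where you dispose of elements with infinite support and finitary square is not justified: the sentence ``the same obstruction (now read modulo $\Sym_f(\Omega)$) removes these as well'' does not correspond to an actual application of Lemma~\ref{l-abelian-confined}. That lemma lets you avoid involutions, but an element $\sigma$ with $\sigma^2$ non-trivial and finitary is \emph{not} an involution of $\Sym(\Omega)$, and passing to the quotient by $\Sym_f(\Omega)$ destroys exactly the object ($\Alt_f(\Omega)$) that does the confining. So you cannot in general reach a confining subset all of whose elements have squares with infinite support, and Lemma~\ref{lem-displconf-perm} may simply not apply to the whole set. This gap propagates to your ``base case $r_1=0$'' as well: even when every element of $P$ has infinite support, nothing prevents some $\sigma\in P$ from having $\sigma^2$ finitary. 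For the same reason, your later bootstrap (``an uncovered infinite $H$-orbit would, by the confinement restricted to the complement, again produce an alternating group there'') implicitly reuses the same unproven cleaning step.

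The paper avoids this obstacle by organising the whole argument differently. It first uses Lemma~\ref{lem-conf-ht} to take a maximal $H$-invariant partition of $\Omega_{H,\infty}$ into blocks $O_{i,j}$ on which $H$ acts primitively, and then argues by contradiction \emph{block by block}: fixing a block $O$ and assuming that $H$ contains no non-trivial finitary element supported in $O$, one passes (via Lemma~\ref{l-abelian-confined}) to a confining $Q$ without involutions, writes $Q=Q_0\sqcup Q_1$ with $Q_0=\{\sigma: \sigma^2 \text{ finitary}\}$, and then conjugates $Q$ inside $\Alt_f(\Omega)$ so that each finitary $\sigma^2$ has support inside $O$. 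The contradiction hypothesis now \emph{itself} kills $Q_0$: if $g\sigma g^{-1}\in H$ with $\sigma\in Q_0$ and $g\in \Alt_f(O)$, then $g\sigma^2 g^{-1}\in H$ is a non-trivial finitary element supported in $O$, which is forbidden. Hence $Q_1$ alone confines $(H,\Alt_f(O))$, and one applies Theorem~\ref{thm-conf-abstract} not to the infinite $\Sigma_i$ but to finite subsets $F_i\subset\Sigma_i\cap O$ of size $|Q_1|+5$, yielding a non-trivial finitary element of $H$ with support in $F_i\subset O$ --- the contradiction. Jordan--Wielandt on the primitive block then gives $\Alt_f(O_{i,j})\le H$ immediately, with no bootstrap needed. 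In short, the missing idea in your plan is to run the argument inside a single primitive block under a contradiction hypothesis, which simultaneously neutralises both the finite-support elements and the finitary-square elements via their squares.
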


\begin{proof}
	According to Lemma \ref{lem-conf-ht}, the subgroup $H$ has finitely many orbits in $\Omega$. Let $O_1,\ldots,O_\ell$ be the infinite orbits of $H$. By Lemma \ref{lem-conf-ht} again, every $H$-invariant partition of $\Omega_{H,\infty}$ that does not restrict to the trivial partition into singletons to any $O_i$, has cardinality at most $\max (1,r_1)$. Let $\Omega_{H,\infty} = \cup_{i,j} O_{i,j}$ be such a partition with maximal cardinality, where $O_i = \cup_j O_{i,j}$ for all $i$. Since the partition $O_i = \cup_j O_{i,j}$ cannot be further refined, the stabilizer of $O_{i,j}$ in $H$ acts primitively on $O_{i,j}$. Note that this implies in particular that $H$ cannot be abelian (since  a maximal subgroup in an abelian group must have finite prime index). Note also that Lemma \ref{lem-conf-ht} also ensures $| \Omega \setminus \Omega_{H,\infty}| \leq r-1$ when $r_1 = 0$.

	The core of the proof consists in showing that for each block $O_{i, j}$ the subgroup $H$ contains a non-trivial element whose support is finite and contained in $O_{i, j}$. We proceed by contradiction, and assume that $O_{i, j} = O$ is a block that does not satisfy the desired property. According to Lemma \ref{l-abelian-confined} and the above remark that no subgroup that is confined by $\Alt_f(\Omega)$ can be abelian, we may find a subset $Q$ that is confining for $(H,\Alt_f(\Omega))$ such that $Q$ has no element of order two. We write $Q = Q_0 \sqcup Q_1$, where $Q_0$ is the set of elements of $Q$ such that $\sigma^2$ has finite support, and write $Q_1 = \left\{\sigma_1,\ldots,\sigma_q\right\}$. 
	
	According to Lemma \ref{lem-displconf-perm}, there exist infinite subsets $\Sigma_1,\ldots,\Sigma_q \subset \Omega$ that form a displacement configuration for $\left\{\sigma_1,\ldots,\sigma_q\right\}$. For each $i$ we choose a finite subset $F_i \subset \Sigma_i$ of cardinality $q+5$. Upon replacing $Q$ by a conjugate by an element of $\Alt_f(\Omega)$ (which is also a confining subset for $(H,\Alt_f(\Omega))$), we may assume that $\mathrm{supp}(\sigma^2) \subset O$ for all $\sigma \in Q_0$ and also that $F_i \subset O$ for all $i$. Since $Q$ is confining for $(H,\Alt_f(\Omega))$, clearly $Q$ is confining for $(H,\Alt_f(O))$. We claim that actually $Q_1$ is confining for $(H,\Alt_f(O))$. Indeed $gQ_0g^{-1} \cap H$ is empty for all $g \in \Alt_f(O)$, because otherwise if $g \sigma g^{-1}$ belongs to $H$ with $ \sigma \in Q_0$, then $g \sigma^2g^{-1}$ would be a non-trivial element of $H$ whose support is finite and contained in $O$, which contradicts our assumption on the block $O$. Hence $Q_1$ is confining for $(H,\Alt_f(O))$.

	Now by construction $F_1,\ldots,F_q$ form a displacement configuration for $Q_1$. Since $|F_i| = q+5 \geq 5$, every non-trivial element of $\Alt_f(F_i)$ has a conjugacy class of size $>q$. Hence it follows that all the assumptions of Theorem \ref{thm-conf-abstract} are satisfied. By applying the theorem (with $r=q$), we deduce that there is $i$ such that $H$ contains a non-trivial subgroup of $\Sym(F_i)$. In particular $H$ contains non-trivial elements with finite support in $O$ since $F_i \subset O$. We have therefore reached a contradiction with the definition of $O$. 
	
	It now follows easily that $H$ contains $\mathrm{Alt}_f(O_{i,j})$ for all $i,j$. Indeed, fix a non-trivial element $h_{i,j} \in H$ whose support is finite  and contained in $O_{i, j}$. Then the  stabilizer of $O_{i, j}$ in $H$ acts primitively on $O_{i,j}$, and its image in $\Sym(O_{i, j})$ contains the element of finite support $h_{i,j}$, so it follows from Jordan-Wielandt theorem  that this image actually contains $\mathrm{Alt}_f(O_{i,j})$ (Theorem  \ref{thm-Jor-Wie}). Now the rigid stabilizer $\rist_H(O_{i,j})$ is non-trivial  (it contains $h_{i,j}$) and is normalized by $\mathrm{Alt}_f(O_{i,j})$, so it follows that $H$ contains $\mathrm{Alt}_f(O_{i,j})$, as desired.
\end{proof}

\begin{proof}[Proof of Theorem \ref{thm-confi-SYM}]
$\ref{item-confbysym} \implies \ref{item-confbyalt}$ is trivial. $\ref{item-confbyalt} \implies \ref{item-nicesbgp}$ follows from Proposition \ref{prop-confsym-precise}. For $\ref{item-nicesbgp}  \implies   \ref{item-confbysym}$ assume that $H$ satisfies the condition in \ref{item-nicesbgp}. Consider a finite subset $F$ of $\Omega$ of cardinality $2k+1+|\Omega_{H, f}|$, and let $P$ be the set of $3$-cycles with support in $F$. We claim that $P$ is a confining subset for $H$. Indeed for every $g\in \Sym(\Omega)$, the intersection of $g(F)$ with $\Omega_{H, \infty}$ must have cardinality at least $2k+1$, and hence there exists $i$ such that $|g(F)\cap \Omega_i|\ge 3$. It follows that $gPg^{-1} \cap \Alt_f(\Omega_i)$ is not empty, and hence $gPg^{-1}\cap H$ is not empty either. 
\end{proof}

\begin{remark}
Theorem \ref{thm-confi-SYM} provides a description of the confined subgroups for an arbitrary partially finitary group $G$: a subgroup $H\le G$ is confined if and only if $H$ satisfies condition \ref{item-nicesbgp} of the theorem. A well-studied family of partially finitary groups are Houghton's groups $H_n$. Recall that if $\Gamma_n$ is the graph obtained by gluing at one point $n$ infinite one-sided rays $\Delta_1,\cdots, \Delta_n$, then $H_n$ is the group of all permutations of the vertex set of $\Gamma_n$ which, outside of a finite set, coincide with a translation on each ray $\Delta_i$. By the above result we have a description of all confined subgroups of the groups $H_n$.
\end{remark}

\subsection{Application to high transitivity} \label{s-sym-ht}

The following lemma shows that confined subgroups behave well with respect to a dense embedding of a group $G$ into a topological group $S$.

\begin{lem} \label{lem-conf-dense-embed}
Let $\varphi: G \to S$ be an injective group homomorphism from a group $G$ into a topological group $S$ such that the image of $G$ in $S$ is dense. If $H$ is a confined subgroup of $G$, then $K = \overline{\varphi(H)}$ is a confined subgroup of $S$, and if $P$ is a confining subset for $(H,G)$ then so is $\varphi(P)$ for $(K,S)$. 
\end{lem}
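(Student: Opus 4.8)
The plan is to use $\varphi(P)$ directly as the confining subset for $(K,S)$, and to leverage the closedness of $K$ together with the continuity of conjugation in $S$. First I would record that $\varphi(P)$ is a finite subset of $S$ consisting of non-trivial elements: this is immediate from the injectivity of $\varphi$ and the fact that $P$ consists of non-trivial elements of $G$. Thus $\varphi(P)$ is a legitimate candidate, and it remains to verify the confinement condition for the pair $(K,S)$, namely that $sKs^{-1}\cap\varphi(P)\neq\emptyset$ for every $s\in S$.

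The second step is to transport the confinement hypothesis across $\varphi$ on the dense subgroup $\varphi(G)$. For $g\in G$ the hypothesis provides $h\in H$ and $p\in P$ with $ghg^{-1}=p$; applying $\varphi$ yields $\varphi(g)\varphi(h)\varphi(g)^{-1}=\varphi(p)\in\varphi(P)$. Since $\varphi(h)\in\varphi(H)\subseteq K$, this shows $\varphi(g)K\varphi(g)^{-1}\cap\varphi(P)\neq\emptyset$. Introducing the set $W=\{\,s\in S:sKs^{-1}\cap\varphi(P)\neq\emptyset\,\}$, we have thus established $\varphi(G)\subseteq W$, and $\varphi(G)$ is dense in $S$ by hypothesis.

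The heart of the argument is then to prove that $W$ is closed, so that it must coincide with all of $S$. This is where the passage to the closure $K=\overline{\varphi(H)}$ is essential. I would write $W=\bigcup_{q\in\varphi(P)}W_q$ with $W_q=\{\,s\in S:s^{-1}qs\in K\,\}$, using that $q\in sKs^{-1}$ is equivalent to $s^{-1}qs\in K$. For each fixed $q$, the conjugation map $c_q:S\to S$, $c_q(s)=s^{-1}qs$, is continuous (a composition of inversion and multiplication in the topological group $S$), and $K$ is closed, so $W_q=c_q^{-1}(K)$ is closed. As $\varphi(P)$ is finite, $W$ is a finite union of closed sets, hence closed. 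A closed set containing the dense subset $\varphi(G)$ must be the whole space, so $W=S$, which is precisely the assertion that $\varphi(P)$ is confining for $(K,S)$, and therefore that $K$ is confined in $S$.

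I expect no serious obstacle here; the one point requiring care is to resist running the argument with $\varphi(H)$ in place of $K$, since $\varphi(H)$ need not be closed and the preimages $c_q^{-1}(\varphi(H))$ would then fail to be closed. The whole role of taking the closure is to make the conjugation preimages closed, which is exactly what allows the density of $\varphi(G)$ to propagate the confinement condition from $\varphi(G)$ to all of $S$.
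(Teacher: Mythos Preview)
Your proof is correct and follows essentially the same approach as the paper's. The paper phrases the argument in the quotient space $S/K$ (using that fixed-point sets of the $\varphi(\sigma)$ are closed in the Hausdorff space $S/K$, and that the image of $G/H$ is dense there), whereas you work directly in $S$ via the continuous conjugation maps $c_q$ and the closedness of $K$; unwinding the paper's fixed-point condition $\varphi(\sigma)\cdot sK=sK$ gives exactly your condition $s^{-1}\varphi(\sigma)s\in K$, so the two arguments are the same in substance.
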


\begin{proof}
	We have a $G$-equivariant map $\psi: G/H \to S/K$ with dense image. That $P$ is confining for $(H,G)$ means that the union $\cup_{\sigma \in P} \mathrm{Fix}_{G/H}(\sigma)$ is equal to the entire $G/H$. Here $\mathrm{Fix}_{G/H}(\sigma)$ is the set of fixed points of $\sigma$ for the left action of $G$ on $G/H$.  The coset space $S/K$ being Hausdorff,  the subset $\cup_{\sigma \in P} \mathrm{Fix}_{S/K}(\psi(\sigma))$ is closed in $S/K$, and it contains $\cup_{\sigma \in P} \psi(\mathrm{Fix}_{G/H}(\sigma)) = \psi(G/H)$. Therefore it is also dense, and hence it is equal to $S/K$. So $\varphi(P)$ is confining for $(K,S)$.
\end{proof}

The following covers Theorem \ref{thm-intro-main} from the introduction, which corresponds to the case where $G$ is not partially finitary.

\begin{thm} \label{thm-conf-ht-precise}
	Suppose that a group $G$ admits a faithful and highly transitive action on a set $\Omega$. Suppose that $H$ is a confined subgroup of $G$, that $P$ is confining for $(H,G)$, and let $r$ be the number of elements of $P$ and $r_1$ the number of elements of $P$ with finite support in $\Omega$. Then the following hold:
	\begin{enumerate}[label=\roman*)]
		\item \label{item-finorb} $\Omega_{H,f}$ is finite. Moreover if $r_1=0$ then $\Omega_{H,f}$ has cardinality at most $r-1$.
		\item  \label{item-infinorb} There exist infinite subsets $\Omega_1, \ldots,  \Omega_k $ with with $k \leq \max (1,r_1)$ such that $\Omega_{H,\infty} = \Omega_1 \cup \ldots \cup  \Omega_k$, and if $H_i$ is the stabilizer of $\Omega_i$ in $H$, then the action of $H_i$ on $\Omega_i$ is highly transitive  for all $i$.
	\end{enumerate}
So if $G$ is not partially finitary, then $\Omega_{H,f}$ has cardinality at most $r-1$, and the action of $H$ on $\Omega \setminus \Omega_{H,f}$ is faithful and highly transitive
\end{thm}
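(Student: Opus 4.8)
The plan is to reduce everything to the already-established description of the confined subgroups of $\Sym(\Omega)$. Since the action is faithful and highly transitive, it is the same datum as an injective homomorphism $\varphi\colon G\to\Sym(\Omega)$ with dense image for the topology of pointwise convergence, and I identify $G$ with $\varphi(G)$. As $H$ is confined in $G$ with confining subset $P$, Lemma~\ref{lem-conf-dense-embed} shows that $K:=\overline{H}\le\Sym(\Omega)$ is confined in $\Sym(\Omega)$ with the same confining subset $P$; in particular the parameters $r$ and $r_1$ are unchanged. Being confined by all of $\Sym(\Omega)$, $K$ is a fortiori confined by $\Alt_f(\Omega)$, so Proposition~\ref{prop-confsym-precise} applies to $K$ and yields: $\Omega_{K,f}$ is finite, there is a $K$-invariant partition $\Omega_{K,\infty}=\Omega_1\cup\cdots\cup\Omega_k$ into infinite blocks with $k\le\max(1,r_1)$ and $K\supseteq\Alt_f(\Omega_1)\times\cdots\times\Alt_f(\Omega_k)$, together with the sharp bounds $k=1$ and $|\Omega\setminus\Omega_1|\le r-1$ when $r_1=0$.

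Two observations move this back to $H$. First, $H$ and $K$ have the same orbits on $\Omega$: for $x\in\Omega$ and $g\in K$, the basic open set $\{f:f(x)=g(x)\}$ is a neighbourhood of $g$, hence meets the dense subgroup $H$, so $g(x)\in H\cdot x$. Consequently $\Omega_{H,f}=\Omega_{K,f}$ and $\Omega_{H,\infty}=\Omega_{K,\infty}$, which gives part~\ref{item-finorb} and the partition in part~\ref{item-infinorb}, the latter being $H$-invariant since $H\le K$. Second, I transfer the high transitivity carried by $\Alt_f(\Omega_i)\le K$ down to the stabiliser $H_i$. Given distinct tuples $(a_1,\dots,a_m)$ and $(b_1,\dots,b_m)$ in $\Omega_i$, high transitivity of $\Alt_f(\Omega_i)$ on the infinite set $\Omega_i$ furnishes $\sigma\in\Alt_f(\Omega_i)\le K$ with $\sigma(a_j)=b_j$ for all $j$; the neighbourhood $\{f:f(a_j)=b_j\ \forall j\}$ of $\sigma$ then meets $H$, producing $h\in H$ with $h(a_j)=b_j$. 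The crux is that such an $h$ automatically stabilises $\Omega_i$: as the partition is $H$-invariant, $h(\Omega_i)$ is a block, and it contains $h(a_1)=b_1\in\Omega_i$, so $h(\Omega_i)=\Omega_i$ by disjointness of the blocks. Thus $h\in H_i$ realises the prescribed tuple, and $H_i$ acts highly transitively on $\Omega_i$.

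For the final assertion, suppose $G$ is not partially finitary. Then its (faithful, highly transitive) image contains no non-trivial finitary permutation---otherwise it would contain $\Alt_f(\Omega)$---so every element of $P$ has infinite support and $r_1=0$. By the sharp case of Proposition~\ref{prop-confsym-precise} we get $k=1$ and $|\Omega_{H,f}|=|\Omega\setminus\Omega_1|\le r-1$; since $\Omega_1=\Omega_{H,\infty}$ is preserved by all of $H$, we have $H_1=H$, and the previous paragraph shows that $H$ itself acts highly transitively on $\Omega\setminus\Omega_{H,f}=\Omega_{H,\infty}$. This action is faithful: any $h\in H$ acting trivially on $\Omega_{H,\infty}$ is supported on the finite set $\Omega_{H,f}$, hence is a finitary permutation in $G$, hence trivial. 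I expect the only genuinely delicate point of this argument to be the block-stabilisation step above, which is what upgrades density of $H$ in $K$ from merely matching finite tuples to matching them by elements that preserve the block $\Omega_i$; all the hard structural work is already packaged in Proposition~\ref{prop-confsym-precise}.
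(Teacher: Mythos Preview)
Your proof is correct and follows essentially the same route as the paper: pass to the closure $K=\overline{H}$ in $\Sym(\Omega)$ via Lemma~\ref{lem-conf-dense-embed}, apply Proposition~\ref{prop-confsym-precise} to $K$, and then pull the conclusions back to $H$ using that $H$ and $K$ share orbits. Your explicit block-stabilisation argument (showing that an $h\in H$ approximating $\sigma\in\Alt_f(\Omega_i)$ automatically lies in $H_i$) actually spells out a step the paper leaves implicit when it asserts that ``the action of $H_i$ on $\Omega_i$ is highly transitive''; and your observation that $r_1=0$ is automatic when $G$ is not partially finitary is equivalent to the paper's remark that one ``may remove the elements of $P$ with finite support'', since there are none to remove.
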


\begin{proof}
We write $S = \Sym(\Omega)$, $\varphi: G \to S$ the homomorphism associated to the action of $G$ on $\Omega$, and $K = \overline{\varphi(H)}$. Note that $H$ and $K$ have the same orbits in $\Omega$. By Lemma \ref{lem-conf-dense-embed} the subset $\varphi(P)$ is confining for $(K,S)$. Hence we are in position to apply Proposition \ref{prop-confsym-precise}. It follows from the proposition that $\Omega_{K,f}$ is finite, and there exist infinite subsets $\Omega_1, \ldots,  \Omega_k $ with with $k \leq \max (1,r_1)$ such that $\Omega_{K,\infty} = \Omega_1 \cup \ldots \cup  \Omega_k$, and $K$ contains $\Sym(\Omega_i)$ for all $i$ (since $K$ is closed). Therefore $\Omega_{H,f} = \Omega_{K,f}$ is indeed finite, and the action of $H_i$ on $\Omega_i$ is  highly transitive. Finally when $r_1=0$ we indeed have $k=1$ and $| \Omega_{H,f}|  \leq r-1$ by the proposition. This shows \ref{item-finorb} and \ref{item-infinorb}. 

For the last statement, if $G$ is not partially finitary then we may very well remove the elements of $P$ with finite support, and obtain a subset that is still confining for $(H,G)$.  By the previous paragraph the action of $H$ on $\Omega_{H,\infty}$ is highly transitive, and it is also faithful since otherwise $G$ would contain non-trivial elements with finite support, which is not the case by assumption. 
\end{proof}

\begin{remark} \label{remark-fullgp}
The bound $| \Omega_{H,f}|  \leq r-1$ in Theorem  \ref{thm-conf-ht-precise} is optimal. In order to see this, consider a group $\Gamma$ of homeomorphisms of the Cantor space $Y$ acting minimally on $Y$, and let $G$ be the topological full group of $\Gamma$. We fix a point $y \in Y$, and denote by $\Omega$ the orbit of $y$. Then one easily checks that the action of $G$ on $\Omega$ is faithful and highly transitive. Moreover the stabilizer subgroup $H$ of $y$ is confined in $G$, with a confining subset of cardinality $2$ (indeed if $P$ consists of two non-trivial elements with disjoint supports, then for every $g$ at least one element of $gPg^{-1}$ must fix $y$, and hence $H\cap gP g^{-1}\neq \varnothing$). So in this example $H$ has exactly one fixed point  in $\Omega$. Similarly by considering the stabilizer $H$ of $r-1$ fixed points in $G$, which admits a confining subset of cardinality $r$, we see that the bound $| \Omega_{H,f}|  \leq r-1$ is optimal.
\end{remark}

\begin{remark}
In the case where $G$ is partially finitary, the action of $H$ on $\Omega_{H,\infty}$ need not be highly transitive, and it is indeed necessary to pass to a partition of $\Omega_{H,\infty}$. This is  illustrated by the confined subgroups of the group $\Alt_f(\Omega)$ described in Theorem \ref{thm-confi-SYM}. 
\end{remark}

\section{Application to groups of PL-homeomorphisms} \label{sec-appl}

In this section we give applications of Corollary \ref{cor-intro-criteria} to groups of piecewise linear homeomorphisms of suspension flows. First in \S \ref{subsec-PL} we prove an auxiliary result on groups of piecewise linear homeomorphisms of an interval. In \S \ref{s-PL-flows} we recall the setting of \cite{MB-T-flows} and prove Theorem \ref{t-intro-PL}. Finally in \S \ref{subsec-variant} we discuss a family of examples of groups covered by this theorem and prove that they do not satisfy the obstructions to high transitivity described in \S \ref{elementary-obs}. 

\subsection{Subquotients of $PL(I)$} \label{subsec-PL}

In the sequel we let $I=[0, 1]$ be the compact unit interval in the real line, and denote by $\PL(I)$ the group of piecewise linear orientation preserving homeomorphisms of $I$. These are the homeomorphisms $g$ of $I$ fixing the extremities of $I$ and such that there are $0= x_0 < \ldots < x_n = 1$ and real numbers $a_0,\ldots, a_{n-1}, b_0,\ldots, b_{n-1}$ such that for all $i$ and all $x \in [x_i,x_{i+1}]$ one has $g(x) = a_i x + b_i$. 

Recall that Brin and Squier have shown that the group $\PL(I)$  does not contain any non-abelian free subgroup \cite{Brin-Squier}. Following a similar strategy, we prove a slight generalisation of this result, which places constraints on the subquotients of $\PL(I)$. Recall also that two subgroups $M,N$ of a group $G$ are \textbf{commuting subgroups} if every element of $M$ commutes with every element of $N$. Given a group $Q$ we denote by $Q'$ its commutator subgroup. 


\begin{prop} \label{prop-metab-subq}
	Let $Q$ be a non-trivial finitely generated subquotient of $\PL(I)$. Suppose that $Q$ does not admit two non-trivial commuting normal subgroups. Then $Q'$ is not finitely generated.
\end{prop}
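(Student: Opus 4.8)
The plan is to follow the Brin--Squier strategy adapted to the subquotient setting, exploiting the structure of $\PL(I)$ where elements have supports that decompose into finitely many intervals on which the homeomorphism is nontrivial. Write $Q = N/M$ where $N \le \PL(I)$ and $M \trianglelefteq N$, and suppose for contradiction that $Q$ is finitely generated with $Q'$ also finitely generated, while $Q$ has no two nontrivial commuting normal subgroups. The key structural feature to leverage is that for $f \in \PL(I)$, the support $\supp(f)$ is a finite union of open intervals, and that for two elements whose supports are disjoint, the corresponding elements commute. I would first reduce to understanding the commuting structure via supports: if $f, g \in \PL(I)$ have $\supp(f) \cap \supp(g) = \varnothing$ then $[f,g] = 1$.

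First I would set up the Brin--Squier pingpong-type argument at the level of germs. The heart of the Brin--Squier theorem is that in $\PL(I)$, given finitely many elements, one can find a subinterval on which the germ behavior is controlled, and that nested commutators eventually become trivial because the ``active'' region of an iterated commutator shrinks. Concretely, for $f_1, \dots, f_n \in \PL(I)$, consider the leftmost breakpoint of the support and analyze the germ at the infimum of the union of supports; nested commutators $[\dots[[f_{i_1}, f_{i_2}], f_{i_3}], \dots]$ eventually have empty support near any accumulation point of breakpoints. The goal is to show that if $Q'$ were finitely generated, the derived series or the iterated commutator structure of $N$ modulo $M$ would have to collapse in a way that forces either triviality or the existence of two commuting normal subgroups.

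The main step is to produce the two commuting normal subgroups, contradicting the hypothesis, under the assumption that $Q'$ is finitely generated. Here is the mechanism I expect to use: finite generation of $Q'$ means the supports of a generating set of $Q'$ (lifted to $N$, so modulo $M$) are ``uniformly bounded away'' from some endpoint, say there is an interval $[0,\epsilon)$ disjoint from the essential supports. Then one can split $I$ into two regions and use rigid stabilizers $\rist$ of disjoint subintervals to build two subgroups $A, B \le Q$ that commute (disjoint supports) and that one can arrange to be normal (or to generate normal subgroups) in $Q$. The delicate point is passing from $N \le \PL(I)$ to the quotient $Q = N/M$: supports are not well-defined in $Q$, so I would work with a carefully chosen lift and track how $M$ interacts with the support decomposition, perhaps using that a nontrivial element of $\PL(I)$ has a well-defined germ at some breakpoint.

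The hardest part will be controlling the quotient by $M$: in $\PL(I)$ itself the Brin--Squier argument is clean, but after quotienting, elements with disjoint supports in $N$ remain commuting in $Q$, yet an element that is trivial in $Q$ (i.e. lies in $M$) may have large support in $\PL(I)$, so one cannot directly read off the commuting-normal-subgroup structure from supports. I would manage this by arguing that if $Q'$ is finitely generated, then a suitable ``germ at an endpoint'' homomorphism or a transverse support-localization survives in the quotient, producing a nontrivial abelianization contribution localized near an endpoint that is centralized by a complementary localized piece. The interplay between the finite generation of $Q'$ and the no-two-commuting-normal-subgroups hypothesis is what must force the contradiction, so the technical core is showing that finite generation of $Q'$ localizes the ``complexity'' of $Q$ to a region whose complement then furnishes a commuting normal subgroup.
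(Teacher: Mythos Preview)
Your proposal has the right intuition---lifting generators of $Q'$ to the commutator subgroup of a preimage $H\le\PL(I)$ so that they have trivial germs at the endpoints, hence compactly contained supports---but the argument as written has two genuine gaps, and the description of the Brin--Squier mechanism is off.

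First, you never reduce to the case where the support of $H$ is a single open interval, and this reduction is exactly where the hypothesis ``no two commuting normal subgroups'' enters in the paper. The paper's Lemma~\ref{lem-reduce-compo} argues by minimizing the number of connected components of $\supp(H)$ among subgroups of $\PL(I)$ surjecting onto $Q$: if there are at least two components, restricting to one component versus its complement gives two commuting normal subgroups $N_1,N_2\le H$, and the images of these in $Q$ are commuting normal subgroups, forcing one to be trivial, which means $Q$ is already a quotient of a group with fewer components. Without this reduction your Brin--Squier step cannot run, because you need $H$ to act without global fixed points on a single interval $(c,d)$ in order to find $g\in H$ that pushes the supports of the $h_i$ off themselves.

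Second, your endgame is wrong. You try to split $I$ at the edge of the supports and build rigid stabilizers $A,B$ that are normal in $Q$, but there is no reason those images are nontrivial or normal in $Q$; and your worry about the quotient by $M$ obscuring supports is a symptom of this being the wrong construction. The correct move (the paper's Lemma~\ref{lem-one-compo}) is: once $\supp(H)=(c,d)$, the lifts $h_1,\dots,h_n\in H'$ of generators of $Q'$ have closure of support inside $(c,d)$, so some $g\in H$ satisfies $g(\supp h_i)\cap\supp h_j=\varnothing$ for all $i,j$. Hence $\pi(g)Q'\pi(g)^{-1}$ commutes with $Q'$; but $Q'$ is normal, so this says $Q'$ is abelian and $Q$ is metabelian. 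Finally, a nontrivial abelian normal subgroup $N$ satisfies $[N,N]=1$, so the hypothesis forbids it, and $Q$ metabelian with $Q$ nontrivial is impossible. The quotient by $M$ causes no difficulty here: commuting lifts in $H$ give commuting images in $Q$, which is all that is needed. Your talk of ``nested commutators shrinking active regions'' and germ homomorphisms is not the Brin--Squier argument and does not lead anywhere.
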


Note that groups containing non-abelian free subgroups admit all countable groups as subquotients, so this restriction on the subquotients of $\PL(I)$ is indeed formally a generalisation of the Brin--Squier result. Before going into the proof, we isolate the following consequence.

\begin{cor} \label{c-PL-subquotient}
	Every finite subquotient of $\PL(I)$  is solvable.
\end{cor}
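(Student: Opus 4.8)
The plan is to derive Corollary \ref{c-PL-subquotient} as an immediate consequence of Proposition \ref{prop-metab-subq}. Suppose for contradiction that $Q$ is a finite subquotient of $\PL(I)$ that is not solvable. Among all finite non-solvable subquotients of $\PL(I)$, choose $Q$ of minimal order. Since $Q$ is finite, it is in particular finitely generated, so Proposition \ref{prop-metab-subq} will apply to it provided we can rule out the hypothesis that $Q$ admits two non-trivial commuting normal subgroups.

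First I would argue that a minimal counterexample $Q$ cannot have two non-trivial commuting normal subgroups $M, N$. Indeed, if it did, then (as $M$ and $N$ commute elementwise and are both non-trivial) one analyzes the quotients $Q/M$ and $Q/N$, or more efficiently $Q/(M\cap N)$ embeds into $Q/M \times Q/N$. Both $Q/M$ and $Q/N$ are proper quotients of $Q$ (since $M,N$ are non-trivial normal subgroups), hence are finite subquotients of $\PL(I)$ of strictly smaller order; by minimality they are solvable. A subgroup of a product of two solvable groups is solvable, so $Q/(M\cap N)$ is solvable. Moreover $M\cap N$ is a normal subgroup contained in the center of $\langle M, N\rangle$ in the relevant sense — more to the point, one checks that the commuting hypothesis forces $M\cap N$ to be abelian (as $M\cap N \le M$ commutes with $M\cap N \le N$ elementwise), so $M\cap N$ is solvable as well. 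An extension of a solvable group by a solvable group is solvable, whence $Q$ is solvable, contradicting the choice of $Q$.

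Having established that a minimal non-solvable finite subquotient $Q$ has no two non-trivial commuting normal subgroups, the hypotheses of Proposition \ref{prop-metab-subq} are satisfied, and I would invoke it to conclude that $Q'$ is not finitely generated. But $Q$ is finite, so $Q'$ is finite and therefore finitely generated, a contradiction. This rules out the existence of any non-solvable finite subquotient, establishing the corollary.

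The only genuinely delicate point will be verifying carefully that the commuting-subgroups hypothesis of Proposition \ref{prop-metab-subq} can indeed be excluded for a minimal counterexample, i.e.\ the extension argument showing that $Q$ is solvable once both proper quotients $Q/M$ and $Q/N$ are. This is a routine fact about solvable groups (solvability is closed under subgroups, quotients, and extensions), but one should be slightly careful to confirm that $M\cap N$ is abelian — which follows directly from $M$ and $N$ being commuting subgroups — rather than merely solvable of bounded derived length. Everything else is formal: finiteness makes $Q'$ automatically finitely generated, so the conclusion of the proposition is immediately contradicted.
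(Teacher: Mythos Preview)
Your proof is correct. Both your argument and the paper's follow the same overall strategy: take a minimal non-solvable finite subquotient $Q$, apply Proposition \ref{prop-metab-subq}, and derive a contradiction from finiteness. The only difference lies in how the hypothesis ``no two non-trivial commuting normal subgroups'' is secured. The paper passes to a simple quotient $S = Q/N$ (with $N$ a maximal proper normal subgroup), where the hypothesis is automatic for non-abelian $S$; it then handles the abelian case separately via minimality applied to $N$. You instead verify the hypothesis directly for $Q$ itself, using minimality on the proper quotients $Q/M$ and $Q/N$ together with the observation that $M\cap N$ is abelian. Your route avoids invoking simple quotients and is arguably more self-contained; the paper's route makes the verification of the hypothesis trivial at the cost of an extra case split. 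Both are short and essentially equivalent in spirit.
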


\begin{proof}[Proof of Corollary \ref{c-PL-subquotient}]
	Assume by contradiction that 	$Q$ is a non-solvable finite subquotient of $\PL(I)$ whose cardinality $|Q|$ is minimal. Let $N$ be a maximal proper normal subgroup of $Q$ and $S=Q/N$ the associated simple quotient. Suppose $S$ is not abelian. Then one can apply Proposition \ref{prop-metab-subq} to $S$, and we deduce that $S'=S$ is not finitely generated. Since $S$ is finite by assumption, this is absurd. Hence $S$ must be abelian of prime order. Since we chose $Q$ with minimal cardinality, $N$ must be solvable, and thus $Q$ is solvable as well, which is a contradiction. \end{proof}

Let us now turn to the proof of Proposition \ref{prop-metab-subq}.
Recall from \S \ref{s-notations-stab} that we define the support of an element $g\in \PL(I)$ as the set $\supp(g)=\{x\in I \colon gx \neq x\}$. Note that $\supp(g)$ is an open set; and moreover the definition of $\PL(I)$ implies that $\supp(g)$ has only finitely many connected components. Given  $H\le \PL(I)$, we define its support as $\supp(H):=\bigcup_{h\in H} \supp(h)$.  When $H$ is finitely generated it is enough to take the union for $h$ in a finite generating set, so in this case $\supp(H)$ has only finitely many connected components.

\begin{lem} \label{lem-reduce-compo}
	Let $Q$ be a finitely generated subquotient of $\PL(I)$. Suppose that $Q$ does not admit two non-trivial commuting normal subgroups. Then there exists a subgroup $L$ of $\PL(I)$ that admits $Q$ as a quotient and such that the support of $L$ is an open interval.
\end{lem}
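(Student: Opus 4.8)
The plan is to realize $Q$ as a quotient of a \emph{finitely generated} subgroup of $\PL(I)$, and then to induct on the number of connected components of the support of that subgroup, using the hypothesis on $Q$ to peel off one component at a time. First I would record the standard reduction to a finitely generated subgroup: writing $Q = H/N$ with $H \le \PL(I)$ and $N \trianglelefteq H$, I lift a finite generating set of $Q$ to elements of $H$ and let $L_0$ be the subgroup they generate. The quotient map restricts to a surjection $L_0 \twoheadrightarrow Q$, so $Q = L_0/N_0$ with $N_0 = L_0 \cap N \trianglelefteq L_0$ and $L_0$ finitely generated. Since $L_0$ is finitely generated, $\supp(L_0)$ is a finite union of open intervals $J_1, \ldots, J_m$ (its connected components), and every element of $L_0$ preserves each $J_i$: being an orientation-preserving homeomorphism that fixes the endpoints of $J_i$ (which lie in $I \setminus \supp(L_0) \subseteq \fix(L_0)$), it must map $J_i$ onto itself.

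I would then argue by induction on $m$, with $Q$ fixed throughout. If $m = 1$, the support of $L_0$ is already an open interval and one takes $L = L_0$. For $m \ge 2$, set $A = J_1$ and $B = J_2 \cup \cdots \cup J_m$, and consider the two rigid stabilizers $R_A = \rist_{L_0}(A)$ and $R_B = \rist_{L_0}(B)$, i.e.\ the elements of $L_0$ supported in $A$, respectively in $B$. Because $A$ and $B$ are each unions of components and hence invariant under $L_0$ (by the previous paragraph), the subgroups $R_A$ and $R_B$ are normal in $L_0$; and since $A \cap B = \varnothing$, they commute elementwise. Their images $\overline{R_A}, \overline{R_B}$ in $Q$ are therefore normal subgroups of $Q$ that commute, so by hypothesis at most one of them is non-trivial. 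Equivalently, either $R_A \subseteq N_0$ or $R_B \subseteq N_0$.

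The two cases give the dichotomy that drives the induction. Restriction to $\overline{A}$, followed by extension by the identity on $I \setminus A$, defines a homomorphism $L_0 \to \PL(I)$ with kernel exactly $R_B$ and image $L_A$ supported on the single interval $A$; symmetrically, restriction to $B$ has kernel $R_A$ and image $L_B \le \PL(I)$ supported on $B$. If $R_B \subseteq N_0$, then $Q$ factors through $L_A$, so $Q$ is a quotient of $L_A$, whose support $A$ is an open interval, and we are done. If instead $R_A \subseteq N_0$, then $Q$ is a quotient of the finitely generated group $L_B \le \PL(I)$, whose support $B$ now has only $m-1$ components, and the inductive hypothesis applied to $L_B$ produces the desired $L$.

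The main obstacle is the conceptual one of transferring information: the hypothesis speaks only about commuting normal subgroups of $Q$ itself, so the crux is to manufacture, purely from the disconnectedness of $\supp(L_0)$, a pair of commuting normal subgroups of $L_0$ — the rigid stabilizers of the two complementary unions of components — whose images in $Q$ are then forced to be mostly trivial. The points requiring care are verifying that these rigid stabilizers are genuinely normal in $L_0$ (which relies on each $J_i$ being $L_0$-invariant) and checking that quotienting by the one contained in $N_0$ both preserves the surjection onto $Q$ and strictly decreases the number of support components, so that the induction terminates.
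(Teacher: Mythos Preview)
Your argument is correct and follows essentially the same route as the paper: both isolate one connected component $J$ of the support and use that the rigid stabilizers of $J$ and of its complement are commuting normal subgroups whose images in $Q$ cannot both be non-trivial, thereby reducing the number of components. The only cosmetic difference is that the paper phrases this as a minimal-counterexample argument (choose $H$ with the fewest components and derive a contradiction) rather than an explicit induction.
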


\begin{proof}
Let $H$ be a subgroup of $\PL(I)$ that admits $Q$ as a quotient and such that the number $n$ of connected components of $\supp(H)$ is minimal. Observe that $Q$ is the quotient of a finitely generated subgroup of $\PL(I)$, so $n<\infty$.  Suppose for a contradiction that the statement is not true, that is to say $n \geq 2$. Let $J$ be a connected component of $\supp(H)$, and denote by $H_1$ the subgroup of $\PL(I)$ induced by the action of $H$ on $J$, and by $N_1$ the subgroup of $H$ consisting of elements that are supported in the complement of $J$. We also denote by $H_2$ the subgroup of $\PL(I)$ induced by the action of $H$ on the complement of $J$, and by $N_2$ the subgroup of $H$ consisting of elements that are supported in  $J$. Note that $H/N_i$ is isomorphic to $H_i$ for $i =1,2$.
	
	Let $N$ be a normal subgroup of $H$ such that $H/N$ is isomorphic to $Q$. By the choice of $H$, neither $H_1$ nor $H_2$ can admit $Q$ as a quotient since the number connected components of their support is $<n$. That means that neither $N_1$ nor $N_2$ is contained in $N$. Since  $N_1$ and $N_2$ commute, it follows that their images in $Q$ are commuting non-trivial normal subgroups, which is impossible by our assumption. Hence we have reached a contradiction, and hence $n$ must equal to $1$.
\end{proof}

The following lemma is based on the same argument as in \cite{Brin-Squier}. 

\begin{lem} \label{lem-one-compo}
	Let $H$ be a subgroup of $\PL(I)$ such that the support of $H$ is an open interval. Suppose that $H$ admits the group $Q$ as a quotient. Then for every finitely generated subgroup $K \leq Q'$, there exists $g \in Q$ such that $gKg^{-1}$ and $K$ commute. In particular if $Q'$ is finitely generated, then the group $Q$ is metabelian. 
\end{lem}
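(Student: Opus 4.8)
The plan is to reduce the statement to a purely geometric fact: that one can move the support of a finitely generated subgroup of $[H,H]$ off itself by an element of $H$. Write $J=\supp(H)=(a,b)$ and let $\pi\colon H\to Q$ be the quotient map. Since $\pi$ is surjective, $\pi([H,H])=Q'$, so given a finitely generated $K\le Q'$ I would lift a finite generating set of $K$ to elements $k_1,\ldots,k_m\in[H,H]$ and set $\tilde K=\langle k_1,\ldots,k_m\rangle\le[H,H]$, so that $\pi(\tilde K)=K$. It then suffices to find $g\in H$ with $g(\supp(\tilde K))\cap\supp(\tilde K)=\varnothing$: indeed, $g\tilde Kg^{-1}$ and $\tilde K$ would then have disjoint supports, hence commute in $H$, and applying $\pi$ shows that $\pi(g)K\pi(g)^{-1}$ commutes with $K$ in $Q$.

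First I would control the support of elements of $[H,H]$ via germ homomorphisms. Every $h\in H$ fixes $[0,a]\cup[b,1]$ pointwise, so near $a$ (resp.\ $b$) it is linear and fixes the endpoint; sending $h$ to the logarithm of its slope on the first linear piece to the right of $a$ (resp.\ to the left of $b$) defines homomorphisms $\gp,\gm\colon H\to(\R,+)$. Any $h\in[H,H]$ lies in the kernel of both, hence has slope $1$ next to each endpoint, and therefore coincides with the identity on a neighbourhood of $a$ and of $b$; thus $\overline{\supp(h)}\subset J$ is compact. Since $\supp(\tilde K)=\bigcup_i\supp(k_i)$ is a finite union of such sets, it lies in a compact subinterval $[a_0,b_0]\Subset J$.

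The heart of the argument is to produce $g\in H$ with $g(a_0)>b_0$, for then $g([a_0,b_0])\subset(b_0,b]$ is disjoint from $[a_0,b_0]$. For this I would consider $\beta=\sup\{h(a_0):h\in H\}$. Because each $h\in H$ is an increasing homeomorphism preserving $J$, a short computation (reindexing the orbit by $h_1h$ and using continuity and monotonicity of $h_1$) shows that $h_1(\beta)=\beta$ for every $h_1\in H$, i.e.\ $\beta$ is a global fixed point of $H$. As $a_0\in J=\supp(H)$ one has $\beta\in(a,b]$, and $\beta$ cannot lie in $(a,b)$ since every point there is moved by some element of $H$; hence $\beta=b$. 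As $b_0<b$, there is $g\in H$ with $g(a_0)>b_0$, as desired. I expect this orbit-supremum step to be the main obstacle: a naive attempt would try to displace the whole interval by a single ``large'' element, whereas the actual point is that monotonicity alone already forces the left endpoint $a_0$ to be pushable past $b_0$.

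Finally, the ``in particular'' clause follows by applying the first part to $K=Q'$, which is legitimate once $Q'$ is finitely generated: there is $g\in Q$ with $gQ'g^{-1}$ commuting with $Q'$, and since $Q'$ is normal in $Q$ we have $gQ'g^{-1}=Q'$, whence $[Q',Q']=1$ and $Q$ is metabelian.
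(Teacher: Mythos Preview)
Your proof is correct and follows essentially the same route as the paper's own argument: lift generators of $K$ to $H'$, observe via the endpoint-derivative homomorphisms that commutator elements have support compactly contained in $(a,b)$, and then push that compact support off itself by a single element of $H$. The only difference is that where the paper simply asserts ``since $H$ acts without fixed points in $(c,d)$, one can find $g\in H$ such that $g(x)\ge y$'', you spell this out via the orbit-supremum argument showing $\sup_{h\in H}h(a_0)=b$; this is a valid and slightly more detailed justification of the same step.
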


\begin{proof}
	Assume that $\supp(H)=(c,d)$, with $c<d$. Let $\pi: H \to Q$ be a surjective group homomorphism. Suppose $q_1,\ldots,q_n$ are generators of $K$. Since $K$ lies in $Q'$, one can find $h_1,\ldots,h_n$ in $H'$ such that $\pi(h_i)=q_i$ for all $i$. Since $h_i$ is in $H'$, the derivative of $h_i$ at the extreme points $c$ and $d$ must be equal to 1. Thus the closure of $\supp(h_i)$ is contained in $\supp(H)$, so that we can find $c<x <y<d$  such that for all $i$ one has $h_i(t)=t$ for all $t \leq x$ and all $t \geq y$. Since $H$ acts without fixed points in $(c,d)$, one can find $g \in H$ such that $g(x) \geq y$, and it follows that $g(\supp(h_i))$ is disjoint from $\supp(h_j)$ for all $i,j$. In particular $gh_ig^{-1}$ commutes with $h_i$, and hence  $\pi(g) K \pi(g)^{-1}$ and $K$ commute. This shows the first statement, and the second statement follows by taking $K = Q'$.
\end{proof}

\begin{proof}[Proof of Proposition \ref{prop-metab-subq}]
	Since $Q$ is not trivial and $Q$ does not admit a non-trivial abelian normal subgroup, the group $Q$ is not metabelian, and the statement then follows from Lemmas \ref{lem-reduce-compo} and \ref{lem-one-compo}.
\end{proof}

\subsection{Groups of $PL$-homeomorphisms of suspension flows} \label{s-PL-flows}

We start by recalling the setting of \cite{MB-T-flows}. Throughout this section we let $X$ be a Stone space, i.e. a compact totally disconnected space, and $\varphi\colon X\to X$ be a  homeomorphism. The dynamical system $(X, \varphi)$ will be called a \textbf{Stone system}.  We denote $Y^\varphi$ the  suspension (or mapping torus) of  $(X, \varphi)$, which  is defined as the quotient 
\[Y^\varphi:=(X\times \R) /\Z\]
with respect to the action of $\Z$  on $X\times \R$ given by $n\cdot (x, t)=(\varphi^n(x), t-n)$. We denote by $\pi\colon X\times \R\to Y^\varphi$ the projection, and by $[x, t]:=\pi(x, t)$. The suspension is endowed with a flow
\[\Phi\colon \R\times  Y^\varphi\to Y^\varphi, \quad \Phi^t([x, s])=[x, s+t].\]

The group of \textbf{self-flow-equivalences} of $(X, \varphi)$ is the group $\Hsf(\varphi)$ of all homeomorphisms of $Y^\varphi$ which send every $\Phi$-orbit to a $\Phi$-orbit in an orientation preserving way. This is a well-studied object in symbolic dynamics, see \cite{BCE, BC, APP}. We let $\Hsf_0(\varphi)$ be its subgroup consisting of homeomorphisms isotopic to the identity.

\begin{remark} Note that elements of $\Hsf_0(\varphi)$ preserve all $\Phi$-orbits, since these are precisely the path-components of $Y^\varphi$. The converse is false, namely a homeomorphism of $Y^\varphi$ which preserves every $\Phi$-orbit is not necessarily in $\Hsf_0(\varphi)$ \cite{BCE}; however, this holds true in the important case where $(X, \varphi)$ is {minimal} (i.e. all its orbits are dense), see Aliste-Prieto and Petite \cite{APP}. We also mention the following equivalent characterisation of $\Hsf_0(\varphi)$, proven in \cite{BCE}: a homeomorphism $g$ of $Y^\varphi$ belongs to $\Hsf_0(\varphi)$ if and only if there exists a continuous function $\alpha\colon Y^\varphi\to \R$ such that $g(y)=\Phi^{\alpha(y)}(y)$ for all   $y \in Y^\varphi$.
\end{remark}

Given a clopen set $C\subset X$, its \textbf{smallest return time} is :
\[\tau_C:=\min\{n\ge 1\colon \varphi^n(C)\cap C\neq \varnothing\}\in \N \cup\{\infty\}.\]

Assume that $I\le \R$ is an interval with length $|I|<\tau_C$. In this case we will say that the pair $(C, I)$ is $\varphi$-admissible; note that this is automatically the case if $|I|<1$. If $(C, I)$ is $\varphi$-admissible, then the restriction $\pi_{C, I}:=\pi|_{C\times I}$ is injective, and is called a \textbf{chart}. We denote by $U_{C, I}\subset Y^\varphi$ its image, which by abuse of terminology will also be called a chart. 

A homeomorphism $f\colon I\to J$ between two bounded intervals $I, J\subset \R$ is said to be PL if it is piecewise linear, with finitely many discontinuity points for the derivative. Moreover, we say that $f$ is \textbf{PL-dyadic} if all the discontinuity points of its derivative are dyadic rational, and in restriction to each interval of continuity of the derivative it is of the form $x\mapsto ax+b$, with $a=2^n, n\in \Z$, and $b\in \Z[\frac{1}{2}]$. Recall that Thompson's group $F$ is the group of all PL-dyadic homeomorphisms of the interval $(0, 1)$, and Thompson's group $T$ is the group of all PL-dyadic homeomorphisms of the circle $\R/\Z$. 
\begin{defin}[\cite{MB-T-flows}] \label{d-PL-phi}
	We let $\PL(\varphi)$ be the subgroup of $\Hsf_0(\varphi)$ consisting of all elements $g$ with the following property: for every $y\in Y^\varphi$, there exist a clopen set $C\subset X$, two dyadic intervals $I, J$ such that $(C, I)$ and $(C, J)$ are $\varphi$-admissible, and a PL-homeomorphism $f\colon I\to J$ such that $y\in U_{C, I}, g(U_{C, I})=U_{C, J}$ and the restriction of $g$ to $U_{C, I}$ is given in coordinates by 
	\[\pi_{C, J}^{-1} \circ g \circ \pi_{C, I}= \operatorname{Id}\times f\colon C\times I\to C\times J.\] 
	We further let $\Tsf(\varphi)$ be the subgroup of $\PL(\varphi)$ defined in a similar way but with the additional requirement that $f$ is PL-dyadic. 
\end{defin}

A proof of the following result can be found in \cite{MB-T-flows}.
\begin{thm}
	If $\varphi$ is a minimal homeomorphism of $X$, then $\Tsf(\varphi)$ is a simple group. If $(X, \varphi)$ is conjugate to a subshift, then $\Tsf(\varphi)$ is finitely generated.
\end{thm}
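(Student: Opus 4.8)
The plan is to treat the two assertions separately, since they draw on different features of the system. Simplicity when $\varphi$ is minimal will follow from the standard two-step scheme for Thompson-like groups: first a \emph{fragmentation} statement, then a \emph{commutator trick}, with minimality supplying the dynamical flexibility that makes the trick global. Finite generation when $(X,\varphi)$ is a subshift will instead be established by exhibiting an explicit finite generating set and reducing an arbitrary element to the identity by an induction on its combinatorial complexity.

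For simplicity, I would first prove a fragmentation lemma: every $g\in\Tsf(\varphi)$ is a product of finitely many elements, each supported in a single chart $U_{C,I}$ with $|I|<1$. This uses compactness of $Y^\varphi$ to cover it by finitely many charts, together with the fact that in each chart an element of $\Tsf(\varphi)$ is an honest PL-dyadic map in the flow direction with locally constant displacement. Next, fix a nontrivial normal subgroup $N$ and a nontrivial $g\in N$. Since $g$ moves some point and is PL-dyadic along the flow, one finds a sub-chart $B=U_{C',I'}$ on which $g$ acts nontrivially and which is small enough that $g(B)$ is disjoint from $B$. For any $h\in\Tsf(\varphi)$ supported in $B$, the commutator $[g,h]=(ghg^{-1})h^{-1}$ lies in $N$, is supported in the disjoint union $B\cup g(B)$, and restricts to $h^{-1}$ on $B$; since the subgroup of elements supported in $B$ is perfect (by the dyadic-subdivision doubling trick), one upgrades this into the full inclusion of that subgroup in $N$. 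Here minimality of $(X,\varphi)$ is essential: it guarantees that for any other small chart $B'$ there is an element of $\Tsf(\varphi)$ conjugating $B$ onto $B'$, so $N$ in fact contains every element supported in an arbitrarily small chart. Combined with fragmentation this forces $N=\Tsf(\varphi)$. A separate perfectness check, using the same doubling to write an element of small support as a single commutator, rules out abelian quotients, and together these give simplicity.

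For finite generation in the subshift case, I would exploit that cylinders form a countable basis of clopen subsets of $X$ organised by \emph{level} (the length of the defining word), with only finitely many cylinders at each level and the refinement of a level-$n$ cylinder into level-$(n+1)$ cylinders controlled by the finite alphabet. The candidate finite generating set combines two families: finitely many elements realising, inside a fixed chart, the standard generators of Thompson's group $T$ in the flow direction, and finitely many elements implementing the symbolic moves, namely the shift $\varphi$ together with the rearrangements that permute and refine the cylinders at the lowest level. The generation claim is then proved by induction on the complexity of a given $g$ --- measured by the depth of the clopen partition of $X$ it uses and the number of breakpoints of the PL-dyadic maps appearing in each chart --- by showing that left multiplication by suitable generators strictly decreases this complexity until $g$ becomes trivial. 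Finiteness of the alphabet is exactly what keeps the symbolic part of the generating set finite.

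The main obstacle I expect is the commutator trick in the presence of the transverse ($X$) direction: one must produce a box $B$ genuinely displaced off itself by $g$ while keeping it a valid chart $U_{C',I'}$, and then transport $B$ across $Y^\varphi$ via elements that themselves lie in $\Tsf(\varphi)$, respecting the PL-dyadic and locally-constant-displacement constraints; minimality is what makes this possible but the construction of these transporting elements is where the care is needed. For finite generation, the delicate point is the bookkeeping in the inductive reduction, ensuring that the finitely many symbolic generators suffice to pass between any two cylinder partitions without introducing new breakpoints in an uncontrolled way.
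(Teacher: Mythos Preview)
The paper does not prove this theorem at all: it is stated with the attribution ``A proof of the following result can be found in \cite{MB-T-flows}'' and no argument is given. So there is no in-paper proof to compare your proposal against; the result is imported wholesale from the companion article.

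That said, your outline is broadly the standard scheme for Thompson-like groups and is consistent with how such results are typically established (and, as far as one can tell from the surrounding discussion and the analogous remarks for $\Tsf(\varphi,\sigma)$, with the approach of \cite{MB-T-flows}). The fragmentation-plus-commutator strategy for simplicity, leveraging minimality to conjugate small charts onto one another, and the induction on combinatorial complexity for finite generation in the subshift case, are exactly the expected ingredients. Your identification of the delicate points --- constructing transporting elements that respect the PL-dyadic and locally-constant-displacement constraints, and the bookkeeping in the reduction for finite generation --- is accurate; these are indeed where the work lies, and your sketch does not yet supply those details. But as a plan it is sound, and since the present paper offers nothing beyond a citation, there is no meaningful divergence to report.
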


For every $\varphi$-admissible pair $(C, I)$, we can define a subgroup $\PL_{C, I}\le \PL(\varphi)$ isomorphic to $\PL(I)$ and supported in the chart $U_{C, I}$. The group $\PL_{C, I}$ is defined as the group of all homeomorphisms that act as the identity on the complement of $U_{C, I}$ and that  are given in coordinates by $\operatorname{Id}\times f$ on $U_{C, I}$ for some $f\in \PL(I)$. We also let $F_{C, I}$ be the subgroup of $\PL_{C, I}$ defied by the additional requirement that $f$ is PL-dyadic.   Note that $F_{C, I}$ is always isomorphic to the group  $F_I$ of PL-dyadic homeomorphisms of $I$; in particular if  the endpoints of $I$ are dyadic rationals then $F_I$ is isomorphic to Thompson's group $F:=F_{(0, 1)}$. It is shown in \cite{MB-T-flows} that the groups $F_{C, I}$ generate $\Tsf(\varphi)$ when $(C, I)$ varies over all admissible pairs. 

The following lemma is essentially Lemma 7.2 in \cite{MB-T-flows}. It analyses the structure of the germ stabilizers $\PL(\varphi)_y^0$ for $y\in Y^\varphi$ when $\varphi$ is minimal. 

\begin{lem} \label{l-first-return} Assume that $\varphi$ is a minimal homeomorphism of $X$, and fix $y\in Y^\varphi$. Let $S\subset \PL(\varphi)_y^0$ be a finite subset. Then there exist disjoint charts $U_{C_1, I_1},\ldots, U_{C_n, I_n}$ whose closure does not contain $y$ such that $S$ is contained in the subgroup $\langle \PL_{C_1, I_1},\ldots, \PL_{C_n, I_n}\rangle\simeq \PL({I_1})\times \cdots \times \PL({I_n})$. \end{lem}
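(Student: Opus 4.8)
The plan is to cover the supports of the elements of $S$ by finitely many pairwise disjoint charts, each invariant under every element of $S$, on which each $g\in S$ acts as $\operatorname{Id}\times f_g$ for a piecewise linear self-homeomorphism $f_g$ of the corresponding interval. Once this is achieved, the disjointness of the charts makes the subgroups $\PL_{C_i,I_i}$ commute and intersect pairwise trivially, so the subgroup they generate is the internal direct product $\PL_{C_1,I_1}\times\cdots\times\PL_{C_n,I_n}\simeq\PL(I_1)\times\cdots\times\PL(I_n)$, and each $g\in S$ lies in it because it preserves every chart and is the identity off their union.

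First I would record the consequences of the hypothesis $S\subset\PL(\varphi)^0_y$. Intersecting the finitely many fixed neighbourhoods of $y=[x_0,0]$, there is a single chart $U_{C_0,J_0}$, with $C_0\ni x_0$ clopen and $J_0=(-\epsilon,\epsilon)$, on which every $g\in S$ is the identity; in particular $y\notin\overline{\supp(g)}$ for all $g\in S$, so the compact set $K=\bigcup_{g\in S}\overline{\supp(g)}$ avoids $y$, which yields the requirement on the closures of the charts. The crucial point, and the place where minimality enters, is that every $\Phi$-orbit meets $U_{C_0,J_0}$ along a \emph{syndetic} set of flow-times: for any $x'$, minimality of $\varphi$ forces $\varphi^m(x')\in C_0$ for a set of integers $m$ with bounded gaps (the return-time function of $C_0$ is bounded on the compact minimal system), and at the corresponding flow-times the leaf passes through the region where all $g\in S$ are the identity. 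Since each $g\in\PL(\varphi)\subset\Hsf_0(\varphi)$ has bounded displacement (its displacement function is continuous on the compact space $Y^\varphi$), this forces, on \emph{every} leaf, the connected components of the combined support $\bigcup_{g\in S}\supp(g)$ to be intervals of uniformly bounded length. This is the ``first return'' mechanism underlying the lemma.

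Next I would produce the invariant intervals. On a fixed leaf $L\cong\R$, the endpoints of a component $(a,b)$ of $L\cap\bigcup_{g\in S}\supp(g)$ are limits of points fixed by every element of $S$, hence are fixed by every $g\in S$; being a homeomorphism of $L$ fixing $a$ and $b$, each $g$ preserves $(a,b)$ and acts on it as a piecewise linear self-homeomorphism. Using that the displacement of each $g\in S$ is locally constant transverse to the flow and that $S$ is finite, I would cover $K$ by finitely many product regions $\pi(C\times(a,b))$ over which this component structure is transversally constant; shrinking the clopen bases $C$ so that $\tau_C$ exceeds the uniform length bound (possible by the existence of Kakutani--Rokhlin towers of arbitrarily large height in minimal Stone systems) turns each into a genuine admissible chart $U_{C,(a,b)}$, with closure avoiding $y$, on which every $g\in S$ restricts to $\operatorname{Id}\times f_g$ with $f_g\in\PL((a,b))$.

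Finally I would arrange disjointness. Distinct components of the combined support on a common leaf are disjoint intervals, and using the total disconnectedness of $X$ I would refine the clopen bases to a finite clopen partition and subdivide the finitely many overlapping intervals, obtaining pairwise disjoint charts $U_{C_1,I_1},\ldots,U_{C_n,I_n}$ that still cover $K$ and remain invariant under every $g\in S$; this disjointification is carried out exactly as in \cite[Lemma 7.2]{MB-T-flows}, the only additional input being the reduction to germ-stabiliser elements that, via minimality, supplies the uniform boundedness of support components. With these charts in hand, each $g\in S$ is the product of its restrictions $g|_{U_{C_i,I_i}}\in\PL_{C_i,I_i}$, whence $S\subseteq\langle\PL_{C_1,I_1},\ldots,\PL_{C_n,I_n}\rangle\simeq\PL(I_1)\times\cdots\times\PL(I_n)$. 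I expect the main obstacle to be the \emph{simultaneous} disjointness and invariance of the charts: invariance forces each chart interval to be a full component of the combined support, while disjointness and admissibility constrain those intervals, and it is precisely the fixed neighbourhood of $y$ together with minimality that reconciles the two by bounding the components.
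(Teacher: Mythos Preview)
Your proposal is essentially correct, but it takes a noticeably more circuitous route than the paper's argument. You work from the inside out: you analyse the connected components of $\bigcup_{g\in S}\supp(g)$ leaf by leaf, use minimality to bound their lengths, then cover them by product boxes, shrink the bases via Kakutani--Rokhlin to force admissibility, and finally disjointify. Each step is plausible, though the passage ``cover $K$ by finitely many product regions over which this component structure is transversally constant'' is the most delicate and would need the most care to make fully rigorous.

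The paper's proof is far more direct and sidesteps all of this. Once one has a single chart $U_{C,J}$ (with $|J|<1$) on which every element of $S$ is the identity, one simply decomposes the \emph{complement} $Y^\varphi\setminus\overline{U}_{C,J}$ using the first-return-time function $T_C\colon C\to\N_{>0}$. Minimality makes $T_C$ bounded and locally constant, so there is a clopen partition $C=C_1\sqcup\cdots\sqcup C_n$ with $T_C\equiv t_i$ on $C_i$; writing $J=(a,b)$ and $I_i=(b,\,t_i+a)$, one gets $Y^\varphi\setminus\overline{U}_{C,J}=\bigsqcup_i U_{C_i,I_i}$ automatically. Disjointness is built in, the closures avoid $y$, and invariance under $S$ is immediate because $S$ fixes $\overline{U}_{C,J}$ pointwise (hence permutes, and therefore preserves, the connected components of its complement on each leaf). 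A single refinement of the partition then ensures that each $g\in S$ acts as $\operatorname{Id}\times f$ on each $U_{C_i,I_i}$.

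So the trade-off is: your support-based approach would work but requires juggling invariance, admissibility, and disjointness as separate constraints to be reconciled; the paper's complement-based approach gets all three for free from the first-return decomposition, with minimality entering only to make $T_C$ bounded.
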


\begin{proof} The analogous statement for the group $\Tsf(\varphi)$ is \cite[Lemma 7.2]{MB-T-flows}, and the same proof works for the whole group $\PL(\varphi)$. For completeness we recall a sketch of the argument.   Since $S$ is a finite subset of $\PL(\varphi)_y^0$, its elements must act trivially on a chart $U_{C, J}\ni y$ for some interval $J=(a, b)$ with $|J|=b-a<1$.  Consider the first return time function $T_C\colon C\to \N_{>0}$, namely $T_C(x):=\min\{n\ge 1\colon \varphi^n(x)\in C\}$. By minimality, this function is bounded and locally constant on $C$, so we can find a clopen partition $C=C_1\sqcup\cdots \sqcup C_n$ such that $T_C\equiv t_i$ on $C_i$, with $t_1,\cdots, t_n\in \N_{>0}$. For each $i$ consider the interval $I_i=(b, t_i -a)$, and note that $|I_i|=t_i-|J|$. By construction we have $Y^\varphi\setminus \overline{U}_{C, J}=\bigsqcup_{i=1}^nU_{C_i, I_i}$, and the group generated by $S$ preserves each chart $U_{C_i, I_i}$. After taking a refinement of the partition $C=C_1\sqcup\cdots\sqcup C_n$ if necessary, we reach the conclusion of the lemma. \end{proof}

We are now in position to prove Theorem \ref{t-intro-PL} from the introduction, which is recalled here.

\begin{thm} \label{t-PL-flow-not-ht}
	Let $(X, \varphi)$ be a minimal Stone system, and let $G\le \PL(\varphi)$ be any subgroup whose action on $Y^\varphi$ is topologically nowhere free.   Then $G$ does not admit any faithful highly transitive action. In particular this holds true for the group $G=\Tsf(\varphi)$.  
\end{thm}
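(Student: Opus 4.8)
The plan is to deduce Theorem~\ref{t-PL-flow-not-ht} directly from Corollary~\ref{cor-intro-criteria}, using the structure of germ stabilizers provided by Lemma~\ref{l-first-return}. First I would recall that since the action of $G$ on $Y^\varphi$ is topologically nowhere free, Lemma~\ref{lem-conf-stab} guarantees that for every $y \in Y^\varphi$ the germ stabilizer $G^0_y$ (and hence the point stabilizer $G_y$) is a confined subgroup of $G$. By Corollary~\ref{cor-intro-criteria}, to conclude that $G$ admits no faithful highly transitive action it suffices to exhibit a confined subgroup $H$ of $G$ that itself admits no faithful highly transitive action, provided we also know that $G$ is not partially finitary. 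Thus the argument splits into two tasks: verifying that $G$ is not partially finitary, and showing that some germ stabilizer $G^0_y$ has no faithful highly transitive action.

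For the second and main task, I would take $H = G^0_y$ for an arbitrary $y$ and analyze its subquotients via Lemma~\ref{l-first-return}. That lemma shows that every finite subset of $\PL(\varphi)^0_y$ lies in a subgroup isomorphic to a finite direct product $\PL(I_1) \times \cdots \times \PL(I_n)$ of groups of the form $\PL(I)$. Intersecting with $G$, every finitely generated subgroup of $H$ embeds into such a product. The key point is that the obstruction to high transitivity from \S\ref{elementary-obs} — namely, having only solvable finite subquotients — passes to $H$: by Corollary~\ref{c-PL-subquotient} every finite subquotient of $\PL(I)$ is solvable, and a finite subquotient of a finite product $\PL(I_1)\times\cdots\times\PL(I_n)$ is (a subquotient of) a subdirect product, so one must argue that finite subquotients of such products remain solvable. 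Since any finite subquotient of $H$ arises from a finitely generated subgroup, it embeds as a subquotient of some $\PL(I_1)\times\cdots\times\PL(I_n)$; I would then invoke the fact that the class of groups all of whose finite subquotients are solvable is closed under finite direct products (a finite simple non-abelian subquotient of a product projects nontrivially to some factor, yielding a non-solvable finite subquotient there, a contradiction). Hence every finite subquotient of $H$ is solvable. But a group admitting a faithful highly transitive action has all finite symmetric groups as subquotients (as recalled in \S\ref{elementary-obs}), in particular the non-solvable group $\operatorname{Sym}(5)$; so $H$ cannot admit a faithful highly transitive action.

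For the first task, that $G$ is not partially finitary, I would argue that a partially finitary $G$ would contain a copy of $\Alt_f(\Omega)$, hence a non-solvable finite subquotient such as $\operatorname{Alt}(5)$; since $G$ itself is a subgroup of $\PL(\varphi)$, I would check that the same solvability-of-finite-subquotients property propagates from germ stabilizers to $G$, or argue more directly. In fact the cleanest route is to observe that the whole group $G$, being topologically nowhere free on a minimal system, is generated by (or at least built from) its germ stabilizers; but to stay safe I would instead note that $G$ not being partially finitary can be read off from $G$ having no finite alternating subquotient of the relevant kind only after the subquotient analysis is in place. I expect the main obstacle to be precisely this bookkeeping: Lemma~\ref{l-first-return} controls finitely generated subgroups of the \emph{germ} stabilizer $\PL(\varphi)^0_y$, so I must ensure the argument applies to $G^0_y = G \cap \PL(\varphi)^0_y$ and that intersecting with $G$ does not create new non-solvable finite subquotients (it cannot, since subquotients of a subgroup are subquotients of the ambient group). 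The delicate point is handling the finite direct product correctly and confirming that Corollary~\ref{cor-intro-criteria} applies, i.e.\ that $G$ is genuinely not partially finitary; I would dispatch the partial-finitary check by the same solvable-subquotient obstruction, since $\Alt_f(\Omega)$ visibly has the non-solvable subquotient $\operatorname{Alt}(5)$.
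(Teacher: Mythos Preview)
Your overall strategy coincides with the paper's: apply Corollary~\ref{cor-intro-criteria} to a germ stabilizer $G^0_y$, using Lemma~\ref{lem-conf-stab} for confinement and Lemma~\ref{l-first-return} together with Corollary~\ref{c-PL-subquotient} to show $G^0_y$ has only solvable finite subquotients, hence no faithful highly transitive action. That part is correct. (Your detour through closure of the solvable-finite-subquotient property under finite direct products is unnecessary: the disjoint charts $U_{C_1,I_1},\ldots,U_{C_n,I_n}$ can be placed inside a single interval, so $\PL(I_1)\times\cdots\times\PL(I_n)$ already embeds into a single $\PL(I)$, and Corollary~\ref{c-PL-subquotient} applies directly. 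But your argument works too.)

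The genuine gap is in your verification that $G$ is not partially finitary. You propose to derive a contradiction from $\Alt(5)$ appearing as a finite \emph{subquotient} of $G$, by ``propagating'' the solvable-finite-subquotient property from germ stabilizers to all of $G$. This fails: the group $G$ can, and typically does, have non-solvable finite subquotients --- for instance $\Tsf(\varphi)$ contains non-abelian free subgroups (see \cite[Corollary 8.2]{MB-T-flows}), hence every finite group is a subquotient of it. So the property you established for $G^0_y$ simply does not hold for $G$, and there is nothing to propagate. The paper's fix is one line: $\PL(\varphi)$ is torsion-free (its elements act on each $\Phi$-orbit, which is a copy of $\R$, by orientation-preserving homeomorphisms), so $G$ cannot contain a copy of $\Alt_f(\Omega)$ and is therefore not partially finitary. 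Once you replace your subquotient argument by this observation, your proof is complete and matches the paper's.
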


\begin{proof}
Note that the group $\PL(\varphi)$ is torsion-free, and hence does not contain any finitary alternating group. The germ stabilizers $G_y^0$ are confined in $G$ since the action is topologically nowhere free (Lemma \ref{lem-conf-stab}). Moreover every finitely generated subgroup of $G_y^0$ is isomorphic to a subgroup of $\PL(I)$ by Lemma \ref{l-first-return}, and hence has all its finite subquotients solvable by Corollary \ref{c-PL-subquotient}. Therefore $G_y^0$ cannot admit any faithful highly transitive action (see \S \ref{elementary-obs}). The conclusion then follows from Corollary \ref{cor-intro-criteria}.
\end{proof}

\subsection{A variant on a non-orientable quotient of $Y^\varphi$} \label{subsec-variant}

It is natural to compare Theorem \ref{t-PL-flow-not-ht} with the criteria to exclude high transitivity of a group $G$ described in \S \ref{elementary-obs}. For instance the groups $\Tsf(\varphi)$ always contain non-abelian free subgroups \cite[Corollary 8.2]{MB-T-flows}, and thus do not satisfy any restriction on their subquotients. Moreover their simplicity rules out the existence of a similar restriction in a non-trivial normal subgroup. It is not clear whether $\Tsf(\varphi)$ can satisfy a non-trivial mixed identity. In this subsection, we describe a related family of subgroups of $\PL(\varphi)$ covered by Theorem \ref{t-PL-flow-not-ht}, which retain the same main properties, and for which we can exclude this as well. This family is inspired by Hyde and Lodha's examples of finitely generated simple groups acting on the real line \cite{HL}, which can be embedded in $\PL(\varphi)$ for a suitable $(X, \varphi)$. In fact, relying on subsequent work of the same authors and  Navas and Rivas \cite{HLNR}, one can show that the Hyde--Lodha examples arise as special cases of the construction described here (see Remark \ref{r-HLNR} below). 

\medskip

The main idea is to consider a   2-to-1 quotient of the space  $Y^\varphi$ in such a way that the natural partition of $Y^\varphi$ into $\Phi$-orbits passes to the quotient, but no longer  admits an orientation coming from a globally defined flow.  To this end we let again $X$ be a Stone space, and consider an action $D_\infty\acts X$ of the infinite dihedral group $D_\infty:=\Z\rtimes \Z/2\Z$. Such an action is given by a pair of homeomorphisms $(\varphi, \sigma)$  of $X$  such that $\sigma^2=\operatorname{Id}$ and  $\sigma\varphi \sigma=\varphi^{-1}$. Explicitly an element $(n, j)\in D_\infty=\Z\rtimes \Z/2\Z$ acts on $x$ by $(n, j)\cdot x=\varphi^n\sigma^j(x)$. 
We call the triple $(X, \varphi, \sigma)$ a \textbf{Stone $D_\infty$-system}. 

\emph{Throughout this section, we make the standing assumption that the action of $D_\infty$ defined by $(\varphi, \sigma)$ is free.}

Consider the  diagonal action of  $D_\infty$  on $X\times \R$, where  the action on $\R$ is the natural action by isometries given by  by $(n, j)\cdot t:=(-1)^jt-n$ for $(n, j)\in D_\infty, t\in \R$.
We define a space $Y^{\varphi, \sigma}$ as the quotient
\[Y^{\varphi, \sigma}=(X\times \R)/D_\infty.\]
We let again $\pi\colon X\times \R \to Y^{\varphi, \sigma}$ be the quotient map, and set $[x, t]=\pi(x, t)$.

Note that given $(X, \varphi, \sigma)$  we  can also consider the Stone system $(X, \varphi)$ obtained by dropping $\sigma$.  Then  $\sigma$  naturally induces a homeomorphism $\hat{\sigma}$ of order 2 of $Y^\varphi$ given by: 
\begin{equation} \label{e-hatsigma}\hat{\sigma}\colon Y^\varphi\to Y^\varphi, \quad \hat{\sigma}([x, t])=[\sigma(x), -t],\end{equation}
and we have $Y^{\varphi, \sigma}=Y^\varphi/\langle\hat{\sigma}\rangle$.

  For $x\in X$ the restriction of $\pi$ to $\{x\}\times \R$ is injective, and we denote $\ell_x$ its image, and let $\gamma_x\colon \R\to \ell_x$ be the natural parametrisation
\begin{equation} \label{e-leaves} \gamma_x\colon \R \to \ell_x, \quad \gamma_x(t)=\pi(x, t).\end{equation}

 Subsets  $\ell\subset Y^{\varphi, \sigma}$ of the form $\ell=\ell_x$ for some $x$ will be called \textbf{leaves}. Note that $Y^{\varphi, \sigma}$ is a disjoint union of leaves, which coincide with the path-connected components of $Y^{\varphi, \sigma}$. However note that the parametrisation \eqref{e-leaves}  depends on the choice of a point $x\in X$ such that $\ell=\ell_x$; a different choice of $x$ amounts to precomposing $\gamma_x$   by an element of $D_\infty$. Since the action of $D_\infty$ on $\R$ contains reflections, leaves \emph{do not} have a well-defined preferred orientation.

Given a clopen subset $C\subset X$ and an interval $I$, we say that the pair $(C, I)$ is $(\varphi, \sigma)$-\textbf{admissible} if the restriction of $\pi$ to $C\times I$ is injective; equivalently if $C\times I$ is disjoint from its image under all non-trivial elements of $D_\infty$. 
If $(C, I)$ is an admissible pair, we denote by $\pi_{C, I}$ the restriction of the quotient map $\pi$ to $C\times I$,  and by $U_{C, I}\subset Y^{\varphi, \sigma}$ its image. In this context, the map $\pi_{C, I}$, or simply its image $U_{C, I}$, will be called a \textbf{chart}.

The following lemma guarantees that charts form indeed a basis for the topology of $Y^{\varphi, \sigma}$, and moreover a bounded segment  on a leaf can always be ``thickened'' to a chart. 

\begin{lem} \label{l-charts-basis} For every  $(x, t)\in X\times \R$, and for every bounded  interval $I\subset \R$ containing $t$, there exists a clopen neighbourhood $C\subset X$  of $x$ such that $(C, I)$ is $(\varphi, \sigma)$-admissible.
\end{lem}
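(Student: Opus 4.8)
The plan is to use the admissibility criterion directly: $(C,I)$ is $(\varphi,\sigma)$-admissible precisely when $C\times I$ is disjoint from $g\cdot(C\times I)$ for every non-trivial $g\in D_\infty$, where $g=(n,j)$ acts by $(x',t')\mapsto (\varphi^n\sigma^j(x'),(-1)^jt'-n)$. The guiding idea is that the action on the $\R$-coordinate already disposes of all but finitely many group elements, because $I$ is bounded; the remaining finitely many are then handled using the standing freeness assumption together with the fact that $X$ is a Stone space.

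First I would record that the action of $D_\infty$ on $\R$ given by $(n,j)\cdot t=(-1)^jt-n$ realizes $D_\infty$ as the standard discrete infinite dihedral group of isometries of $\R$, and hence acts properly discontinuously. Consequently, since $I$ is bounded and $\overline I$ is compact, the set $F:=\{g\in D_\infty : g\cdot \overline I\cap \overline I\neq\varnothing\}$ is finite. For every $g\notin F$ we have $g\cdot I\cap I=\varnothing$, so $g\cdot(C\times I)$ and $C\times I$ have disjoint $\R$-coordinates and are therefore disjoint, for any choice of $C$.

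Next, for each non-trivial $g=(n,j)\in F$ I would write $h_g:=\varphi^n\sigma^j$ for the corresponding homeomorphism of $X$. Since the action of $D_\infty$ on $X$ is free, $h_g(x)\neq x$. As $X$ is Hausdorff, I pick disjoint open sets $U\ni x$ and $V\ni h_g(x)$; then $W:=U\cap h_g^{-1}(V)$ is an open neighbourhood of $x$ with $h_g(W)\cap W=\varnothing$. Because $X$ is compact and totally disconnected, $W$ contains a clopen neighbourhood $C_g\ni x$, which still satisfies $h_g(C_g)\cap C_g=\varnothing$. Setting $C:=\bigcap_{g\in F\setminus\{1\}}C_g$, a finite intersection of clopen sets, yields a clopen neighbourhood of $x$ with $h_g(C)\cap C=\varnothing$ for every non-trivial $g\in F$.

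Finally I would verify admissibility. If some non-trivial $g=(n,j)$ satisfied $g\cdot(C\times I)\cap(C\times I)\neq\varnothing$, then in particular $g\cdot I\cap I\neq\varnothing$, forcing $g\in F$; choosing $(x',t')\in C\times I$ with $g\cdot(x',t')\in C\times I$ would give $h_g(x')\in h_g(C)\cap C=\varnothing$, a contradiction. Hence $(C,I)$ is $(\varphi,\sigma)$-admissible, and since $t\in I$ the chart $U_{C,I}$ is a neighbourhood of $[x,t]$. I do not expect a serious technical obstacle; the one point requiring care — and the only place the standing hypothesis is used — is the interplay between the two factors: boundedness of $I$ reduces the problem to finitely many group elements, while freeness of the action on $X$ is indispensable to separate $x$ from its finitely many relevant images $h_g(x)$ via the Stone structure. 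Freeness cannot be dropped, precisely because the reflections in $D_\infty$ fix points of $\R$ and so are not neutralized by the $\R$-coordinate argument alone.
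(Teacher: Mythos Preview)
Your proof is correct and follows essentially the same approach as the paper: use properness of the $D_\infty$-action on $\R$ to reduce to finitely many non-trivial group elements, then use freeness of the action on $X$ and the Stone-space structure to find a clopen neighbourhood of $x$ disjoint from its images under these finitely many elements. The paper's version is slightly more terse (it defines $F$ to exclude the identity from the outset and does not spell out the Hausdorff separation argument), but the logic is identical.
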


\begin{proof} Since the isometric action of $D_\infty$ on $\R$ is proper, the set $F:=\{g\in D_\infty\setminus\{1\}\colon g\cdot I\cap I\neq \varnothing\}$ is finite.  Since moreover the action of $D_\infty$ on $x$ is free, we have $g(x)\neq x$ for every $g\in F$. Thus there exists a clopen neighbourhood $C$ of $x$ such that $g(C)\cap C=\varnothing$ for every $g\in F$. Now for $g\in D_\infty$ the set $g\cdot (C\times I)=(g\cdot C) \times (g\cdot I)$,  is disjoint from $C\times I$: if $g\notin F$ this follows looking at the second factor, and for $g\in F$ it follows by looking at the first factor. Thus the pair $(C, I)$ is $(\varphi, \sigma)$-admissible. \qedhere
\end{proof}

The following definition is analogous to Definition \ref{d-PL-phi}.

\begin{defin}
	We let  $\PL(\varphi, \sigma)$ be the subgroup of $\Hsf_0(\varphi, \sigma)$ consisting of elements $g$ such that for every $y\in Y^{\varphi, \sigma}$, there exist $(\varphi, \sigma)$-admissible pairs $(C, I)$ and $(C, J)$ such that  $y\in U_{C, I}$, $g(U_{C, I})=U_{C, J}$ and there exists a PL homeomorphism $f\colon I\to J$ such that the restriction of $g$ to $U_{C, I}$ is given in coordinates by
	\[\pi_{C, J}\circ g \circ \pi_{C, I}^{-1}=\operatorname{Id}\times f \colon C\times I\to C\times J.\]
	We let $\Tsf(\varphi, \sigma)$ be the subgroup of $\PL(\varphi, \sigma)$ defined by the additional requirement that $f$ is PL-dyadic.
\end{defin}

For every $(\varphi, \sigma)$-admissible pair, we have subgroups $\PL_{C, I}\le \PL(\varphi, \sigma)$ and $F_{C, I}\le \Tsf(\varphi, \sigma)$ defined analogously as in the previous subsection.

By viewing $Y^{\varphi, \sigma}=Y^{\varphi}/\langle \hat{\sigma}\rangle$ as in \eqref{e-hatsigma},  the group $\PL(\varphi, \sigma)$ can alternatively  be seen as a subgroup of  $\PL(\varphi)$. 
\begin{prop} \label{p-lift} The group $\PL(\varphi, \sigma)$ lifts to a group of homeomorphisms of $Y^\varphi$, namely the subgroup of $\PL(\varphi)$ consisting of elements that commute with $\hat{\sigma}$.
\end{prop}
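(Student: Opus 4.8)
The plan is to realize the isomorphism by lifting homeomorphisms across the quotient map $p\colon Y^\varphi\to Y^{\varphi,\sigma}=Y^\varphi/\langle\hat{\sigma}\rangle$, so the first step is to record the structure of $p$. Since the $D_\infty$-action on $X$ is free, the point $\sigma(x)$ never lies in the $\varphi$-orbit of $x$, so $\hat{\sigma}([x,t])=[\sigma(x),-t]$ is a fixed-point-free involution of $Y^\varphi$ and $p$ is a two-to-one covering with deck group $\langle\hat{\sigma}\rangle\simeq\Z/2\Z$. I would observe that $\hat{\sigma}$ reverses the flow orientation of every leaf, since it flips the $\R$-coordinate. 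The local fact to isolate is that $p$ trivializes over charts: if $(C,I)$ is $(\varphi,\sigma)$-admissible then it is in particular $\varphi$-admissible, and $p^{-1}(U_{C,I})$ is the disjoint union of the $Y^\varphi$-chart over $C\times I$ and its image under $\hat{\sigma}$, namely the chart over $\sigma(C)\times(-I)$, which is again $\varphi$-admissible because $\sigma\varphi\sigma=\varphi^{-1}$ gives $\tau_{\sigma(C)}=\tau_C$. On each of these two sheets $p$ restricts to a homeomorphism onto $U_{C,I}$ respecting the product coordinates.

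Next I would dispatch the easy direction. If $\tilde{g}\in\PL(\varphi)$ commutes with $\hat{\sigma}$, then $\tilde{g}$ preserves the fibers of $p$ and descends to a homeomorphism $g$ of $Y^{\varphi,\sigma}$ with $p\circ\tilde{g}=g\circ p$. Reading the defining local form $\operatorname{Id}\times f$ of $\tilde{g}$ through the chart identifications above, where $p$ respects the product coordinates, shows that $g$ has the same local form, and hence $g\in\PL(\varphi,\sigma)$.

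The substantial direction is to lift an arbitrary $g\in\PL(\varphi,\sigma)$. The point I would exploit is that every element of $\PL(\varphi,\sigma)$ is transversally locally constant, its local model $\operatorname{Id}\times f$ being independent of the $X$-coordinate, which is exactly what lets one push forward leaf orientations continuously. Concretely, given $y\in Y^\varphi$, I pick a chart $U_{C,I}\ni p(y)$ on which $g$ acts as $\operatorname{Id}\times f\colon C\times I\to C\times J$; then $y$ lies in a unique sheet of $p^{-1}(U_{C,I})$, and I define $\tilde{g}$ on that sheet to be $g$ transported through $p$ to the sheet of $p^{-1}(U_{C,J})$ carrying the matching flow orientation, i.e. the copy $\operatorname{Id}\times f$ of $g$ in the natural coordinates of the $Y^\varphi$-chart over $C\times J$. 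This prescription is intrinsic, so the local definitions agree on overlaps and glue to a continuous map $\tilde{g}$ whose inverse is the lift of $g^{-1}$; thus $\tilde{g}$ is a homeomorphism with local form $\operatorname{Id}\times f$ preserving the flow orientation, so $\tilde{g}\in\PL(\varphi)$. Since $\hat{\sigma}$ is the nontrivial deck transformation and orientation-reversal commutes with the orientation pushforward, one checks $\tilde{g}\hat{\sigma}=\hat{\sigma}\tilde{g}$. Finally I would note that $g\mapsto\tilde{g}$ respects composition and is inverse to the descent map of the previous paragraph, giving the asserted identification of $\PL(\varphi,\sigma)$ with the centralizer of $\hat{\sigma}$ in $\PL(\varphi)$.

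The step I expect to be the main obstacle is precisely the existence and continuity of the global lift $\tilde{g}$, that is, verifying that $g$ lifts across the double cover $p$. This is where the non-orientability of the leaves of $Y^{\varphi,\sigma}$ is felt: a general leaf-preserving homeomorphism need not lift, and what rescues the argument is the flatness of elements of $\PL(\varphi,\sigma)$ together with the local triviality of $p$ over charts. To keep this clean I would define the lift invariantly, as the unique flow-orientation-preserving lift of $g$; its uniqueness makes well-definedness on chart overlaps automatic and simultaneously yields the functoriality and injectivity of $g\mapsto\tilde{g}$.
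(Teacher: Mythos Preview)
Your proposal is correct and takes essentially the same approach as the paper: both construct the lift chart by chart from the local model $\operatorname{Id}\times f$, after observing that $p\colon Y^\varphi\to Y^{\varphi,\sigma}$ is a free $\Z/2\Z$-cover trivialized over $(\varphi,\sigma)$-admissible charts. The only difference is presentational---the paper picks a finite cover of $Y^{\varphi,\sigma}$ by charts and writes the explicit formula $\bar{f}(t)=-f(-t)$ on the $\hat{\sigma}$-image sheets, whereas you characterize $\tilde{g}$ invariantly as the unique flow-orientation-preserving lift, which makes the agreement on overlaps and the commutation with $\hat{\sigma}$ automatic.
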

\begin{proof} It is clear that every element of $\PL(\varphi)$ which commutes with $\hat{\sigma}$ descends to a homeomorphism of $Y^{\varphi, \sigma}$ that belongs to $\PL(\varphi, \sigma)$. Conversely, given $g\in \PL(\varphi, \sigma)$ we check that $g$ lifts to an element  $\tilde{g} \in \PL(\varphi)$ which commutes with $\hat{\sigma}$. By definition we can find two covers $Y^{\varphi, \sigma}=\bigcup_{i=1}^n U_{C_i, I_i}=\bigcup_{i=1}^n U_{C_i, J_i}$ by charts, and PL-homeomorphisms $f_i\colon I_i\to J_i, i=1,\ldots, n$, such that $g(U_{C_i, I_i})=U_{C_i, J_i}$ as in the definition of elements of $\PL(\varphi, \sigma)$. Each pair $(C_i, I_i)$ defines also a chart $U^\varphi_{C_i, I_i}\subset Y^\varphi$ (here we use the notation $U^\varphi_{C, I}$ for charts in $Y^\varphi$ to distinguish them from charts in $Y^{\varphi, \sigma}$). Now the fact that $(C_i, I_i)$ is a $(\varphi, \sigma)$-admissible pair for every $i$ is equivalent to the fact that $\hat{\sigma}(U^\varphi_{C_i, I_i})\cap U^\varphi_{C_i, I_i}=\varnothing$. Noting that $\hat{\sigma}(U_{C_i, I_i})=U_{\sigma(C_i), -I_i}$, and using that the charts $U_{C_i,I_i}$ and $U_{C_i, J_i}$ form covers of $Y^{\varphi, \sigma}$, we have two covers $Y^\varphi=\bigcup_{i=1}^n \left(U^\varphi_{C_i, I_i} \cup U^\varphi_{\hat{\sigma}(C_i), -I_i}\right)=\bigcup_{i=1}^n \left(U^\varphi_{C_i, J_i} \cup U^\varphi_{\hat{\sigma}(C_i), -J_i}\right)$. Then we can consider the element $\tilde{g}\in \PL(\varphi)$ which in restriction to each chart $U^\varphi_{C_i, I_i}$ is given in coordinates by the homeomorphism $f_i\colon I_i\to J_i$, and in restriction to each chart $U^\varphi_{\sigma(C_i), -I_i}$ is given by $\bar{f_i}\colon -I_i\to J_i$, where $\bar{f}(t):=-f(-t)$. It is routine to check that $\tilde{g}$ is a well-defined lift of $g$, and this gives the desired conclusion. \qedhere
	
\end{proof}

The goal of the remaining part of this section is to prove the following. 

\begin{thm}\label{t-phi-sigma}
	Let $(X, \varphi, \sigma)$ be a free Stone $D_\infty$-system such that $(X, \varphi)$ is minimal. Then the group $\Tsf(\varphi, \sigma)$ satisfies the following:
	\begin{enumerate}[label=(\roman*)]
	\item \label{i-phi-sigma-ht} it does not admit any faithful highly transitive action;
	\item \label{i-phi-sigma-mixid} it does not satisfy any non-trivial mixed identity;
	\item \label{i-phi-sigma-free} it contains non-abelian free subgroups. 
	\end{enumerate}
\end{thm}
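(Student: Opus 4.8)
The plan is to handle the three items separately, deducing \ref{i-phi-sigma-ht} from the machinery already in place and proving \ref{i-phi-sigma-free} and \ref{i-phi-sigma-mixid} by hand, the latter being where the reflection $\sigma$ is genuinely exploited.

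For \ref{i-phi-sigma-ht} I would reduce to Theorem \ref{t-PL-flow-not-ht}. By Proposition \ref{p-lift} the group $G=\Tsf(\varphi,\sigma)$ is identified with a subgroup of $\PL(\varphi)$, namely the centraliser of $\hat\sigma$, and since $(X,\varphi)$ is minimal it suffices to check that this action on $Y^\varphi$ is topologically nowhere free. Given $y\in Y^\varphi$ with image $\bar y\in Y^{\varphi,\sigma}$, I would use minimality together with Lemma \ref{l-charts-basis} to produce a $(\varphi,\sigma)$-admissible chart $U_{C,I}$ whose closure avoids $\bar y$, and pick a non-trivial $g\in F_{C,I}\le G$. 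Its lift $\tilde g$ is supported in $U^\varphi_{C,I}\cup\hat\sigma(U^\varphi_{C,I})$; since the covering projection $Y^\varphi\to Y^{\varphi,\sigma}$ is closed, the projection of $\overline{U^\varphi_{C,I}}$ equals $\overline{U_{C,I}}$, and as both $y$ and $\hat\sigma(y)$ project to $\bar y$ the condition $\bar y\notin\overline{U_{C,I}}$ forces $y$ to lie outside the closure of $\supp(\tilde g)$. Hence $\tilde g\in G^0_y\setminus\{1\}$, the action is topologically nowhere free, and Theorem \ref{t-PL-flow-not-ht} (equivalently Corollary \ref{cor-intro-criteria} applied to the confined germ-stabilisers, whose finitely generated subgroups are products of copies of $\PL(I)$ by the analogue of Lemma \ref{l-first-return}) gives \ref{i-phi-sigma-ht}.

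For \ref{i-phi-sigma-free} I would run a ping-pong argument on a single leaf, in the spirit of \cite[Corollary 8.2]{MB-T-flows}. By definition every element of $\PL(\varphi,\sigma)$ acts within a chart as $\operatorname{Id}\times f$, hence preserves each leaf and acts on it, through the parametrisation \eqref{e-leaves}, by an orientation-preserving homeomorphism assembled from PL-dyadic pieces. Using minimality of $(X,\varphi)$ I would realise two elements $a,b\in\Tsf(\varphi,\sigma)$ whose restrictions to a common leaf $\ell$ have North--South type dynamics with suitably disjoint attracting and repelling configurations; the Klein ping-pong lemma then shows that large powers $a^N,b^N$ generate a free group. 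Equivalently, the restriction homomorphism $\Tsf(\varphi,\sigma)\to\homeo(\ell)$ has non-abelian free image, so $G$ contains a non-abelian free subgroup.

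The real work is \ref{i-phi-sigma-mixid}, and this is the main obstacle. The point is that for $\Tsf(\varphi)$ the leaves carry a globally coherent orientation coming from the flow $\Phi$, and it is exactly this coherence that produces a mixed identity for order-preserving line actions such as Thompson's $F$ (see \S\ref{elementary-obs}); passing to $Y^{\varphi,\sigma}=Y^\varphi/\langle\hat\sigma\rangle$ via \eqref{e-hatsigma} destroys any such orientation, since a chart and its $\hat\sigma$-reflected copy induce opposite orientations on the underlying leaf. Concretely, given a non-trivial reduced word $w=c_0x_{i_1}^{\varepsilon_1}c_1\cdots x_{i_m}^{\varepsilon_m}c_m\in G\ast\mathbb{F}_n$ with $c_j\in G$, I would construct a substitution $x_i\mapsto h_i$ for which $w(h_1,\dots,h_n)$ moves some basepoint $p$ on a leaf: reading $w$ from the right, I would use minimality and Lemma \ref{l-charts-basis} to push $p$ through a family of disjoint charts prepared in advance, so that no cancellation occurs and the terminal image differs from $p$. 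The two genuine difficulties are that a repeated variable forces the \emph{same} $h_i$ to be reused at several letters, and that the fixed coefficients $c_j$ must be threaded between the charts without collapsing the configuration; I expect to resolve both by a ping-pong bookkeeping assigning each variable disjoint ``source'' and ``target'' regions, with the $c_j$ carrying the moving point from one region to the next. The place where orientation-reversal is decisive is precisely in defeating the monotonicity obstruction: because one can prescribe local PL models of either orientation along the itinerary of $p$, the realisation is always possible, which is what fails (or is unclear) for $\Tsf(\varphi)$. Combined with \ref{i-phi-sigma-free}, item \ref{i-phi-sigma-mixid} then shows that none of the obstructions of \S\ref{elementary-obs} accounts for \ref{i-phi-sigma-ht}, so that the confined-subgroup criterion is genuinely needed here.
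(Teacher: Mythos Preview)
Your approach to \ref{i-phi-sigma-ht} via Proposition~\ref{p-lift} and Theorem~\ref{t-PL-flow-not-ht} matches the paper's exactly.

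For \ref{i-phi-sigma-free} the paper takes a different route: it shows that the action of $\Tsf(\varphi,\sigma)$ on the compact space $Y^{\varphi,\sigma}$ is minimal and extremely proximal, and then quotes Glasner's ping-pong theorem \cite[Theorem~3.4]{Glas-EP}. This avoids having to locate explicit hyperbolic-type elements on a leaf; your plan is plausible but the construction of North--South pairs on a non-compact leaf, where elements of $G$ do not have compactly supported restrictions, would need real work.

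For \ref{i-phi-sigma-mixid} the paper's argument is structurally different and avoids the bookkeeping you anticipate. It isolates two clean lemmas. First (Lemma~\ref{l-mix-id-homeo}): if $w=z^{n_k}g_k\cdots z^{n_1}g_1\in\langle z\rangle*\homeo_0(\R)$ with every $g_i$ \emph{unboundedly positive} (meaning $\{t:g_i(t)>t\}$ accumulates at both $\pm\infty$), then $w(h)\neq 1$ for some $h\in\homeo_0(\R)$. Second (Lemma~\ref{l-unboundedly-positive}): the leaf representation $\rho_x\colon\Tsf(\varphi,\sigma)\to\homeo_0(\R)$ has dense image, and every non-trivial element of $G$ is sent to a homeomorphism that is simultaneously unboundedly positive and unboundedly negative. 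The proof of \ref{i-phi-sigma-mixid} is then one line: a mixed identity on $G$ passes by density to all of $\homeo_0(\R)$, contradicting the first lemma. No repeated-variable or coefficient-threading issues arise, because $h$ is built freely in $\homeo_0(\R)$, not in $G$.

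Your account of where $\sigma$ enters is slightly off, and this matters. You write that ``one can prescribe local PL models of either orientation along the itinerary of $p$'', but every element of $G$ acts orientation-preservingly on each leaf (as you yourself note earlier), so you cannot flip orientations when building $h$. The genuine role of $\sigma$ is on the \emph{coefficients}: a non-trivial $g\in G$ that pushes forward on a chart $U_{C,I}$ will, on the leaf $\ell_x$, push forward at the return times in $E_+=\{n:\varphi^n(x)\in C\}$ and backward at the return times in $E_-=\{n:\varphi^n\sigma(x)\in C\}$; minimality makes both sets unbounded in both directions, so $\rho_x(g)$ is unboundedly positive and negative. It is this property of the fixed parameters $g_i$, not any freedom in choosing $h$, that defeats the monotonicity obstruction and makes Lemma~\ref{l-mix-id-homeo} applicable. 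Your direct construction could presumably be carried out once this is understood, but the paper's density trick is both shorter and conceptually sharper.
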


We will prove each property separately. The first part is a direct application of Theorem \ref{t-PL-flow-not-ht}, thanks to Proposition \ref{p-lift}. 
\begin{proof}[Proof of Theorem \ref{t-phi-sigma} \ref{i-phi-sigma-ht}]
	Choose $\tilde{y}\in Y^\varphi$ which projects to $y$. By Proposition \ref{p-lift}, the group $G=\Tsf(\varphi, \sigma)$ lifts to an isomorphic subgroup $\tilde{G}\le \PL(\varphi)$. Moreover for every $y\in Y^{\varphi, \sigma}$, and every $\tilde{y}\in Y^\varphi$ which projects to $y$ we have $G_y^0= \tilde{G}^0_{\tilde{y}}$. Hence $\tilde{G}$ has non-trivial germ-stabilizers. The conclusion then follows from Theorem \ref{t-PL-flow-not-ht}.  \qedhere
\end{proof}

We now turn to the absence of non-trivial mixed identity. To this end, we first rule out the existence of a specific type of non-trivial mixed identities within the group $\homeo_0(\R)$ of orientation-preserving homeomorphisms of $\R$. For $f\in \homeo_0(\R)$, we say that $f$ is \textbf{unboundedly positive} if the set $\{t\in \R \colon f(t)>t\}$ accumulates to both $+\infty$ and $-\infty$. We say that $f$ is \textbf{unboundedly negative} if $f^{-1}$ is unboundedly positive.

Assume that $G$ is a group and $w$ is an element of the free product $\langle z \rangle *G$, where $z$ is a generator of an infinite cyclic group. For $h\in G$ we denote $w(h)\in G$ the image of $w$ under the homomorphism from $\langle z \rangle *G$ to $G$ which is the identity on $G$ and  maps $z$ to $h$. By definition the group $G$ satisfies the mixed identity $w=1$ if $w(h)=1$ for every $h\in G$.

 \begin{lem} \label{l-mix-id-homeo}
	Let $w\in \langle z \rangle * \homeo_0(\R)$ be a non-trivial element given by the reduced word $w=z^{n_k}g_k\cdots z^{n_1}g_1$, where $n_1,\cdots, n_k\neq 0$ and $g_1, \cdots, g_k\in \homeo_0(\R)$ are all unboundedly positive.  Then there exists $h\in \homeo_0(\R)$ such that $w(h)\neq 1$.
\end{lem}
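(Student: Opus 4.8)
The plan is to produce a single $h\in\homeo_0(\R)$ together with a point $p\in\R$ whose orbit under the successive letters of $w(h)$ is strictly increasing; then $w(h)(p)>p$ immediately forces $w(h)\neq\operatorname{Id}$. Reading $w$ from the right, I would track the finite trajectory defined by $t_0=p$, then $t_{2i-1}=g_i(t_{2i-2})$ and $t_{2i}=h^{n_i}(t_{2i-1})$ for $i=1,\dots,k$, so that $t_{2k}=w(h)(p)$, and I would build $h$ and choose $p$ so as to guarantee $t_0<t_1<\dots<t_{2k}$. The two kinds of steps are treated separately. The $g$-steps are where the hypothesis enters: since $t_{2i-1}=g_i(t_{2i-2})$, to get $t_{2i-1}>t_{2i-2}$ it suffices that the input $t_{2i-2}$ lie in $\{t:g_i(t)>t\}$, which is unbounded above because $g_i$ is unboundedly positive (only the accumulation at $+\infty$ is needed). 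The $h$-steps carry all the freedom: having reached $t_{2i-1}$, I am free to prescribe the output $t_{2i}$ to be any point with $t_{2i}>t_{2i-1}$, chosen moreover, when $i<k$, to be a large element of $\{t:g_{i+1}(t)>t\}$, which exists since that set is unbounded above.

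It remains to realize the prescribed values of $h^{n_i}$ by a genuine homeomorphism. In block $i$, I would define $h$ only on finitely many points of the open interval $(t_{2i-1},t_{2i})$, as follows. If $n_i>0$, I fix an increasing chain $t_{2i-1}<s_1<\dots<s_{n_i-1}<t_{2i}$ and set $h(t_{2i-1})=s_1,\dots,h(s_{n_i-1})=t_{2i}$, so that $h^{n_i}(t_{2i-1})=t_{2i}$. If $n_i<0$, writing $m=-n_i$, I fix a decreasing chain $t_{2i}>v_1>\dots>v_{m-1}>t_{2i-1}$ and set $h(t_{2i})=v_1,\dots,h(v_{m-1})=t_{2i-1}$, so that $h^{-m}(t_{2i-1})=t_{2i}$. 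In either case the partial map defined on block $i$ is strictly increasing, and all the domain and range points it uses lie inside $[t_{2i-1},t_{2i}]$.

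Finally I would check global consistency, which is the only genuine obstacle. Because the trajectory is strictly increasing, the blocks $[t_{2i-1},t_{2i}]$ for $i=1,\dots,k$ are pairwise disjoint and ordered increasingly (separated by the gaps $[t_{2i-2},t_{2i-1}]$ across which the fixed maps $g_i$ act). Since every point used to define $h$ in block $i$ lies in $[t_{2i-1},t_{2i}]$ and each block's partial map is strictly increasing, the union over all blocks is a strictly increasing partial self-map of $\R$ defined on a finite set; such a map extends to an orientation-preserving homeomorphism $h\in\homeo_0(\R)$ (for instance by affine interpolation on the finitely many points and the identity far out). For this $h$ and this $p$ the trajectory above satisfies $t_0<t_1<\dots<t_{2k}$, whence $w(h)(p)=t_{2k}>p$ and $w(h)\neq\operatorname{Id}$. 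The main point to be careful about is precisely this matching of the finitely many local prescriptions for $h$ into one monotone map, and it is the spacing of the strictly increasing trajectory into disjoint, order-respecting blocks that makes it automatic.
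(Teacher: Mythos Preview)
Your proof is correct and follows essentially the same strategy as the paper: choose a starting point and build $h$ so that the trajectory under the successive letters of $w(h)$ is strictly increasing, forcing $w(h)(p)>p$. The only cosmetic difference is in how $h$ is realized---the paper places fixed points $p_i\in(t_i,g_i(t_i))$ for $h$ and makes $h$ push monotonically on each interval $(p_i,p_{i+1})$ so that $h^{n_i}(g_i(t_i))>t_{i+1}$, whereas you prescribe $h$ on a finite orbit inside each block $[t_{2i-1},t_{2i}]$ and interpolate; both constructions are equally straightforward.
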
 
Note that the lemma is also true if we assume instead that  $g_1, \cdots, g_k$ are unboundedly negative. 
\begin{proof}
If $k=1$ the conclusion is obvious, so we can assume that $k\ge 2$.	Using the assumption, we can find points $t_1,\cdots, t_k\in \R$ such that 
	\[t_1<g_1(t_1)<t_2<g_2(t_2)<\cdots<t_k<g_k(t_k).\] Choose points $p_i\in (t_i, g_i(t_i))$ for $i=1,\cdots, k$. Choose an element $h\in \homeo_0(\R)$ which fixes $p_i$ for $i=1,\cdots, k$, acts trivially on the interval $(-\infty, p_1)$ and on $(p_k, +\infty)$, and such that for $i=1,\cdots, k$ it satisfies $h(g_i(t_i))>t_{i+1}$ if $n_i>0$, and $h(t_{i+1})<g_i(t_i)$ if $n_i<0$. Note that in both cases, we have $h^{n_i}(g_i(t_i))>t_{i+1}$ for $i=1,\cdots, k-1$. Then one readily checks that $w(h)(t_1)\geq t_k>t_1$. In particular $w(h)\neq 1$ as desired. \qedhere
\end{proof}

\begin{remark}
	If we drop the assumption on the parameters $g_1,\cdots, g_k$, the conclusion is false. Indeed the group $\homeo_0(\R)$ does satisfy non-trivial mixed identities \cite{id-F}.
\end{remark}

For every $x\in X$ the group $\Tsf(\varphi, \sigma)$ preserves the leaf $\ell_x\subset Y^{\varphi, \sigma}$. Its action on $\ell_x$ provides a representation $\rho_x\colon \Tsf(\varphi, \sigma)\to \homeo_0(\R)$ given by 
\[\rho_x(g)=\gamma_x^{-1}\circ g|_{\ell_x} \circ\gamma_x,\]
 where $\gamma_x$ is the parametrisation \eqref{e-leaves}. The representations $\rho_x$ have the following properties.
\begin{lem} \label{l-unboundedly-positive}
	Let $(X, \varphi, \sigma)$ be a free Stone $D_\infty$-system.  Fix $x\in X$, and consider the representation $\rho_x\colon \Tsf(\varphi, \sigma)\to \homeo_0(\R)$. Then:
	\begin{enumerate}[label=(\roman*)]
		\item \label{i-dense-image} The image of $\rho_x$ is dense in $\homeo_0(\R)$ endowed with the compact-open topology. 
		\item \label{i-unboundedly-positive} If $(X, \varphi)$ is minimal, then $\rho_x$ is faithful and the image of every non-trivial element is both unboundedly positive and unboundedly negative. 
	\end{enumerate}
\end{lem}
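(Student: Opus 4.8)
The plan is to treat the two parts separately, using in both the ``local rigidity'' of elements of $\Tsf(\varphi,\sigma)$: by definition every $g\in \Tsf(\varphi,\sigma)$ is given, in suitable charts, by a map of the form $\mathrm{Id}\times f_0$ with $f_0$ PL-dyadic, and the key technical move is to translate this into statements about $\rho_x(g)$ by recording how the leaf $\ell_x$ passes through a given chart.

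For part \ref{i-dense-image}, recall that the compact--open topology on $\homeo_0(\R)$ is the topology of uniform convergence on compact intervals (for monotone homeomorphisms, uniform convergence of $g_n$ on compacts forces the same for $g_n^{-1}$). So fix $f\in\homeo_0(\R)$, $M>0$ and $\varepsilon>0$; I want $g\in\Tsf(\varphi,\sigma)$ with $|\rho_x(g)(t)-f(t)|<\varepsilon$ for $|t|\le M$. First choose dyadic $a<\min(-M,f(-M))$ and $b>\max(M,f(M))$, set $I=(a,b)$, and invoke Lemma \ref{l-charts-basis} to find a clopen $C\ni x$ with $(C,I)$ admissible. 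Extend $f|_{[-M,M]}$ to an increasing homeomorphism $\bar f$ of $I$ fixing the endpoints, and approximate $\bar f$ uniformly on $I$ by a PL-dyadic homeomorphism $f_0\in F_I$ of $I$ (density of $F_I$ in $\homeo_0(I)$ is classical). Let $g\in F_{C,I}\le\Tsf(\varphi,\sigma)$ be given by $\mathrm{Id}\times f_0$ on $U_{C,I}$ and the identity elsewhere. Since the segment $\{t\in I\}$ of $\ell_x$ is exactly the passage through $U_{C,I}$ corresponding to the trivial element of $D_\infty$, one checks that $\rho_x(g)(t)=f_0(t)$ for all $t\in I$; hence $\rho_x(g)$ agrees with $f_0\approx\bar f=f$ on $[-M,M]$ to within $\varepsilon$. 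The behaviour of $\rho_x(g)$ on the other, disjoint, passages of $\ell_x$ through $U_{C,I}$ is irrelevant, as they lie outside $[-M,M]$.

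For part \ref{i-unboundedly-positive}, I first prove faithfulness. By minimality of $(X,\varphi)$ the two-sided $\varphi$-orbit of $x$ is dense, so $\ell_x$ meets every chart and is therefore dense in $Y^{\varphi,\sigma}$. If $\rho_x(g)=\mathrm{Id}$ then $g$ fixes $\ell_x$ pointwise; since $\fix(g)$ is closed it contains $\overline{\ell_x}=Y^{\varphi,\sigma}$, so $g=\mathrm{Id}$. Now let $g\neq\mathrm{Id}$ and write $f=\rho_x(g)$, an orientation-preserving homeomorphism of $\R$. By faithfulness $f\neq\mathrm{Id}$, so there is $t_0$ with $f(t_0)\neq t_0$; assume $f(t_0)>t_0$ (the case $f(t_0)<t_0$ is symmetric, with the two families of passages below exchanging their roles). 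Choose a chart $U_{C,I}$ with $x\in C$, $t_0\in I$, on which $g$ acts as $\mathrm{Id}\times f_0$; shrinking $(C,I)$ we may assume $f_0(s)>s$ for every $s\in I$. The core computation is to follow $f$ along the passages of $\ell_x$ through $U_{C,I}$, which come in two families. Using $[x,t]=[\varphi^n\sigma^j(x),(-1)^jt-n]$: a passage with $j=0$ occurs whenever $\varphi^n(x)\in C$, sits at leaf-coordinates $I+n$, and there $f$ is the \emph{translate} $u\mapsto f_0(u-n)+n$, so $f(u)>u$; a passage with $j=1$ occurs whenever $\varphi^n\sigma(x)\in C$, sits at leaf-coordinates $-I-n$, and there $f(u)=-f_0(-u-n)-n$, the \emph{reflected} copy, so now $f(u)<u$. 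By minimality the forward and backward $\varphi$-orbits of both $x$ and $\sigma(x)$ return to $C$ infinitely often, so both families of passages accumulate at $+\infty$ and at $-\infty$. Hence $\{t:f(t)>t\}$ and $\{t:f(t)<t\}$ are each unbounded in both directions, i.e. $f$ is both unboundedly positive and unboundedly negative.

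The step requiring care is precisely this last sign bookkeeping: it is essential that the reflections ($j=1$) in the $D_\infty$-action reverse the orientation of the leaf and hence turn an upward displacement into a downward one. Without $\sigma$ this fails --- in the orientable group $\Tsf(\varphi)$ the time-$\tfrac12$ map of the flow $\Phi$ gives an element with $\rho_x(g)(t)=t+\tfrac12>t$ everywhere --- so the non-orientability of $Y^{\varphi,\sigma}$, combined with the density of returns furnished by minimality, is exactly what forces both kinds of displacement to recur at both ends.
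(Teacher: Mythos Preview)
Your proof is correct and follows essentially the same approach as the paper: for part \ref{i-dense-image} you both thicken a large interval containing the relevant compact set to a chart via Lemma \ref{l-charts-basis} and use density of $F_I$ in $\homeo_0(I)$; for part \ref{i-unboundedly-positive} you both pick a chart on which $g$ strictly increases the flow coordinate, then track the two families of returns of $\ell_x$ to that chart (indexed by whether $\varphi^n(x)\in C$ or $\varphi^n\sigma(x)\in C$) and observe that the orientation-reversing $j=1$ passages flip the sign of the displacement, with minimality ensuring both families accumulate at $\pm\infty$. Your explicit formulas and the closing remark on why the involution $\sigma$ is essential are nice additions, but the argument is the same.
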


\begin{proof}
	\ref{i-dense-image} Let $h\in \homeo_0(\R)$. By definition of the compact-open topology, we need to show that for every compact interval $K\subset \R$ and every $\varepsilon>0$ there exists $g\in \Tsf(\varphi, \sigma)$ such that $|\rho_x(g)(t)-h(t)|\le \varepsilon$ for every $t\in K$.  Let $I\subset \R$ be a large enough bounded open interval  such that $0\in I$ and $K\cup h(K) \subset I$.   By Lemma \ref{l-charts-basis} we can find a clopen neighbourhood $C$ of $x$ such that the pair $(C, I)$ is $(\varphi, \sigma)$-admissible. Then the image under $\rho_x$ of the group $F_{C, I}\le \Tsf(\varphi, \sigma)$  preserves $I$ and its action induces the standard action of $F_I$. Then, using that $F_I$ is dense in $\homeo_0(I)$, we can find $g\in F_{C, I}$ such that $|\rho_x(g)(t)-h(t)|\le \varepsilon$ for every $t\in K$, as desired. 
	
	\ref{i-unboundedly-positive} If $(X, \varphi)$ is minimal, then every leaf $\ell_x$ is dense in $Y^{\varphi, \sigma}$, and thus the action of $\Tsf(\varphi, \sigma)$ on it is faithful. Choose $g\in \Tsf(\varphi, \sigma)$ non-trivial and $y\in \ell_x$ such that $g(y)\neq y$. Write $y=\gamma_x(t_0)$ for some $t_0\in  \R$.  Choose also a chart $U_{C, I}$ containing $y$, with $(x, t_0)\in C\times I$, such $g$ is given in coordinates by $\operatorname{Id}\times f$ for some PL-dyadic homeomorphism $f\colon I\to J$. Note that $g(y)\neq y$ implies that $f(t_0)\neq t_0$, so that  upon restricting $I$ we can assume without loss of generality that $f$ has no fixed points in $I$.  Moreover, after replacing $g$ by its inverse if necessary, we can assume that $f(t)>t$ for every $t\in I$. By construction, the restriction of $\rho_x(g)$ to $I$ coincides with $f$. 
	
	Consider now the following two sets of integers:
	\[E_+=\{n\in \Z \colon \varphi^n(x)\in C\}, \quad \quad E_-=\{n\in \Z \colon \varphi^n\sigma(x)\in C\}.\]
	By minimality of $(X, \varphi)$ both $E_+$ and $E_-$ contain sequences accumulating to both $+\infty$ and $-\infty$. For $n\in E_+$ let $I_n$ be the image of $I$ under the translation $t\mapsto t+n$ (that is which corresponds to the element $(n, 0)^{-1}\in D_\infty$). For $n\in E_-$ let ${I_n}$ be the image of $I$ under the map $t\mapsto -t-n$ (that is the element $(n, 1)^{-1}=(n, 1)\in D_\infty$). Note that the set $\bigcup_{n\in E_+\cup E_-} I_n $ is precisely the sets of times $t\in \R$ such that $\gamma_x(t)\in U_{C\times I}$. More precisely for $t\in I_n$ with $n\in E_+$ we have $\gamma_x(t)=\pi(\varphi^n(x), t-n)$, so that $\gamma_x(t)$ runs through the arc $\pi(\{\varphi^n(x)\} \times I)$ according to the orientation given by the orientation of $I$. Instead if $t\in I_n$ with $n\in E_-$ we have $\gamma_x(t)=\pi(\varphi^n\sigma(x), -t-n)$, so that $\gamma_x(t)$ runs through the arc $\pi(\{\varphi^n\sigma(x)\}\times I)$ in the opposite orientation. Now note that for $n\in E_+$ the restriction of $\rho_x(g)$ to the interval $I_n$ coincides with the conjugate of $f$ by the translation $t\mapsto t+n$, and thus satisfies $\rho_x(g)(t)>t$ for every $t\in I_n$. In contrast, when $n\in E_-$ the restriction of $\rho_x(g)$ to ${I}_n$ coincides with the conjugate of $f$ under the orientation reversing map $t\mapsto -t-n$, and therefore satisfies $\rho_x(g)(t)<t$ for $t\in I_n$. Since both sets $E_+$ and $E_-$ are unbounded from above and from below, we deduce that $\rho_x(g)$ is unboundedly positive and unboundedly negative. \qedhere
\end{proof}

\begin{proof}[Proof of Theorem \ref{t-phi-sigma} \ref{i-phi-sigma-mixid}]
	We write $G = \Tsf(\varphi, \sigma)$. According to \cite[Remark 5.1]{Hull-Osin}, it is enough to prove that there does not exist a non-trivial element $w \in  \mathbb{Z} \ast G$ such that $G$ satisfies the mixed-identity $w=1$.	 Assume that $w\in \langle z \rangle * G$ is a non-trivial reduced word of the form $w(z)=z^{n_k}g_k\cdots z^{n_1}g_1$, with $g_1, \cdots, g_k\in G$ such that $w(h)=1$ for every $h\in G$. 
	By Lemma \ref{l-unboundedly-positive} we can embed $G$ as a dense subgroup of $\homeo_0(\R)$ such that all non-trivial elements are unboundedly positive (and unboundedly negative). By density this implies that $w(h)=1$ for every $h\in \homeo_0(\R)$, which is in contradiction with Lemma \ref{l-mix-id-homeo}. \qedhere
	\end{proof}

We now wish to prove that the group $\Tsf(\varphi, \sigma)$ contains free subgroups. To this end, recall that an action of a group $G$ on a compact space $Y$ is said to be \textbf{extremely proximal} if for every proper closed subset $Z\subsetneq Y$, there exists a point $y\in Y$ such that for every neighbourhood $V$ of $y$ there exists $g\in G$ such that $g(Z)\subset V$. If an action of a group $G$ on a compact space is minimal and extremely proximal, a well-known ping-pong argument implies that $G$ admits non-abelian free subgroups \cite[Theorem 3.4]{Glas-EP}. Thus, it is enough to show the following.

\begin{prop}
Let $(X, \varphi, \sigma)$ be a minimal Stone $D_\infty$-system. Then the action of $\Tsf(\varphi, \sigma)$ on $Y^{\varphi, \sigma}$ is minimal and extremely proximal. 

\end{prop}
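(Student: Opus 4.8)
The plan is to treat the two properties separately, establishing minimality first and then bootstrapping from it and from the Thompson-like structure to extreme proximality.

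For minimality I would first recall from the proof of Lemma \ref{l-unboundedly-positive} that, since $(X,\varphi)$ is minimal, every leaf $\ell_x$ is dense in $Y^{\varphi,\sigma}$. As $\Tsf(\varphi,\sigma)$ preserves each leaf, the orbit of a point $y=\gamma_x(t_0)$ stays inside $\ell_x$; and since $\ell_x$ is dense, it is enough to prove that this orbit is dense in $\ell_x$. Given a target $\gamma_x(s_0)$, I would use Lemma \ref{l-charts-basis} to thicken a bounded open interval $I\ni t_0,s_0$ to a $(\varphi,\sigma)$-admissible pair $(C,I)$ with $x\in C$. In the chart $U_{C,I}$ the subgroup $F_{C,I}$ acts through $\rho_x$ as the standard action of $F_I$ on $I$, and $F_I$ is dense in $\homeo_0(I)$ (this density is exactly what is used in Lemma \ref{l-unboundedly-positive}); hence the $F_I$-orbit of $t_0$ is dense in $I$, and one can pick $g\in F_{C,I}$ with $f(t_0)$ arbitrarily close to $s_0$, so that $g(y)=\gamma_x(f(t_0))$ lies in any prescribed neighbourhood of $\gamma_x(s_0)$. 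Thus $\ell_x\subseteq\overline{G\cdot y}$, and minimality follows.

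For extreme proximality I would first reduce to the case where the closed set is the complement of a chart. Given a proper closed $Z$, its complement is nonempty open, hence contains a chart $U$ with $Z\cap U=\varnothing$, so $Z\subseteq Y^{\varphi,\sigma}\setminus U$. It therefore suffices to show that the closed set $Y^{\varphi,\sigma}\setminus U$ can be contracted to a point: there is a point $y$ such that for every neighbourhood $V$ of $y$ there is $g\in\Tsf(\varphi,\sigma)$ with $g(Y^{\varphi,\sigma}\setminus U)\subseteq V$, whence $g(Z)\subseteq V$. To produce such $g$ I would argue in two stages, exploiting the two directions of a chart. First, following the return-time decomposition used in the proof of Lemma \ref{l-first-return} (now carried out in $Y^{\varphi,\sigma}$), the complement $Y^{\varphi,\sigma}\setminus\overline{U}$ of a short chart $U=U_{C_0,I_0}$ is a finite disjoint union of charts, organised into finitely many flow-columns over a clopen partition $C_0=\bigsqcup_i C_i$ by first-return time $t_i$. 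Over each column I would define $g$ by $\operatorname{Id}\times f_i$ for a PL-dyadic homeomorphism $f_i$ of the length-$t_i$ interval fixing both endpoints (so that the pieces glue to a genuine element of $\Tsf(\varphi,\sigma)$ fixing the common floor), chosen to expand the $U$-part onto almost all of the column and to compress the complementary part into a thin sliver just below the floor. The resulting $g$ sends $Y^{\varphi,\sigma}\setminus U$ into a thin slab $U_{C_0,[a_0-\delta,a_0]}$, small in the flow direction but still carrying the full transversal $C_0$. In the second stage I would compress this slab in the transversal direction, using elements that flow different clopen pieces of the transversal by different return amounts, thereby mapping the slab into an arbitrarily small chart inside $V$ centred at a suitable point $y$.

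The hard part will be this second stage, the transversal compression, together with the bookkeeping required to make the whole construction one globally well-defined element of $\Tsf(\varphi,\sigma)$. Since every element acts as $\operatorname{Id}\times f$ in charts, it fixes the transversal coordinate locally, so genuinely shrinking $C_0$ forces one to route distinct parts of the transversal around the flow through different return maps and reassemble them, all while respecting PL-dyadic data, $(\varphi,\sigma)$-admissibility, and the identifications imposed by $\hat\sigma$ (equivalently, one may work $\hat\sigma$-equivariantly on $Y^\varphi$ via Proposition \ref{p-lift}). Checking that such elements exist, and that minimality of $(X,\varphi)$ keeps all the return times involved bounded, is where the bulk of the work lies; once this is in place, the two stages compose to yield the contraction of $Y^{\varphi,\sigma}\setminus U$, and hence extreme proximality.
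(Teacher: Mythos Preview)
Your minimality argument is fine and matches the paper's.

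For extreme proximality, however, you have made the problem harder than it is and left the genuinely hard part undone. Your reduction ``it suffices to contract the closed set $Y^{\varphi,\sigma}\setminus U$ to a point'' forces you into a transversal compression which, as you correctly observe, is awkward because every element is locally of the form $\operatorname{Id}\times f$. You have not actually carried out this step, and it is not clear it can be done cleanly.

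The paper avoids this difficulty entirely by a simple change in the order of choices. In the definition of extreme proximality the neighbourhood $V$ of the attracting point $y$ is given \emph{first}; the paper then chooses a chart $U_{C,I}$ with $y\in U_{C,I}$ and $\overline{U_{C,I}}\subset V$. Now perform the return-time decomposition of $Y^{\varphi,\sigma}\setminus\overline{U_{C,I}}$ into charts $U_{C_i,I_i}$. The point is that, since $\overline{U_{C,I}}\subset V$, the set $V$ is strictly larger than the chart and therefore overlaps each complement chart $U_{C_i,I_i}$ in an $\varepsilon$-collar near its boundary. On the other hand $Z$, being disjoint from $V$, meets each $U_{C_i,I_i}$ in a set $Z_i$ compactly contained in the interior (away from the boundary collar). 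So for each $i$ one only needs to push $Z_i$ into the $V$-collar inside the same chart $U_{C_i,I_i}$, which is a pure flow-direction compression: pick $g_i\in F_{C_i,I_i}$ doing this, and set $g=g_1\cdots g_n$. No transversal compression is needed.

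In short, the missing idea is: do not try to push the complement of a chart into a small $V$; instead, push $Z$ into the given $V$ after choosing the chart strictly inside $V$, so that flow-direction moves within each complement chart suffice.
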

\begin{proof}
The minimality of $(X, \varphi, \sigma)$ implies that all leaves of $Y^{\varphi, \sigma}$ are dense. Since it is not difficult to see (for instance from part \ref{i-dense-image} in Lemma \ref{l-unboundedly-positive}) that the $\Tsf(\varphi, \sigma)$-orbit of every $y\in Y^{\varphi, \sigma}$ is dense in its leaf, it follows that the action of $\Tsf(\varphi, \sigma)$ on $Y^{\varphi, \sigma}$ is minimal.  To show that it is extremely proximal, we will use an  argument similar to the proof of Lemma \ref{l-first-return}. Let $Z\subsetneq Y^{\varphi, \sigma}$ be a closed subset, and fix $y\in Y^{\varphi, \sigma}\setminus Z$. Let $V$ be a neighbourhood of $y$ such that $V\cap Z=\varnothing$. Let $U_{C, I}$ be an open chart containing $y$ and such that $\overline{U_{C, I}}\subset V$. Then, using that every leaf that exits $U_{C, I}$ in both directions must return to it,  and reasoning in a similar way as in the proof of Lemma \ref{l-first-return}, we can find a decomposition of the complement  $Y^{\varphi, \sigma}\setminus \overline{U_{C, I}}$ as a disjoint union of open charts
\[Y^{\varphi, \sigma}\setminus \overline{U_{C, I}}=\bigsqcup_{i=1}^n U_{C_i, I_i}, \] 
such that the boundary of $\partial U_{C_i, I_i}$ is contained in the boundary of $\partial U_{C, I}$ (the only difference here from Lemma \ref{l-first-return}  is that it may happen that the two components of the boundary of $U_{C_i, I_i}$ corresponding to the two extreme points of $I_i$ are both contained in a same component of $\partial U_{C, I}$ corresponding to one of the endpoints of $I$). Set $Z_i=Z\cap U_{C_i, I_i}$, and $V_i=V\cap U_{C_i, I_i}$. Note that since $\partial U_{C_i, I_i}\subset V$, if we set $I_i=(a_i, b_i)$, then there exists $\varepsilon >0$ such that the set $V_i$ contains $U_{C_i, (a_i, a_i+\varepsilon)}$ and $U_{C_i, (b_i, b_i-\varepsilon)}$ for some $\varepsilon >0$, while the set $Z_i$ is contained in $U_{C_i, (a_i+\varepsilon, b_i-\varepsilon)}$. Using that the group $F_{I_i}$ acts without fixed points on $I_i$, we can find for every $i=1,\ldots, n$ an element $g_i\in F_{C_i, I_I}$ such that $g_i(Z_i)\subset V_i$.   Then the element $g=g_1\cdots g_n\in \Tsf(\varphi, \sigma)$ satisfies $g(Z)\subset Z$. Since $V$ was an arbitrary neighbourhood of $y$, the conclusion follows. \qedhere
\end{proof}
This  concludes the proof of Theorem \ref{t-phi-sigma}.

\begin{remark}
	Most other results on the groups $\Tsf(\varphi)$ from \cite{MB-T-flows} can be adapted to $\Tsf(\varphi, \sigma)$ without much effort. In particular if the Stone $D_\infty$-system $(X, \varphi, \sigma)$ is minimal then the group $\Tsf(\varphi, \sigma)$ is simple, and if the underlying Stone system $(X, \varphi)$ is conjugate to a subshift then the group $\Tsf(\varphi, \sigma)$ is finitely generated.  We do not explain the details of the proofs of these results, since they can be proven by a routine adaptation of the arguments in \cite{MB-T-flows}, by replacing charts in $Y^\varphi$ and the subgroups $F_{C, I}$ by their analogues for $\Tsf(\varphi, \sigma)$, modulo some mild modifications.\end{remark}

\begin{remark} \label{r-HLNR}
Here we elaborate on the connection between the family of groups $\Tsf(\varphi, \sigma)$ and the groups defined by Hyde and Lodha in \cite{HL}. The starting point of their constructions is a bi-infinite sequence $\rho=(u_n)_{n\in \Z}$, where each element $u_n$ belongs to the finite alphabet  $A=\{a, a^{-1}, b, b^{-1}\}$. The sequence $\rho$ is required to satisfy suitable conditions (see \cite{HL}), among which is the requirement that for every finite word $w=w_0\cdots w_n$ of consecutive elements of $\rho$, the formal inverse $w^{-1}=w_n^{-1}\cdots w_1^{-1}$ should also appear in $\rho$. To every such  sequence $\rho$, they associate a group $G_\rho\le \homeo_0(\R)$ generated by a finite set of explicit piecewise linear homeomorphisms of $\R$ with an infinite discrete set of discontinuity points for the derivative. Roughly speaking, each letter $u_n$ of $\rho$ prescribes how each generator acts in restriction to the interval $[n, n+2]$.  A more intrinsic description of which homeomorphisms of $\R$ belong to the group $G_\rho$ was later obtained by Hyde-Lodha-Navas-Rivas \cite[Theorem 0.8]{HLNR}. 
The condition that $\rho$ is closed under inverses ensures that elements of $G_\rho$ are both unboundedly positive and negative, which plays a  role in the combinatorial analysis in \cite{HL, HLNR}.

Now, given a sequence $\rho$  as above, let  $X\subset A^\Z$ be the subshift given by the orbit-closure of $\rho$ under the shift map, and let $\varphi$ be the restriction of the shift to $X$. The assumption that $\rho$ is closed under formal inverses implies that we can define an involution 
\[\sigma\colon X\to X, \quad \sigma ((v_n)_n)=(v_{-n-1}^{-1})_n.\]
One readily checks that the triple $(X, \varphi, \sigma)$ is a Stone $D_\infty$-system. The  
 description of the group $G_\rho$ provided by \cite[Theorem 0.8]{HLNR} essentially implies that $G_\rho$ coincides with the group $\Tsf(\varphi, \sigma)$ associated to this system. 
\end{remark}

\section{A classification result of highly transitive actions} \label{sec-classification}

In this section we explain how Theorem  \ref{thm-intro-main} can be used to classify all highly transitive actions of certain groups. Below for simplicity we focus of the case of the Higman--Thompson groups \cite{Hig-fp}, but the argument generalizes to other classes of groups, see Remark \ref{rmq-general-Vd}.

 \subsection{Preliminaries} \label{subsec-prelim-V}
 Throughout this section we let $X_d=\{0,\cdots,d-1\}^\N$ be the Cantor set of one sided sequences over a $d$-letter alphabet. Given a finite word $w\in \{0,\cdots, d-1\}^*$ we let $C_w\subset X_d$ be the cylinder subsets of sequences that admit $w$ as a prefix. Recall that the Higman--Thompson group $V_d$ is the group of all homeomorphisms of $X_d$ such that there exist two collections of finite words $w_1,\cdots, w_k, u_1,\cdots, u_k\in\{0,\ldots, d-1\}^*$ with $X_d=C_{w_1}\sqcup \cdots \sqcup C_{w_k}=C_{u_1}\sqcup\cdots \sqcup C_{u_k}$ and such that $g(w_i\xi)=u_i\xi$ for all $i=1\cdots k$ and all $\xi\in X_d$. 
 It follows easily from the definition that the $V_d$-orbits in $X_d$ coincide with the cofinality classes of sequences, meaning that two sequences $\xi=x_0x_1\cdots, \eta=y_0y_1\cdots$ are in the same $V_d$-orbit if and only if there exists $m,n \geq 0$ such that $x_{m+i}=y_{n+i}$ for all $i\ge 0$. Moreover the action of $V_d$ on each orbit is highly transitive. Note also that the action of $V_d$ on $X_d$ is topologically nowhere free, and hence the stabilizer in $V_d$ of every point of $X_d$ is a confined subgroup of $V_d$.

 The following description of the dynamics of individual elements of $V_d$ is due to Brin  \cite[\S 10.7]{Brin-nV}.

 \begin{prop}\label{p-brin}
 For every  $g\in V_d$, there exists a decomposition $X_d=Y_g\sqcup Z_g$ into two $g$-invariant clopen sets such that the following hold:
 \begin{enumerate}[label=(\roman*)]
\item The restriction of $g$ to $Y_g$ has finite order;
\item The subset $\Per_{\operatorname{hyp}}(g)\subset Z_g$ of points with a finite $g$-orbit is finite, and there exists a partition $\Per_{\operatorname{hyp}}(g)=\Att(g)\sqcup \Rep(g)$ such that for every neighbourhood $U$ of $\Att(g)$ and $V$ of $\Rep(g)$, there exists $n>0$ such that $g^{n}(Z_g\setminus V)\subset U$ and $g^{-n}(Z_g\setminus V)\subset U$. 
\end{enumerate}
Moreover $Z_g\neq\varnothing$ if and only if $g$ has infinite order, and in this case $\Att(g)$ and $\Rep(g)$ are both non-empty. 
 \end{prop}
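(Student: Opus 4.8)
The plan is to derive the statement from the combinatorial description of $g$ by a tree pair, put into a form that makes the dynamics transparent. Recall that every $g \in V_d$ is given by a pair of finite complete prefix codes $W = \{w_1, \ldots, w_k\}$ and $U = \{u_1, \ldots, u_k\}$ together with a bijection $w_i \mapsto u_i$, so that $g(w_i \xi) = u_i \xi$ for all $\xi$. The three possibilities $|u_i| = |w_i|$, $|u_i| > |w_i|$, $|u_i| < |w_i|$ govern whether $g$ is neutral, locally contracting, or locally expanding on the cylinder $C_{w_i}$: a direct computation shows that a configuration with $u_i = w_i s$ forces a fixed point $w_i s^\infty$ (the infinite concatenation $w_i s s s \cdots$) which is attracting, while the symmetric configuration $w_i = u_i t$ forces a repelling fixed point $u_i t^\infty$. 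The first step is to recall, as the content of \cite[\S 10.7]{Brin-nV}, that after subdividing the cylinders compatibly one may represent $g$ by a \emph{revealing} tree pair, in which the attracting and repelling behaviour of $g$ is encoded directly by the leaves: each leaf is either neutral, or lies on a \emph{source--sink} chain along which $g$ strictly shortens (resp.\ lengthens) prefixes.

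From such a representation I would read off the decomposition. Define $Y_g$ to be the union of the neutral cylinders, equivalently the largest $g$-invariant clopen set on which $g$ has finite order, and set $Z_g = X_d \setminus Y_g$; both are clopen and $g$-invariant. On $Y_g$ the element $g$ permutes finitely many equal-depth cylinders by a prefix relabelling that preserves tails, hence $g|_{Y_g}$ has finite order, giving \ref{p-brin} part (i). The points of $Z_g$ with finite orbit are exactly the eventually periodic sequences supported by the source--sink chains; since there are finitely many chains, each contributing finitely many periodic prefixes, $\Per_{\operatorname{hyp}}(g) := \Per(g) \cap Z_g$ is finite. I would then split it as $\Att(g) \sqcup \Rep(g)$ according to whether $g$, raised to the relevant period $q$, locally contracts or expands at the point, using the prefix-length computation above applied to $g^q$.

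The heart of the matter is the uniform attraction statement in (ii). Here I would argue that, away from any neighbourhood $V$ of $\Rep(g)$, the revealing structure forces $g$ to strictly increase the length of the prefix shared with $\Att(g)$, so that each point of $Z_g \setminus V$ eventually enters a fixed forward-invariant neighbourhood $U_0 \subset U$ of $\Att(g)$. Since $Z_g \setminus V$ is compact and $g$ is a homeomorphism, continuity lets me cover it by finitely many patches on which a single iterate $g^{n_i}$ already lands in $U_0$; taking $n = \max_i n_i$ and using forward-invariance of $U_0$ yields $g^n(Z_g \setminus V) \subset U$, and the same argument applied to $g^{-1}$ (whose attractors are precisely $\Rep(g)$) gives the statement for $g^{-n}$. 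I expect this uniformity to be the main obstacle, since it is exactly where one must genuinely use that the tree pair is revealing rather than arbitrary: for a non-revealing pair the naive prefix-length bookkeeping need not be monotone, and a point could linger indefinitely before escaping.

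Finally, for the last sentence: if $g$ has finite order then every orbit is finite and no contraction or expansion occurs, so $Z_g = \varnothing$. Conversely, if $g$ has infinite order then $Y_g \neq X_d$, since otherwise $g$ would have finite order by (i), and hence $Z_g \neq \varnothing$; picking any $x \in Z_g$, the orbits $g^n(x)$ and $g^{-n}(x)$ accumulate on attracting, resp.\ repelling, periodic points by (ii), which forces both $\Att(g)$ and $\Rep(g)$ to be non-empty.
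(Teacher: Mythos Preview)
The paper does not give its own proof of this proposition: it is stated as a result due to Brin and attributed to \cite[\S 10.7]{Brin-nV} without further argument. Your proposal is a sketch of exactly the approach in that reference, namely putting $g$ into Brin's \emph{revealing pair} form and reading off the decomposition $X_d=Y_g\sqcup Z_g$ from the neutral versus source--sink leaves; so there is nothing to compare beyond saying that you are reconstructing the cited proof rather than offering an alternative, and your outline is correct in spirit.

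Two small points worth tightening if you flesh this out. First, your description of $\Per_{\operatorname{hyp}}(g)$ as ``the eventually periodic sequences supported by the source--sink chains'' is slightly off: eventually periodic points have infinite $g$-orbit in general, and what you want is the (finitely many) genuinely periodic points, one orbit per attracting or repelling circuit in the revealing pair. Second, in the last paragraph your appeal to (ii) to show $\Att(g),\Rep(g)\neq\varnothing$ is not circular, but note that (ii) only gives a single $n$ for each pair $(U,V)$, not convergence along all large $n$; to extract an accumulation point you should shrink $U$ and pass to a subsequence, or argue more directly that a non-empty clopen $Z_g$ in a Cantor set is infinite while $\Per_{\operatorname{hyp}}(g)$ is finite, so some $x\in Z_g\setminus\Rep(g)$ exists and its forward orbit must meet every neighbourhood of $\Att(g)$.
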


Points in $\Per_{\operatorname{hyp}}(g)$ are called the \textbf{hyperbolic periodic points} of $g$, and $\Att(g)$ and $\Rep(g)$ are respectively the attractive and repelling ones.

Given an action of a group $G$ on a compact space $X$, we say that a closed subset $C\subset X$ is \textbf{compressible} if there exists a point $x\in X$ such that for every neighbourhood $U$ of $x$ there exists $g\in G$ such that $g(C)\subset U$.  Similarly a Borel probability measure $\mu\in \operatorname{Prob}(X)$ is compressible if the closure of $G\cdot \mu$ in the weak-* topology of $\operatorname{Prob}(X)$ contains a Dirac mass $\delta_x$. Recall that the action of $G$ on $X$ is \textbf{proximal} if every pair of points is compressible. When this holds, every finite subset is compressible  \cite[Ch. VI Cor. 1.4]{Margulis}.  The action is \textbf{strongly proximal} if every $\mu\in \operatorname{Prob}(X)$ is compressible. Recall also from Section \ref{sec-appl} that the action is extremely proximal if every proper closed subset $C\subsetneq X$ is compressible. Extreme proximality implies strong proximality, which implies proximality, and the reverse implications are not true in general \cite{Glas-EP}.

In our present setting we deduce from Proposition \ref{p-brin} the following equivalence for subgroups of $V_d$, which was inspired by a reading of \cite[\S 5]{Hu-Mil}.

\begin{prop}\label{p-V-prox}
Let $H\le V_d$ be a subgroup which acts minimally on $X_d$. Suppose that $H$ contains an element of infinite order.  Then the following are equivalent:
\begin{enumerate}[label=(\roman*)]
\item \label{i-prox} the action of $H$ on $X_d$ is proximal;
\item the action of $H$ on $X_d$ is strongly proximal;
\item \label{i-EP} the action of $H$ on $X_d$ is extremely proximal. 
\end{enumerate}
\end{prop}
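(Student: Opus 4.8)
The plan is to prove $\ref{i-EP}\Rightarrow\ref{i-prox}$ for free and then to establish $\ref{i-prox}\Rightarrow\ref{i-EP}$, which is the only real content: as recalled just before the statement, extreme proximality implies strong proximality, which implies proximality, so it remains to show that a minimal proximal action with an infinite-order element compresses every proper closed subset $C\subsetneq X_d$ to a point. The conceptual heart is the following clean case. Suppose $H$ contains an element $\gamma$ with \emph{north--south dynamics}, i.e. two fixed points $p_-\neq p_+$ with $p_+$ attracting and $p_-$ repelling, such that $\gamma^n\to p_+$ uniformly on $X_d\setminus V$ for every clopen neighbourhood $V$ of $p_-$. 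Then extreme proximality follows from minimality alone: given a proper closed $C$, the orbit $H\cdot p_-$ is dense, hence not contained in the proper closed set $C$, so there is $h\in H$ with $h(p_-)\notin C$; the conjugate $h\gamma h^{-1}$ has repelling fixed point $h(p_-)\notin C$, so $C$ avoids a clopen neighbourhood of it and $(h\gamma h^{-1})^n(C)$ shrinks into every neighbourhood of $h(p_+)$, compressing $C$ to the point $h(p_+)$.

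The work is therefore to manufacture contracting dynamics from the hypotheses, and here I would use Proposition \ref{p-brin} together with proximality. Starting from an infinite-order $g\in H$ and replacing it by a power, Proposition \ref{p-brin} yields a $g$-invariant clopen decomposition $X_d=Y_g\sqcup Z_g$ with $g|_{Y_g}$ of finite order, finite sets $\Att(g)$ and $\Rep(g)$ inside $Z_g$, and the key contraction property that $g^n(Z_g\setminus V)$ lies in any prescribed neighbourhood of $\Att(g)$ once $n$ is large, for any neighbourhood $V$ of $\Rep(g)$; moreover each attracting fixed point carries a clopen cylinder basin on which $g$ contracts. Two features separate this from the clean case: $\Att(g)$ may contain more than one point, and the finite-order part $Y_g$ is not contracted. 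The first is handled by proximality, which via \cite[Ch.~VI Cor.~1.4]{Margulis} makes the finite set $\Att(g)$ compressible to a single point, so after pushing $C$ into a small neighbourhood of $\Att(g)$ a further element collapses this neighbourhood towards one point. The second is handled by minimality: since $Z_g\neq\varnothing$, the clopen set $Y_g$ is proper, so $\bigcap_{h\in H}h(Y_g)$ is an $H$-invariant closed subset of a proper clopen set and hence empty; by compactness finitely many translates $Z_{g_1},\dots,Z_{g_m}$ of $Z_g$ (the $g_j$ being conjugates of $g$) cover $X_d$, so every point lies in the contracting region of some conjugate.

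Combining these, I would prove the sharper statement that any proper closed $C$ can be compressed into an arbitrarily small clopen set, by a finite merging procedure: cover $C$ by the regions $Z_{g_j}\setminus V_j$, use minimality to move each repelling set off $C$ as in the clean case, contract each piece by a high power of the corresponding conjugate, and use proximality to fuse the finitely many resulting attracting points into one. I expect the main obstacle to be precisely this bookkeeping: organising the successive contractions along the cover so that contracting one piece does not undo the previous ones, while keeping all the finitely many attracting and repelling points simultaneously under control. In effect, proximality is used exactly to collapse the finite sets of poles and the finite-order part $Y_g$ that obstruct a genuine north--south element; once such an element is available, the clean-case argument shows that minimality already forces extreme proximality, which is also what makes the three proximality notions coincide under these hypotheses.
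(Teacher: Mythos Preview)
Your strategy is the paper's: cover $X_d$ by finitely many sets $Z_{g_1},\ldots,Z_{g_n}$ (the paper gets these from $\bigcap_{g\in H}Y_g=\varnothing$ by minimality rather than from conjugates of one $g$, but this is equivalent), then contract iteratively and use proximality to collapse the finite residue. However, the bookkeeping you flag as the main obstacle is precisely where the argument lives, and the concrete plan you sketch---``move each repelling set off $C$'' separately, then ``contract each piece by a high power of the corresponding conjugate''---does not work as stated: applying $g_j^N$ moves all of $C$, not just the piece in $Z_{g_j}$, and may land it on $\Rep(g_{j'})$ for $j'\neq j$; and the pieces $Z_{g_j}$ overlap, so there is no decomposition of $C$ into independently contractible parts.

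The paper's resolution of this is short and worth knowing. After replacing each $g_i$ by a power, arrange $g_i|_{Y_{g_i}}=\operatorname{id}$. Using proximality and minimality, move the whole finite set $\Sigma=\bigcup_i\Per_{\operatorname{hyp}}(g_i)$ off $C$ in one stroke: pick $f\in H$ with $f(\Sigma)\cap C=\varnothing$ and set $C_0=f^{-1}(C)$, so $C_0\cap\Sigma=\varnothing$. Now work in the Vietoris topology on closed subsets and iterate by taking limits rather than fixed high powers: a Vietoris limit of $g_1^{n_k}(C_0)$ yields $C_1\subset (C_0\cap Y_{g_1})\cup\Per_{\operatorname{hyp}}(g_1)$, since $g_1$ is the identity on $Y_{g_1}$ and $C_0\cap Z_{g_1}$ avoids $\Rep(g_1)$. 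The set $C_1$ may now meet $\Rep(g_2)$, but only in isolated points coming from $\Per_{\operatorname{hyp}}(g_1)$ (the bulk $C_0\cap Y_{g_1}$ is still disjoint from $\Sigma$), so a further Vietoris limit along powers of $g_2$ gives $C_2\subset (C_0\cap Y_{g_1}\cap Y_{g_2})\cup\Per_{\operatorname{hyp}}(g_1)\cup\Per_{\operatorname{hyp}}(g_2)$. After $n$ steps $C_n\subset\Sigma$ is finite, and proximality finishes. The two ideas you were missing are: clear all hyperbolic periodic points at once at the start, and pass to Vietoris limits so that the ``identity on $Y_{g_i}$'' normalisation cleanly separates what is fixed from what is contracted.
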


 \begin{proof}
We only have to prove the implication \ref{i-prox}$\Rightarrow$\ref{i-EP}. First of all note that $\bigcap_{g\in H} Y_{g}$ is a closed $H$-invariant subset of $X_d$. Moreover since $H$ contains an element of infinite order, we have $\bigcap_{g\in H} Y_{g} \subsetneq X_d$. Thus by minimality of the $H$-action we have $\bigcap_{g\in H} Y_{g} =\varnothing$. By the finite intersection property for compact sets, we can find $g_1,\cdots, g_n\in H$ such that $\bigcap_{i=1}^n Y_{g_i}=\varnothing$. Without loss of generality we may assume that all $g_i$'s have infinite order (indeed if $g_i$ has finite order then we have $Y_{g_i}=X_d$ and we can simply remove $g_i$ without changing the conclusion). Moreover, after replacing each $g_i$ by one of its powers, we can assume that $g_i$ is the identity on $Y_{g_i}$.

Now let $C\subsetneq X_d$ be a closed subset of $X_d$, and denote by $\mathcal{O}_C$ the closure of the $H$-orbit of $C$ in the space of all closed subset of $X_d$ (with respect to the Vietoris topology). We aim to show that $\mathcal{O}_C$ contains a singleton. Since finite subsets are compressible by $H$ by the assumption that the $H$-action is proximal, it is enough to show that $\mathcal{O}_C$  contains a subset of the finite set $\Sigma :=\bigcup_{i=1}^n \Per_{\operatorname{hyp}}(g_i)$.   
First observe that by proximality and minimality of the $H$-action, we can find $f\in H$ such that  $f(\Sigma)\subset X_d\setminus C$, so that $C_0:=f^{-1}(C)  \in \mathcal{O}_C$ satisfies $C_0\cap \Sigma=\varnothing$. Now by Proposition \ref{p-brin} we can find a sequence $n_i$ of positive integers tending to $\infty$ such that $g_1^{n_i}(C_0)$ converges in $\mathcal{O}_C$ to a closed subset $C_1 \in  \mathcal{O}_C$ such that $C_1\subset (C_0\cap Y_{g_1})\cup \Per_{\operatorname{hyp}}(g_1)$. Note that since $C_0$ does not intersect $\Rep(g_2)$, every point in $C_1\cap \Rep(g_2)$ must be isolated in $C_1$. So again we can find a sequence $(n_i)$  such that $g_2^{n_i}(C_1)$ converges in $\mathcal{O}_C$  to a closed subset $C_2$ such that $C_2\subset (C_0\cap Y_{g_1}\cap Y_{g_2}) \cup \Per_{\operatorname{hyp}}(g_1) \cup \Per_{\operatorname{hyp}}(g_2)$. Proceeding in this way we can find for each $i=1\cdots, n$ a subset $C_i \in \mathcal{O}_C$ such that $C_i\subset (C_0\cap Y_{g_1}\cap\cdots \cap Y_{g_i})\cup \Sigma$. In particular $C_n\subset \Sigma$ is finite. As observed above, this proves the statement.  \qedhere
 \end{proof}

 \begin{remark}
 The assumption that $H$ has elements of infinite order is necessary. Indeed consider the subgroup $H\le V_d$ consisting of elements defined by partitions $X_d=C_{w_1}\sqcup \cdots \sqcup C_{w_k}=C_{v_i}\sqcup \cdots \sqcup C_{v_k}$ such the words $w_i$ and $v_i$ have the same length for every $i$. Then $H$ is an infinite locally finite group, isomorphic to a block diagonal limit of symmetric groups $\sym(d^n)$, and its action on $X_d$ is minimal, proximal, and  preserves the natural uniform Bernoulli measure on $X_d$. Hence this action is not extremely proximal. We also note that by a result of R\"over \cite{Rover-torsion}, every subgroup of $V_d$ without elements of infinite order is locally finite. 
 \end{remark}

 We will use the following consequence of Proposition \ref{p-V-prox}. Recall that the \textbf{germ} of an element $g\in V_d$ at a point $\xi\in X_d$ is the equivalence class $[g, \xi]$ of the pair $(g, \xi)$ under the equivalence relation defined by $(g_1, \xi_1)\sim (g_2, \xi_2)$ if $\xi_1=\xi_2$ and there exists a neighbourhood $U$ of $\xi_1$ such that $g_1|_U=g_2|_U$.  We say that a subgroup $H\le V_d$ \textbf{covers pairs of germs} in $V_d$ if for every $g\in V_d$ and every pair of points $\xi, \eta\in X_d$ there exists $h\in H$ such that  $[g, \xi] = [h, \xi]$ and $[g, \eta] = [h, \eta]$. Equivalently, $H$ covers pairs of germs if $V_d = (V_d)_{\xi, \eta}^0  H $ for every $\xi, \eta\in X_d$, where $(V_d)_{\xi, \eta}^0$ is the subgroup of $V_d$ consisting of elements acting trivially on an open subset containing $\xi, \eta$.

 From Proposition \ref{p-V-prox} we deduce the following. 
 
 \begin{prop}\label{prop-germ-prox}
Let $H\le V_d$ be a subgroup which covers pairs of germs in $V_d$. Then the action of $H$ on $X_d$ is minimal and extremely proximal.
 \end{prop}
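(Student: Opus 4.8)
The plan is to verify, for the subgroup $H$, the two standing hypotheses of Proposition~\ref{p-V-prox} — minimality of the action and the existence of an element of infinite order — together with proximality, and then to invoke that proposition to upgrade proximality to extreme proximality. The structural point throughout is that covering germs lets one transport the (well-understood) dynamics of $V_d$ into $H$ at finitely many prescribed points.

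First, minimality. Recall from \S\ref{subsec-prelim-V} that the $V_d$-orbits on $X_d$ are the cofinality classes, which are dense, so the $V_d$-action is minimal. Given $\xi\in X_d$ and a nonempty open set $U$, I would choose $g\in V_d$ with $g(\xi)\in U$; since $H$ covers pairs of germs, in particular it covers the single germ at $\xi$, so there is $h\in H$ with $[h,\xi]=[g,\xi]$, whence $h(\xi)=g(\xi)\in U$. Thus every $H$-orbit is dense and the $H$-action is minimal. For an element of infinite order I would start from any $g\in V_d$ of infinite order: by Proposition~\ref{p-brin} the set $\Att(g)$ is nonempty, and a point $x\in\Att(g)$ admits a cylinder neighbourhood mapped strictly into itself. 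Covering the germ at $x$ yields $h\in H$ agreeing with $g$ on a neighbourhood $O$ of $x$; replacing the cylinder by a high forward image $C_w=g^N(\cdot)\subset O$ (still a cylinder containing $x$ and mapped strictly into itself), I may assume $h=g$ on $C_w$, so that $h(C_w)=g(C_w)\subsetneq C_w$. Since $h$ is injective, the clopen sets $h^n(C_w)$ are then strictly decreasing, so $h^n\neq\mathrm{id}$ for all $n\ge 1$ and $h$ has infinite order.

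The heart of the argument is proximality, and the key observation is that covering pairs of germs forces the $H$-orbit and the $V_d$-orbit of any pair of points to coincide inside $X_d\times X_d$. Indeed, fix $\zeta_1,\zeta_2\in X_d$; for every $g\in V_d$ the defining property produces $h\in H$ with $[h,\zeta_i]=[g,\zeta_i]$, hence $h(\zeta_i)=g(\zeta_i)$ for $i=1,2$. Therefore $V_d\cdot(\zeta_1,\zeta_2)\subseteq H\cdot(\zeta_1,\zeta_2)$, the reverse inclusion being trivial, so the two orbits and their closures agree. It then suffices to know that $V_d$ acts proximally: for distinct $\zeta_1,\zeta_2$, pick disjoint cylinders $C_1\ni\zeta_1$, $C_2\ni\zeta_2$, fix a point $x$ with its nested length-$n$ cylinders $E^{(n)}\ni x$, and choose $g_n\in V_d$ carrying $C_1,C_2$ into two disjoint sub-cylinders of $E^{(n)}$, so that $g_n(\zeta_1),g_n(\zeta_2)\to x$. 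Hence $(x,x)\in\overline{V_d\cdot(\zeta_1,\zeta_2)}=\overline{H\cdot(\zeta_1,\zeta_2)}$, which is exactly compressibility of $\{\zeta_1,\zeta_2\}$ under $H$; thus $H$ acts proximally. Having established that $H$ is minimal, contains an element of infinite order, and acts proximally, Proposition~\ref{p-V-prox} immediately gives that the $H$-action is extremely proximal.

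The main subtlety to flag is that the clean orbit-equality trick only controls finite sets of points (here, pairs), so it cannot directly yield extreme proximality, which concerns arbitrary proper closed subsets $C\subsetneq X_d$: the germ data at two points says nothing about the infinitely many other points of $C$. Bridging this gap is precisely the role of Proposition~\ref{p-V-prox}, whose hypotheses (minimality plus an infinite-order element) are exactly what allows one to bootstrap from the finite-set statement (proximality) to the closed-set statement (extreme proximality). I therefore expect the bookkeeping in the two preliminary steps to be routine, with all the conceptual weight carried by this reduction.
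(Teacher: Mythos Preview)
Your proof is correct and follows essentially the same approach as the paper: you transfer minimality and proximality from $V_d$ to $H$ via the identical orbits on pairs, produce an infinite-order element in $H$ by matching the germ at a hyperbolic attractive fixed point, and then invoke Proposition~\ref{p-V-prox}. The paper's proof is terser (it simply notes that matching the germ at such a point forces infinite order, without the explicit decreasing-cylinder argument), but the structure and key ideas coincide.
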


 \begin{proof}
 The assumption implies in particular that $H$ has the same orbits as $V_d$  on pairs of distinct points. Since the action of  $V_d$ on $X_d$ is minimal and proximal, so is the $H$-action. Moreover if $g\in V_d$ is an element of infinite order that admits $\xi\in Z_g$ as a hyperbolic attractive fixed point, then any element $h\in H$ such that $[g, \xi] = [h, \xi]$ also has infinite order. Thus Proposition \ref{p-V-prox} implies that the $H$-action is extremely proximal.  \qedhere
 \end{proof}

 \subsection{Highly transitive actions of $V_d$}

 We now give the proof of the main result of this section.
 
 \begin{thm}\label{t-V-ht}
Every faithful and highly transitive action of $V_d$ on a set is conjugate to its action on an orbit in $X_d$. 

 \end{thm}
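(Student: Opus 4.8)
The plan is to show that, up to conjugacy, a faithful highly transitive action of $V_d$ is the action on some orbit in $X_d$, by identifying the point stabiliser with a stabiliser of a point of $X_d$. First I would record that $V_d$ is not partially finitary: $V_d$ is finitely generated and virtually simple, hence has no normal subgroup isomorphic to the infinite, non-finitely-generated group $\Alt_f(\Omega)$, so the hypotheses of Theorem \ref{thm-intro-main} and Corollary \ref{cor-intro-criteria} apply. Now let $V_d$ act faithfully and highly transitively on $\Omega$. High transitivity forces primitivity, so fixing $\omega\in\Omega$ the stabiliser $H:=(V_d)_\omega$ is a maximal subgroup and $\Omega\cong V_d/H$ as $V_d$-sets. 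Since the action of $V_d$ on each orbit of $X_d$ is highly transitive, every such point stabiliser $(V_d)_\xi$ is also maximal in $V_d$. Consequently it suffices to prove that $H\le (V_d)_\xi$ for some $\xi\in X_d$: maximality of $H$ then gives $H=(V_d)_\xi$ and hence $\Omega\cong V_d/(V_d)_\xi$, the action on the orbit of $\xi$. In other words, the whole statement reduces to producing a point of $X_d$ fixed by $H$.

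To produce such a point I would transport the confined subgroups coming from the Cantor action into the mystery action via Theorem \ref{thm-conf-ht-precise}. Because $V_d$ is faithful and not partially finitary, no non-trivial element of $V_d$ has finite support on $\Omega$ (otherwise the primitive group $V_d\le\Sym(\Omega)$ would contain a non-trivial finitary permutation, hence $\Alt_f(\Omega)$ by the Jordan--Wielandt Theorem \ref{thm-Jor-Wie}, contradicting that $V_d$ is not partially finitary). Thus in every application of Theorem \ref{thm-conf-ht-precise} to a confined subgroup $K\le V_d$ one has $r_1=0$, so $\Omega_{K,f}$ is finite of size at most $r-1$. For a point stabiliser $(V_d)_\xi$ one may use the confining subset of size two consisting of two non-trivial elements with disjoint supports in $X_d$ (as in Remark \ref{remark-fullgp}), whence $|\Omega_{(V_d)_\xi,f}|\le 1$. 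Therefore, on the $V_d$-invariant set $X_d^{\mathrm{good}}=\{\xi\in X_d:\ \Omega_{(V_d)_\xi,f}\neq\varnothing\}$ the assignment $\psi(\xi)=$ the unique point of $\Omega_{(V_d)_\xi,f}$ is a well-defined $V_d$-equivariant map into $\Omega$ (equivariance because $\Omega_{g(V_d)_\xi g^{-1},f}=g\cdot\Omega_{(V_d)_\xi,f}$). Assuming $X_d^{\mathrm{good}}\neq\varnothing$, its image is a non-empty $V_d$-invariant subset of $\Omega$, hence all of $\Omega$ by transitivity; restricting $\psi$ to any single $V_d$-orbit $\mathcal{O}\subseteq X_d^{\mathrm{good}}$ gives an equivariant surjection $\mathcal{O}\to\Omega$ whose fibres form a $V_d$-invariant partition of $\mathcal{O}$. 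Since the action on $\mathcal{O}$ is primitive and $\Omega$ is infinite, this partition must be into singletons, so $\psi|_{\mathcal{O}}$ is a bijection, exhibiting $\Omega$ as (conjugate to) the action on $\mathcal{O}$. Thus everything comes down to showing that $X_d^{\mathrm{good}}$ is non-empty.

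It therefore remains to rule out that $X_d^{\mathrm{good}}=\varnothing$, i.e.\ that every point stabiliser $(V_d)_\xi$ acts transitively on $\Omega$. Equivalently, $H$ then acts transitively on every $V_d$-orbit of $X_d$, and in particular minimally on $X_d$. I would try to upgrade this to the statement that $H$ \emph{covers pairs of germs}, by running the same transitivity analysis on the confined germ-pair stabilisers $(V_d)^0_{\xi,\eta}$ (their $\Omega$-finite-orbit sets being empty is exactly the condition $V_d=(V_d)^0_{\xi,\eta}H$ of Proposition \ref{prop-germ-prox}); once $H$ covers pairs of germs, Proposition \ref{prop-germ-prox} shows that the $H$-action on $X_d$ is minimal and extremely proximal. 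The final step, which I expect to be the main obstacle, is to derive a contradiction from the coexistence of two facts: on one hand $H$ acts faithfully and highly transitively on $\Omega\setminus\{\omega\}$, being a point stabiliser of a highly transitive action; on the other hand $H$ acts minimally and extremely proximally on $X_d$, so by Proposition \ref{p-V-prox} and the dynamics of Proposition \ref{p-brin} it contains infinite-order elements with genuine north--south behaviour. I expect to exploit such elements, together with the rigidity $|\Omega_{(V_d)_\xi,f}|\le 1$, to build a confined subgroup whose induced configuration on $\Omega$ violates high transitivity. The genuinely delicate subcase is when $H$ is locally finite (no infinite-order element, so Proposition \ref{p-V-prox} does not apply and extreme proximality can fail), such as the block-diagonal subgroups in the remark after Proposition \ref{p-V-prox}; these act minimally and proximally on $X_d$, and must be excluded by arguing directly that $V_d/H$ is then not highly transitive.

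Finally, for the converse direction there is nothing to prove: the natural action of $V_d$ on any orbit of $X_d$ is faithful and highly transitive, so the actions described above are precisely the faithful highly transitive actions of $V_d$ up to conjugacy.
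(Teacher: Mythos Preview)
Your outline tracks the paper's proof closely through the first half: you correctly reduce to the case where every $G_\xi$ acts (highly) transitively on $\Omega$, and you correctly identify that the goal is then to show germ-pair stabilisers act transitively so that some stabiliser covers pairs of germs and Proposition~\ref{prop-germ-prox} applies. However, the proposal has two genuine gaps.

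First, the passage from ``every $G_\xi$ is highly transitive on $\Omega$'' to ``$(V_d)^0_{\xi,\eta}$ is transitive on $\Omega$'' is not just a repetition of the same argument. When you apply Theorem~\ref{thm-conf-ht-precise} to $G_{\xi,\eta}$ inside the highly transitive group $G_\xi$, you must rule out the possibility that $G_{\xi,\eta}$ fixes a point $\omega'\in\Omega$. This requires Lemma~\ref{l-V-max}: one classifies the overgroups of $G_{\xi,\eta}$ in $G$ and checks that each candidate for $G_{\omega'}$ leads to a contradiction (either $G_{\omega'}=G_\xi$ or $G_\eta$, excluded by hypothesis, or $G_{\omega'}$ is the setwise stabiliser of $\{\xi,\eta\}$, whose coset action is not $2$-transitive). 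You do not mention this step.

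Second, and more seriously, you do not supply any final contradiction; you only say you ``expect'' to build one, and your worry about a locally finite $H$ shows the endgame is missing. In the paper, the key is that one obtains extreme proximality not just for $H=G_\omega$ but for $G_\Sigma$ for \emph{every} finite $\Sigma\subset\Omega$ (since $G^0_{\xi,\eta}$ is highly transitive on $\Omega$, one has $G=G^0_{\xi,\eta}G_\Sigma$ for all such $\Sigma$). This freedom to choose $\Sigma$ is essential: one picks $k\in\rist_G(U)$ with $k^2\neq 1$ and a point $\omega_1$ whose $k$-orbit has size $\ge 3$, sets $\omega_2=k(\omega_1),\ \omega_3=k(\omega_2)$, chooses $g\in G_{\omega_1,\omega_2}$ with $g(\omega_3)=\omega_3'\neq\omega_3$, and then uses extreme proximality of $G_{\{\omega_1,\omega_2,\omega_3'\}}$ on $X_d$ to find $h\in G_{\{\omega_1,\omega_2,\omega_3'\}}$ with $hg(U)\cap U=\varnothing$. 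Then $k':=(hg)k(hg)^{-1}$ commutes with $k$ (disjoint supports) yet sends $\omega_2$ both to $\omega_3$ and to $\omega_3'$, a contradiction. Your version, which only yields extreme proximality for the single subgroup $H=G_\omega$, does not give enough control to run this argument, and no alternative contradiction is proposed. The locally finite subcase you flag never arises: $G_\Sigma$ always covers the germ of a hyperbolic element, hence contains elements of infinite order.
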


In the proof we will invoke the following lemma, which is an immediate consequence of well-known properties of the action of $V_d$ on $X_d$.  Given $G=V_d$ and points $\xi_1, \cdots, \xi_n\in X_d$, we use the notation $G_{\xi_1,\ldots, \xi_n}$ for  the pointwise stabilizer of $\xi_1,\ldots, \xi_n$. 

\begin{lem}\label{l-V-max}
Let $G = V_d$. For every $\xi_1, \xi_2 \in X_d$, the proper subgroups of $G$ that contain $G_{\xi_1, \xi_2}$ are  $G_{\xi_1, \xi_2}, G_{\xi_1}, G_{\xi_2}$, and the stabilizer in $G$ of $\{\xi_1,\xi_2\}$.
\end{lem}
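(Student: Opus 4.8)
The plan is to analyse the lattice of subgroups $K$ with $G_{\xi_1,\xi_2}\le K\le G$ through the block structure of the $G$-action on the orbit $\Omega_0:=G\cdot(\xi_1,\xi_2)$ of the ordered pair. Writing $H=G_{\xi_1,\xi_2}$ and $O_i$ for the $G$-orbit of $\xi_i$, the set $\Omega_0\subseteq O_1\times O_2$ is a single $G$-orbit consisting of pairs of distinct points, and $H$ is the stabiliser of the base point $\omega_0=(\xi_1,\xi_2)$. By the standard correspondence between overgroups of a point stabiliser and blocks through that point, each $K$ with $H\le K\le G$ has the form $K=G_B$ for a unique block $B\ni\omega_0$, with $B=K\cdot\omega_0$. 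The four subgroups in the statement correspond respectively to the blocks $\{\omega_0\}$, the fibre $B_1=\{(\xi_1,\eta)\in\Omega_0\}$, the fibre $B_2=\{(\eta,\xi_2)\in\Omega_0\}$, and the swap block $\{(\xi_1,\xi_2),(\xi_2,\xi_1)\}$ (present precisely when $\xi_1,\xi_2$ lie in the same orbit), while $B=\Omega_0$ corresponds to $K=G$. So it suffices to show these are the only blocks through $\omega_0$.

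First I would record the consequences of high transitivity. Since $G=V_d$ acts highly transitively on each of its orbits, the action on $O_i$ is primitive and hence each $G_{\xi_i}$ is maximal in $G$. Moreover $G_{\xi_1}$ acts highly transitively, in particular primitively, on the $G_{\xi_1}$-orbit of $\xi_2$ (using the local flexibility of $V_d$ when $\xi_1,\xi_2$ lie in different orbits), so that $H=(G_{\xi_1})_{\xi_2}$ is maximal in $G_{\xi_1}$, and symmetrically in $G_{\xi_2}$. Finally $H$ fixes exactly the two points $\xi_1,\xi_2$ and acts highly transitively on the complement of $\{\xi_1,\xi_2\}$ inside each orbit; this local flexibility of $V_d$ is what will do the real work.

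Using maximality I would first dispose of the easy cases. If $G_{\xi_1}\le K$ then maximality of $G_{\xi_1}$ in $G$ forces $K=G_{\xi_1}$ or $K=G$, and symmetrically for $\xi_2$. In the remaining situation $K\cap G_{\xi_1}$ is a subgroup of $G_{\xi_1}$ containing $H$ but not equal to $G_{\xi_1}$, so maximality of $H$ in $G_{\xi_1}$ gives $K\cap G_{\xi_1}=H$, and likewise $K\cap G_{\xi_2}=H$. Consequently the stabilisers in $K$ of $\xi_1$, of $\xi_2$ and of $\omega_0$ all coincide with $H$, so both coordinate projections $p_1,p_2\colon B\to O_1,O_2$ are injective on $B=K\cdot\omega_0$; that is, $B$ is the graph of a partial bijection.

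The crux is then to prove that $B$ contains no pair $(\alpha,\beta)$ with $\alpha\notin\{\xi_1,\xi_2\}$. Here I would use that $H$ stabilises the block $B$ (it fixes $\omega_0\in B$). Given such a pair, $\alpha$ lies in the complement of $\{\xi_1,\xi_2\}$ in $O_1$, where $H$ is highly transitive; since $\beta\ne\xi_2$ by injectivity, applying an element of $H$ that fixes $\alpha$ but moves $\beta$ (available by high transitivity unless $\beta\in\{\xi_1,\xi_2\}$) would produce two distinct pairs of $B$ with the same first coordinate, contradicting injectivity of $p_1$. The leftover possibility $\beta=\xi_1$ is killed symmetrically: applying all of $H$, which fixes $\xi_1$, to $(\alpha,\xi_1)$ yields infinitely many pairs of $B$ sharing the second coordinate $\xi_1$, contradicting injectivity of $p_2$. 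Hence every pair of $B$ has first coordinate in $\{\xi_1,\xi_2\}$; injectivity of $p_1$ then leaves only $B=\{\omega_0\}$ or $B=\{\omega_0,(\xi_2,\xi_1)\}$, i.e. $K=H$ or $K=G_{\{\xi_1,\xi_2\}}$, completing the classification. The main obstacle is precisely this last step: it is essential that $H$ is not merely transitive but highly transitive on the complement of its fixed points, so that point stabilisers inside $H$ still act nontrivially; this, together with the fact that $\Omega_0$ consists of pairs of distinct points, is what forces the graph block $B$ to be trivial or the swap. A secondary point to handle with care is the across-orbit transitivity used when $\xi_1,\xi_2$ lie in distinct orbits (where the swap block is absent and $G_{\{\xi_1,\xi_2\}}=H$), which again rests on the local flexibility of $V_d$.
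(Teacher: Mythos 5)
Your proof is correct, but note that the paper itself gives no proof of this lemma: it is introduced as ``an immediate consequence of well-known properties of the action of $V_d$ on $X_d$'' and left at that. What you have done is supply the missing argument. Your route --- the standard correspondence between overgroups of the point stabilizer $H=G_{\xi_1,\xi_2}$ (for the transitive action of $G$ on the orbit $\Omega_0$ of the ordered pair) and blocks of imprimitivity through $(\xi_1,\xi_2)$ --- is a clean way to organize it. The ``well-known properties'' the paper alludes to are precisely the ones you invoke: $V_d$ acts highly transitively on each of its orbits; point stabilizers and two-point stabilizers still act highly transitively on the remaining points of each orbit and on the other orbits (via elements supported in small cylinders missing the fixed points); and $\fix(G_{\xi_1,\xi_2})=\{\xi_1,\xi_2\}$. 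Your maximality statements plus the graph/injectivity argument convert these facts into the subgroup classification; a more pedestrian route would run the same flexibility arguments directly on an arbitrary $K$ with $H\le K\le G$ by examining where elements of $K$ can send the pair $(\xi_1,\xi_2)$. The two are essentially equivalent, but your block formalism makes the bookkeeping transparent, including the degenerate cases (equal points, or $\xi_1,\xi_2$ in distinct orbits, where the swap block disappears and the setwise stabilizer collapses onto $H$).

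One step should be tightened. At the end you assert that injectivity of $p_1$ alone leaves $B=\{\omega_0\}$ or $B=\{\omega_0,(\xi_2,\xi_1)\}$. Strictly, it leaves $B=\{\omega_0\}$ or $B=\{\omega_0,(\xi_2,\beta)\}$ for some $\beta$, and you must still force $\beta=\xi_1$. Either run your crux argument symmetrically on second coordinates (so that $B$ consists of pairs with \emph{both} coordinates in $\{\xi_1,\xi_2\}$), or note that $H$ stabilizes $B$ and fixes $\omega_0$, hence fixes $(\xi_2,\beta)$, hence fixes $\beta$; since $\fix(H)=\{\xi_1,\xi_2\}$ and $\beta\neq\xi_2$, this gives $\beta=\xi_1$. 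This is a one-line fix using tools you already have in place, so it is an imprecision rather than a gap.
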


\begin{proof}[Proof of Theorem \ref{t-V-ht}]
In the sequel we denote $G = V_d$. Assume that $G$ acts highly transitively on $\Omega$. Given $\xi \in X_d$, the subgroup $G_\xi$ is a confined subgroup of $G$, and every pair of non-trivial elements with disjoint supports in $X_d$ forms a confining subset for $(G_\xi,G)$. Hence by Theorem \ref{thm-conf-ht-precise} (applied with $r=2$; and note that $G$ is not partially finitary), either $G_\xi$ fixes a unique point $\omega_\xi$ in $\Omega$ and acts highly transitively on the complement, or the action of $G_\xi$  on $\Omega$ is highly transitive.

 Suppose there exists $\xi$ such that the first possibility holds. Then $G_\xi$ being a maximal subgroup of $G$, we must have equality $G_\xi = G_{\omega_\xi}$, and it follows that the $G$-action on $\Omega$ is conjugate to the $G$-action on the orbit of $\xi$. Hence in that case the conclusion holds. In the sequel we assume that for all $\xi$ the action of $G_\xi$  on $\Omega$ is highly transitive, and we want to reach a contradiction. 

We fix a point $\xi \in X_d$. Given $\eta  \in X_d$, the subgroup $G_{\xi,\eta}$ is confined in $G_\xi$, and for the same reason as before there exists a confining subset with two elements. Hence we can apply Theorem \ref{thm-conf-ht-precise} again, but this time to the action of $G_\xi$  on $\Omega$. Suppose that there exists $\eta$ such that $G_{\xi,\eta}$ fixes a point $\omega$ of $\Omega$, i.e. $G_{\xi, \eta}\le G_\omega$. Then by Lemma \ref{l-V-max} we deduce that $G_\omega$ fixes $\xi$, or $G_\omega$ fixes $\eta$, or $G_\omega$ stabilizes  $\{\xi,\eta\}$. If $G_\omega$ fixes $\xi$ then by maximality of $G_\omega$ in $G$ we must have $G_\xi = G_{\omega}$, which is impossible because we make the assumption that the action of $G_\xi$  on $\Omega$ is highly transitive.  By the same argument $G_\omega$ cannot fix $\eta$. And similarly $G_\omega$ cannot stabilize the pair $\{\xi,\eta\}$, because otherwise the action of $G$ on $\Omega$ would be conjugate to the action of $G$ on the orbit of $\{\xi, \eta\}$, which is impossible as the latter is not even 2-transitive. Hence all possibilities lead to a contradiction, and hence there is no $\eta$ such that $G_{\xi,\eta}$ fixes a point in $\Omega$. Therefore by Theorem \ref{thm-conf-ht-precise} we deduce that the action of  $G_{\xi,\eta}$ on $\Omega$ is highly transitive for all $\xi, \eta  \in X_d$.

Since $G_{\xi, \eta}^0$ is normal in $G_{\xi, \eta}$, the action of $G_{\xi, \eta}^0$ on $\Omega$ is also highly transitive. This implies that  whenever $\Sigma$ is a finite subset of $\Omega$, we have $G = G_{\xi, \eta}^0 G_\Sigma$, where $G_\Sigma$ is the pointwise fixator of $\Sigma$; indeed given $g\in G$ there exists $h\in G_{\xi, \eta}^0$ which coincides with $g$ on $\Sigma$, so that $h^{-1}g\in G_\Sigma$. Since the equality $G = G_{\xi, \eta}^0 G_\Sigma$ holds for all distinct points  $\xi,\eta$, we infer that the subgroup $G_\Sigma$  covers pairs of germs in $G$. Therefore we deduce from Proposition \ref{prop-germ-prox} that the action of $G_\Sigma$ on $X_d$ is minimal and extremely proximal.

 In order to derive a contradiction from this point, we adapt an easy argument used in the proof of \cite[Lemma 3.8]{LBMB-trans-degree}. Let $U\subset X_d$ be a non-empty proper clopen subset, and choose an element $k\in \rist_G(U)$ such that $k^2\neq 1$, and a point $\omega_1\in \Omega$ whose $k$-orbit has order $\ge 3$. Set $\omega_2=k(\omega_1), \omega_3=k^2(\omega_1)$. Choose $g\in G$ such that $g(\omega_1)=\omega_1, g(\omega_2)=\omega_2$ and $\omega_3':=g(\omega_3)\neq \omega_3$, and set $\Sigma = \{\omega_1, \omega_2, \omega_3'\}$. Since $G_\Sigma$ acts minimally and extremely proximally, we can choose $h\in G_\Sigma$ such that $hg(U)\cap U=\varnothing$. Set $f=hg$ and $k'=fkf^{-1}$. Note that on the one hand we have $k'(\omega_1)=\omega_2$ and $k'(\omega_2)=\omega_3'$ because $f(\omega_i)=\omega_i$ for $1, 2$ and  $f(\omega_3)=\omega_3'$.  On the other hand $k'$ commutes with $k$ since $k'$ is supported in $f(U)$ and $U$ and $f(U)$ are disjoint. Therefore $k'(\omega_2)=k'k(\omega_1)=kk'(\omega_1)=k(\omega_2)=\omega_3$. So  $\omega_3 = \omega_3'$, and we have reached a contradiction. By the first paragraph above, this terminates the proof.
\end{proof}

\begin{remark} \label{rmq-general-Vd}
The attentive reader will have noticed that the only important point of the proof of Theorem \ref{t-V-ht} where properties of the group $V_d$ are used is Proposition \ref{prop-germ-prox} (which, in turns, relies on Proposition \ref{p-V-prox}). Actually the proof of Theorem \ref{t-V-ht} works for every group of homeomorphisms of a compact space that fulfils the conclusions of Lemma \ref{l-V-max}  and Proposition \ref{prop-germ-prox}. 
\end{remark}
 
 \section{Invariant random subgroups and highly transitive actions} \label{sec-irs}
 
 The purpose of this section is to prove the following. Recall that an IRS of a group $G$ is a $G$-invariant Borel probability measure on $\sub(G)$ .
 
 \begin{prop} \label{prop-IRS-ht}
 	Let $G$ be a countable group that is not partially finitary, and let $\mu$ be an IRS of $G$ such that $\mu(\{1\})=0$. Suppose that $G$ admits a faithful and highly transitive action on a set $\Omega$. Then for $\mu$-almost every $H\in \sub(G)$ the action of $H$ on $\Omega$ is highly transitive.
 \end{prop}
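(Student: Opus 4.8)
The plan is to regard $\mu$ as a random subgroup $H$ and to exploit that \emph{every finite-arity structure canonically attached to $H$ is exchangeable}, so that de Finetti's theorem applies. First I would reduce to the case where $\mu$ is ergodic (the event ``$H$ acts highly transitively on $\Omega$'' is measurable and conjugation-invariant, and since $\{1\}$ is invariant and $\mu(\{1\})=0$, almost every ergodic component gives $\{1\}$ measure $0$), and note that $\Omega$ is countable. The underlying \textbf{exchangeability principle} is this: if $\Phi(H)\subseteq \Omega^m$ is any $G$-equivariant measurable assignment (an orbit $H\cdot\bar\omega$, the fixed-point set $\fix(H)$, a uniqueness operation, \dots), then the law of $\Phi(H)$ under $H\sim\mu$ is invariant under the diagonal $\Sym(\Omega)$-action. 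It is $G$-invariant because $\mu$ is; and since $G$ is dense in $\Sym(\Omega)$ and $\Sym(\Omega)$ acts continuously on the relevant compact space of subsets, the stabiliser of this law is a closed subgroup containing $G$, hence all of $\Sym(\Omega)$. Finally, since $G$ is not partially finitary, Theorem~\ref{thm-Jor-Wie} shows $G$ contains \emph{no} nontrivial finitary permutation, so every nontrivial $g\in G$ has infinite support; this is the one place the hypothesis enters.

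The core is a \emph{Transitivity Lemma}: for any IRS $\Lambda$ of a countable, faithfully and highly transitively acting, finitary-free group $\Gamma\le\Sym(\Theta)$ with $\Lambda(\{1\})<1$, almost every $K$ is transitive on $\Theta$. To prove it, fix $\theta_1\neq\theta_2$ and set $B=K\theta_1$. The indicators $(\mathbf{1}[\theta\in B])_{\theta\in\Theta}$ are exchangeable, so by de Finetti they are conditionally i.i.d.\ Bernoulli$(Q)$. If $g\in K$ then $gB=B$; but for infinite-support $g$ and $B$ i.i.d.$(q)$ with $q\in(0,1)$ one has $\mathbb{P}(gB=B\mid Q=q)=0$ (membership would have to be constant along an infinite $g$-orbit, or agree across infinitely many nontrivial finite orbits). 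As all nontrivial $g\in\Gamma$ have infinite support, on $\{Q\in(0,1)\}$ almost surely $K=1$, forcing $Q=0$ — a contradiction, so $Q\in\{0,1\}$ a.s. Writing $\rho=\mathbb{P}(\theta_2\in B)=\mathbb{E}Q=\mathbb{P}(Q=1)$, if $\rho<1$ then $\{Q=0\}$ has positive measure, where $B=\{\theta_1\}$, i.e.\ $\theta_1\in\fix(K)$; hence $\fix(K)$ has i.i.d.$(1-\rho)$ membership, so for nontrivial $g$ the inclusion $g\in K\Rightarrow \fix(K)\subseteq\fix(g)$ gives $\mathbb{P}(g\in K)\le \rho^{|\supp(g)|}=0$, again forcing $K=1$ a.s.\ and contradicting $\Lambda(\{1\})<1$. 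Thus $\rho=1$ for every pair, and a countable intersection yields transitivity. Applying this with $\Gamma=G$, $\Theta=\Omega$, $\Lambda=\mu$ (valid since $\mu(\{1\})=0$) gives that $\mu$-a.e.\ $H$ is transitive.

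To upgrade to high transitivity I would induct on $k$, using that the pushforward $\nu_{\bar\omega}$ of $\mu$ under $H\mapsto H_{\omega_1,\dots,\omega_k}$ is an IRS of $G_{\omega_1,\dots,\omega_k}$: conjugation by $g\in G_{\bar\omega}$ sends $H_{\bar\omega}$ to $(gHg^{-1})_{\bar\omega}$, and $G_{\bar\omega}$ acts highly transitively and faithfully (hence finitary-free) on $\Omega\setminus\{\omega_1,\dots,\omega_k\}$. Given that a.e.\ $H$ is $k$-transitive, $(k+1)$-transitivity is equivalent to transitivity of $H_{\bar\omega}$ on the complement, so by the Transitivity Lemma it suffices to check $\nu_{\bar\omega}(\{1\})<1$. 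If instead $\nu_{\bar\omega}(\{1\})=1$ then, combined with $k$-transitivity, a.e.\ $H$ is \emph{sharply} $k$-transitive; then the unique element realising a given pair of ordered $k$-tuples defines a $G$-equivariant $(2k+1)$-ary operation $s$ sending $(\bar\omega,\bar\omega',\theta)\mapsto h\theta$. By the exchangeability principle its value is, for fixed distinct inputs, equidistributed over the infinitely many points outside the inputs, hence lies among the finitely many inputs almost surely; letting $\theta$ range over $\Omega$ this would force the bijection $h$ to map infinitely many points into a finite set — absurd. Hence $\nu_{\bar\omega}(\{1\})<1$, the induction proceeds, and a.e.\ $H$ is highly transitive.

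The step I expect to be the genuine obstacle is the heart of the Transitivity Lemma, namely turning ``$g\in K\Rightarrow g$ preserves the exchangeable set $B$ (or $\fix(K)$)'' into a \emph{zero-probability} statement: this is exactly where the absence of finitary elements is indispensable, since for permutations of finite support the preservation probability need not vanish and the conclusion genuinely fails (matching the partially finitary case and Theorem~\ref{thm-confi-SYM}). Secondary care is needed in the exchangeability upgrade from $G$ to $\Sym(\Omega)$ (continuity of the action on the space of subsets of $\Omega^m$, and measurability of the de Finetti parameters), and in verifying that the sharp-$k$-transitivity operation $s$ is genuinely non-degenerate, but these are routine once the main mechanism above is in place.
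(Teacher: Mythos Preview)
Your approach is the paper's --- de Finetti on the orbit indicator, then induction through stabilisers --- and it works, with one slip to fix. The Transitivity Lemma is false under the hypothesis $\Lambda(\{1\})<1$ (take $\Lambda=\tfrac12\delta_{\{1\}}+\tfrac12\delta_\Gamma$): your step ``$\fix(K)$ has i.i.d.\ $(1-\rho)$ membership'' needs the de Finetti mixing parameter to be deterministic, i.e.\ $\Lambda$ ergodic; without this you only get $\mathbb{P}(g\in K)\le \mathbb{P}(\fix(K)=\varnothing)$, not $0$. The correct hypothesis is $\Lambda(\{1\})=0$: then the inclusion $\{g\in K\}\subseteq\{\fix(K)=\varnothing\}$ (up to null, for any infinite-support $g$) forces $\fix(K)=\varnothing$ a.s., hence $Q=1$ a.s.\ and transitivity follows. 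Consequently in the induction you must verify $\nu_{\bar\omega}(\{1\})=0$, not merely $<1$. This is easy given your initial ergodic reduction of $\mu$: the event ``$H$ is sharply $k$-transitive'' is $G$-invariant, hence of $\mu$-measure $0$ or $1$, and by the inductive hypothesis (a.e.\ $H$ is $k$-transitive) it coincides a.s.\ with $\{H_{\bar\omega}=\{1\}\}$; ruling out $1$ gives $0$. (A minor related point: $B=K\theta_1$ is exchangeable only over $\Theta\setminus\{\theta_1\}$, since $\theta_1\in B$ always, and the equivariance is for $\Gamma_{\theta_1}$ rather than $\Gamma$; the paper makes this explicit by working on $\Omega^\ast=\Omega\setminus\{\omega\}$.)

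The one substantive difference from the paper is how you exclude sharp transitivity. The paper argues (for $k=1$, then reapplies at each inductive step): if $\mu(\{H\text{ acts freely}\})>0$, pick $g$ with $\mu(\{g\in H\}\cap\{H\text{ free}\})>0$; distinct $G_{\omega_1,\omega_2}$-conjugates of $g$ (where $g\omega_1=\omega_2$) yield pairwise disjoint events of equal positive measure, so there are only finitely many such conjugates, forcing $g$ to centralise a finite-index subgroup of the highly transitive group $G_{\omega_1,\omega_2}$, hence to be the transposition $(\omega_1\,\omega_2)$ --- contradicting the absence of finitary elements. Your $(2k+1)$-ary ``solving operator'' argument is a valid alternative that stays within the exchangeability framework and treats all $k$ uniformly; both routes terminate at the same contradiction (a nontrivial element of $G$ with finite support).
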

 
 \begin{proof}
We begin by showing that $\mu$-almost every $H$ acts transitively on $\Omega$. To show this, it is enough to show  the following claim: for every  $\omega\in \Omega$ and every probability measure $\lambda$ on $\sub(G)$ that is invariant by $G_\omega$, $\lambda$-almost every $H$ must either fix $\omega$ or act transitively on $\Omega$. Indeed assume that the claim is proven, and consider the events $E_\omega:=\{H \colon H \text{ fixes } \omega \}$ and   $E_t:=\{H\colon H \text{ acts transitively on } \Omega\}$. Clearly $E_\omega\cap E_t=\varnothing$ and the previous claim applied to $\lambda=\mu$ shows that $\mu(E_\omega \sqcup E_t)=1$ for every $\omega$. Since $\Omega$  is countable, this implies that the event $\bigcap_\omega (E_\omega \sqcup E_t)=(\bigcap_\omega E_\omega) \sqcup E_t$ has measure one. However  $\bigcap_\omega E_\omega$ is nothing but the event that $H$ acts trivially on $\Omega$, i.e. that $H$ is the trivial subgroup. Since we are assuming that $\mu(\{1\})=0$, it follows that $\mu(E_t)=1$, as desired.
	
To prove the claim, fix $\omega \in \Omega$ and a probability measure $\lambda$ on $\sub(G)$ that is invariant by $G_\omega$.  By Choquet's theorem \cite[Theorem 27.6]{Choquet},  $\lambda$ can be written as an integral  of $G_{\omega}$-invariant probability measures that are ergodic for the action of $G_{\omega}$.  Thus to prove the claim there is no loss of generaility in assuming that $\lambda$ is $G_{\omega}$-ergodic. We write $\Omega^\ast = \Omega \setminus \left\lbrace \omega \right\rbrace $, and we consider the map $\varphi\colon\sub(G) \to \left\lbrace 0,1  \right\rbrace ^{\Omega^\ast}$ which associates to a subgroup $H$ the $H$-orbit of the point $\omega$, from which we remove $\omega$. The map $\varphi$ is equivariant for the actions of $G_\omega$, and hence the push-forward $\nu$ of $\lambda$ is an invariant ergodic probability measure on $\left\lbrace 0,1  \right\rbrace ^{\Omega^\ast}$. Now since the image of $G_\omega$ inside $\Sym(\Omega^\ast)$ is dense and the action of $\Sym(\Omega^\ast)$ on $\left\lbrace 0,1  \right\rbrace ^{\Omega^\ast}$ is continuous, the measure $\nu$ is also invariant under $\Sym(\Omega^\ast)$. By de Finetti's theorem the ergodic probability measures on $\left\lbrace 0,1  \right\rbrace ^{\Omega^\ast}$ that are invariant under $\Sym(\Omega^\ast)$ are Bernoulli, so it follows that $\nu = (p\delta_0 + (1-p)\delta_1)^{\Omega^\ast}$ for some $p \in [0,1]$. Now given an infinite subset $\Sigma \subset \Omega \times \Omega$ that does not intersect the diagonal, denote by $S_\Sigma$ the set of $(x_\alpha) \in \left\lbrace 0,1  \right\rbrace ^{\Omega^\ast}$ such that $x_\alpha = x_{\alpha'}$ for all $(\alpha,\alpha') \in \Sigma$. Note that if $p \in  ]0,1[$ then $S_\Sigma$ has measure $0$. Now take a non-trivial element $g$ of $G$ such that the set $\H_g$ of subgroups $H$ of $G$ that contain $g$ has positive measure. Since $g$ moves infinitely many points in $\Omega$, one can find a sequence $(\omega_n)$ such that all the points $\omega_n$ and $g(\omega_n)$ are pairwise distinct. If we let $\Sigma$ be the set of pairs $(\omega_n,g(\omega_n))$, then by construction one has $\varphi(\H_g) \subset S_\Sigma$. Hence by the choice of $g$ it follows that $S_\Sigma$ has positive $\nu$-measure, and hence $p=0$ or $p=1$ by the above observation. This means that either $H$ fixes $\omega$ almost surely or $H$ acts transitively almost surely, and the claim is proved. As explained at the beginning of the proof, it follows that $\mu$-almost every $H$ acts transitively on $\Omega$.

 	We shall now argue that almost surely $H$ does not act freely on $\Omega$. Let $S_F$ denote the set of subgroups $H$ of $G$ such that $H$ acts freely on $\Omega$. Suppose for a contradiction that $\mu(S_F) > 0$. Then one can find a non-trivial element $g$ such that $\mu(S_F \cap \H_g) > 0$. Choose $\omega_1,\omega_2$ that are distinct and such that $g(\omega_1) = \omega_2$. Then for every $\gamma \in G$ such that $\gamma(\omega_1) = \omega_2$ and $\gamma \neq g$, then one has that $S_F \cap \H_g \cap \H_\gamma$ is empty. In particular whenever $\gamma$ is a conjugate of $g$ by an element of $G_{\omega_1,\omega_2}$ and $\gamma$ is distinct from $g$, then the sets $S_F \cap \H_g$ and $S_F \cap \H_\gamma$ have the same (positive) measure and do not intersect each other. This implies that the set of conjugates of $g$  by $G_{\omega_1,\omega_2}$ is finite, i.e. $g$ centralizes a finite index subgroup of $G_{\omega_1,\omega_2}$. Since this subgroup acts highly transitively on the complement of $\left\lbrace \omega_1,\omega_2\right\rbrace $ in $\Omega$, this is possible only if $g$ is the transposition that exchanges $\omega_1,\omega_2$. This is absurd since we assume that all non-trivial elements of $G$ have infinite support. So almost surely $H$ does not act freely on $\Omega$.
 	
 	Now given $k \geq 1$, we shall argue by induction that almost surely the action of $H$ on $\Omega$ is $k$-transitive and the fixator in $H$ of $k$ distinct elements of $\Omega$ is non-trivial. We have already treated the case $k=1$. Suppose the result holds for $k$, and choose a subset $\Sigma$ in $\Omega$ of cardinality $k$. The IRS induced on the fixator $G_\Sigma$ by the map $H \mapsto H \cap G_\Sigma$ does not charge the identity by assumption, and hence we can apply the case $k=1$ to the action of $G_\Sigma$  on the complement of $\Sigma$ in $\Omega$. Combined with the induction hypothesis this implies that almost surely the action of $H$ on $\Omega$ is $k+1$-transitive and the fixator in $H$ of $k+1$ distinct elements of $\Omega$ is non-trivial. This terminates the induction, and it immediately follows that almost surely the action of $H$ on $\Omega$ is highly transitive.
 \end{proof}
 
\bibliographystyle{amsalpha}
\bibliography{bib-conf-sym1}

\end{document}